\newtheorem{theorem}{Theorem}[section]
\newtheorem{lemma}[theorem]{Lemma}
\newtheorem{corollary}[theorem]{Corollary}
\newtheorem{proposition}[theorem]{Proposition}
\newtheorem{remark}[theorem]{Remark}
\newcommand{\filledbox}{\leavevmode
  \hbox to.77778em{%
  \hfil\vbox to.675em{\hrule width.6em height.6em}\hfil}}
\newcommand{\Rm}{{\mathbb R}}
\newcommand{\eps}{\varepsilon}
\begin{document}
\tabulinesep=1.0mm
\title{Energy Distribution of solutions to defocusing semi-linear wave equation in two dimensional space \footnote{MSC classes: 35L05, 35L71.}}

\author{Liang Li, Ruipeng Shen and Lijuan Wei\\
Centre for Applied Mathematics\\
Tianjin University\\
Tianjin, China
}

\maketitle

\begin{abstract}
  We consider finite-energy solutions to the defocusing nonlinear wave equation in two dimensional space. We prove that almost all energy moves to the infinity at almost the light speed as time tends to infinity. In addition, the inward/outward part of energy gradually vanishes as time tends to positive/negative infinity. These behaviours resemble those of free waves. We also prove some decay estimates of the solutions if the initial data decay at a certain rate as the spatial variable tends to infinity. As an application, we prove a couple of scattering results for solutions whose initial data are in a weighted energy space. Our assumption on decay rate of initial data is weaker than previous known scattering results. 
 \end{abstract}

\section{Introduction}
In this work we consider the defocusing nonlinear wave equation in 2-dimensional space
\[
 \left\{\begin{array}{ll} \partial_t^2 u - \Delta u = - |u|^{p-1} u, & (x,t)\in \Rm^2 \times \Rm; \\
 u|_{t=0} = u_0; &\\
 \partial_t|_{t=0} = u_1 . &  \end{array}\right. (CP1)
\]
The conserved energy is defined by
\[
 E = \int_{\Rm^2} \left(\frac{1}{2}|\nabla u(x,t)|^2 + \frac{1}{2}|u_t(x,t)|^2 + \frac{1}{p+1}|u(x,t)|^{p+1} \right) dx.
\]
\subsection{Background}
Defocusing nonlinear wave equations 
\[
 \partial_t^2 u - \Delta u = - |u|^{p-1} u, \qquad (x,t)\in\Rm^d \times \Rm
\]
have been extensively studied in the past few decades. This equation is invariant under a natural rescaling. Namely, if $u$ is a solution with initial data $(u_0, u_1)$, then 
\[ 
 u_\lambda (x,t) = \lambda^{-2/(p-1)} u(x/\lambda, t/\lambda)
\]
is another solution to the same equation with initial data 
\[
 (u_{0,\lambda}, u_{1,\lambda})=\left(\lambda^{-2/(p-1)} u_0 (\cdot/\lambda), \lambda^{-2/(p-1)-1} u_1(\cdot/\lambda)\right).
\] 
A basic calculation shows that $(u_{0,\lambda}, u_{1,\lambda})$ share the same $\dot{H}^{s_p}\times \dot{H}^{s_p-1}(\Rm^d)$ norm as the original initial data if we choose $s_p = d/2-2/(p-1)$. Thus the space $\dot{H}^{s_p}\times \dot{H}^{s_p-1}(\Rm^d)$ is usually called the critical Sobolev space of this equation. 

\paragraph{Local theory} The existence and uniqueness of solutions to semi-linear wave equation like (CP1) follows a combination of suitable Strichartz estimates and a fixed-point argument. Readers may refer to, for example, Kapitanski \cite{loc1} and Lindblad-Sogge \cite{ls} for more details. In this work we mainly consider finite-energy solutions in the 2-dimensional case. The global well-posedness of these solutions has been proved in Gibibre-Velo \cite{globalwell}. Therefore we focus on global, especially asymptotic behaviours of solutions to (CP1). We first give a brief review on previously known results concerning the global behaviour of solutions to defocusing semi-linear wave equations, both in higher dimensions $d\geq 3$ and dimension 2.

\paragraph{Previous results in higher dimensions} The global behaviour of solutions with energy critical nonlinearity $p_e=1+4/(d-2)$ in 3 or higher dimensional space  has been well understood. It was proved in the last few decades of 20th century that any solution with a finite energy must exist globally in time and scatter in both time directions. Please see, for instance, Bahouri-G\'{e}rard \cite{bahouri}, Bahouri-Shatah \cite{ascattering}, Ginibre-Soffer-Velo \cite{locad1}, Grillakis \cite{mg1,mg2}, Kapitanski \cite{continuousL}, Nakanishi \cite{enscatter1, enscatter2}, Pecher \cite{local1}, Shatah-Struwe \cite{ss2} and Struwe \cite{struwe}. The energy sub-critical case $p<p_e$ and energy super-critical case $p>p_e$ have also been discussed. For example, there are many conditional scattering results, in different dimensions and for different ranges of $p$, proving that if the critical Sobolev norm of a solution is uniformly bounded in the whole lifespan, then this solution must be a global solution and scatter.  Please see, for instance, Bulut \cite{ab1}, Dodson-Lawrie \cite{cubic3dwave}, Dodson et al. \cite{nonradial3p5}, Duyckaerts et al. \cite{dkm2}, Kenig-Merle \cite{km}, Killip-Visan \cite{kv2, kv3}, Rodriguez \cite{sub45} and Shen \cite{shen2}. There are also scattering results only depending on the information about initial data. For example, Ginibre-Velo \cite{conformal2} proved the scattering of solutions by conformal conservation law in the energy sub-critical range if the initial is contained in a weighted Sobolev space. Recently the second author \cite{shenenergy, shen3dnonradial} introduced an inward/outward energy theory and proved the scattering of solutions for initial data in a larger weighted Sobolev space, by considering the energy distribution properties of solutions and/or weighted Morawetz estimates.

\paragraph{Previous results in dimension 2} Since the energy critical exponent is $p=+\infty$ in dimension 2, the equation (CP1) is always energy sub-critical for all $p \in (1,+\infty)$. In general, there are less results available in dimension 2. Theoretically speaking, it is more difficult to obtain scattering result in dimension 2 than higher dimensions. It is because the dispersive rate of linear wave equation depends on the dimension. It is well known that if the initial data are smooth and compactly supported, then the corresponding solution to linear homogenous wave equation satisfies $|u(x,t)| \lesssim t^{(d-1)/2}$. The decay rate is lower in dimension 2 than higher dimensions. Many previously known scattering results in dimension 2 are based on the conformal conversation laws. Please see Gibibre-Velo \cite{conformal2}, Glassey-Pecher \cite{subconformal2d}, Hidano \cite{conformal} and Wei-Yang \cite{yang2d}, for example. These works assume that the initial data satisfy 
\[
  \int_{\Rm^2} \left[(|x|^2+1) (|\nabla u_0 (x)|^2 + |u_1(x)|^2) + |u_0(x)|^{p+1} \right] dx < \infty.
 \]
In a joint work \cite{subhyper} with Staffilani, the second author applies a transformation (introduced in Tataru \cite{tataru}) between wave equation in Euclidean space and shifted wave equation in hyperbolic space and proves the scattering of solutions to the quintic equation $p=5$, if the initial data are radial and satisfy 
\begin{align*}
 &|\nabla u_0 (x)|, |u_1(x)| \lesssim (1+|x|)^{-3/2-\eps},& &|u_0(x)|\lesssim (1+|x|)^{-1/2-\eps},& &\eps>0.&
\end{align*}
Tsutaya \cite{ktsutaya} proves the same result for small but possibly non-radial initial data with a similar decay rate. 

\subsection{Motivation and main idea}

Inward/outward energy theory discusses the energy distribution properties of solutions in details and proves the scattering of solutions for initial data with a lower decay rate than previously known results, in 3 or higher dimensional space. In this work we consider how to use a similar idea to investigate global behaviour of solutions in dimension 2. We start by introducing a few notations for convenience. These notation will be used throughout this work.

\paragraph{Notations} In this work we will use the notation $e(x,t)$ for the energy density
\[
 e(x,t) = \frac{1}{2}|\nabla u(x,t)|^2 + \frac{1}{2} |u_t(x,t)|^2 + \frac{1}{p+1}|u(x,t)|^{p+1}.
\]
We use $u_r$, $\slashed{\nabla} u$ for the derivative in the radial direction and the covariant derivative on the circle centred at the origin, respectively. 
\begin{align*}
 &u_r(x,t) = \frac{x}{|x|} \cdot \nabla u(x,t);& &\slashed{\nabla} u =  \nabla u - u_r \frac{x}{|x|};& &|\nabla u|^2 = |u_r|^2 + |\slashed{\nabla} u|^2&
\end{align*}
We also define the weighted energy 
\[
 E_\kappa(u_0,u_1) = \int_{\Rm^2} (|x|^\kappa+1)\left(\frac{1}{2}|\nabla u_0(x)|^2 + \frac{1}{2}|u_1(x)|^2 + \frac{1}{p+1}|u_0(x)|^{p+1}\right) dx.
\]
Please note that this is no longer a conserved quantity in general. 

\paragraph{Morawetz estimates} The main tool of this work is still a Morawetz-type estimate. This kind of estimates were first found by Morawetz \cite{morawetz}  for wave/Klein-Gordon equations. Lin-Strauss \cite{morawetzsch} then generalized Morawetz estimates to Schr\"{o}ndinger equations. Colliander-Keel-Staffilani-Takaoka-Tao \cite{interaction} introduced interaction Morawetz estimates for Schr\"{o}ndinger equations. Nowadays the Morawetz estimate has been one of the most important tools in the study of dispersive equations. In the case of 2D wave equation, there are also previously known Morawetz estimates 
\begin{equation}
 \int_{1}^\infty \int_{\Rm^2} \frac{(t^2+|x|^2)|\slashed{\nabla} u|^2 + |ru_t+tu_r|^2 + t^2|u|^{p+1}}{(t^2+|x|^2)^{3/2}} dx dt \lesssim C(E). \label{Nakanishi morawetz}
\end{equation}
Nakanishi \cite{enscatter1} first introduced this kind of Morawetz estimate for non-coercive energy critical wave equations in dimension $d\geq 3$ by using the multiplier $\frac{(-t,x)}{\sqrt{t^2+|x|^2}}$, then generalized it to Klein-Gordon equation in lower dimensions $d=1,2$ in a subsequent work \cite{morawetzNLKG}. A similar argument shows that this kind of Morawetz estimate holds for 2D wave equation as well. For the convenience of readers, we give a brief proof of \eqref{Nakanishi morawetz} in the appendix by the method of Nakanishi. 
 
\paragraph{Main idea} The global integral estimate \eqref{Nakanishi morawetz} does give some information on the asymptotic behaviour of solutions. For example, since $1/t$ is not integrable, the inequality \eqref{Nakanishi morawetz} implies that the $L^{p+1}$ norm of $u(\cdot,t)$ must tend to zero (in the average sense) as $t\rightarrow +\infty$. In order to gain more useful information about asymptotic behaviour, we try to use another multiplier 
\[
 \nabla \Psi = \left\{\begin{array}{ll} x, & \hbox{if} \; |x|\leq R; \\ Rx/|x|, & \hbox{if}\; |x|\geq R;\end{array}\right.
\]
i.e. the same multiplier as we used in the higher dimensional case $d\geq 3$. However, a similar argument to the higher dimensional case can neither give a global space-time integral estimate nor develop an inward/outward energy theory, due to the presence of additional terms in the right hand side:
\[
 \hbox{Positive terms} = 2E + \int_{t_1}^{t_2} \int_{|x|>R} \frac{|u|^2}{4|x|^3} dx dt + \sum_{i=1,2} \int_{|x|>R} \frac{|u(x,t_i)|^2}{8|x|^2} dx.
\]
If $t_1\leq -R$ and $t_2\geq R$, then one of the terms in the left hand side is almost $2E$. We may move this major term to the right hand and obtain
\[
 \hbox{Positive terms}  \lesssim \int_{\Rm^2} \min\{|x|/R,1\} e(x,0) dt + \int_{t_1}^{t_2} \int_{|x|>R} \frac{|u|^2}{|x|^3} dx dt + \sum_{i=1,2} \int_{|x|>R} \frac{|u(x,t_i))|^2}{|x|^2} dx.
\]
Here the first integral in the right hand side converges to zero as $R\rightarrow +\infty$. In addition, if the initial data satisfy $E_\kappa (u_0,u_1)<+\infty$ for a constant $\kappa \in (0,1)$, then this integral decays faster than $R^{-\kappa}$. The other two terms in the right hand side can be dealt with in two different ways:
\begin{itemize}
\item We observe that the left hand side contains the following terms:
\begin{align*}
 &\int_{t_1}^{t_2} \int_{|x|>R} \frac{|u|^{p+1}}{|x|} dx dt,& &\int_{|x|>R} |u|^{p+1} dx.&
\end{align*}
If $|u|\gg |x|^{-2/(p-1)}$, then we have $|u|^2/|x|^3 \ll |u|^{p+1}/|x|$ and $|u|^2/|x|^2 \ll |u|^{p+1}$. In this case the additional terms in the right hand side can be absorbed by the left hand side. On the other hand, if $|u|\lesssim |x|^{-2/(p-1)}$, then the additional terms themselves are already small if $|x|$ is large. In summary we can always remove these terms at a reasonable cost.
\item If the initial data decay sufficiently fast as the spatial variable tends to infinity, we may find a better estimate of these additional terms by finite speed of propagation and a weighted Hardy inequality. 
\end{itemize}
We will calculate as carefully as we can when we work on a Morawetz-type estimate. A few positive terms, which were simply neglected in most previous works, turn out to be very useful ones when we discuss the energy distribution of solutions.

\subsection{Main Results}
Now we give the main results of this work. The first result is about the energy distribution of solutions to (CP1).
\begin{theorem} \label{main 1}
Assume $p > 3$. Let $u$ be a solution to (CP1) with a finite energy. Then 
 \begin{itemize}
  \item[(a)] The following limits hold as time tends to infinity
   \[
     \lim_{t\rightarrow \pm \infty} \int_{|x|<|t|} \frac{|t|-|x|}{|t|} e(x,t) dx = 0.
   \]
  \item[(b)] The inward/outward part of energy vanishes as time tends positive/negative infinity. 
  \[
   \lim_{t\rightarrow \pm \infty} \int_{\Rm^2} \left(|u_r\pm u_t|^2 + |\slashed{\nabla}u|^2 + |u|^{p+1} \right) dx = 0.
  \]
  \item[(c)] Furthermore, if the initial data satisfy $E_\kappa(u_0,u_1)< +\infty$ for a constant 
  \[ 
   \kappa \in \left\{\begin{array}{ll}  (0, 1), & \hbox{if} \; p\geq 5;\\ 
    (0,\frac{p-3}{2}), & \hbox{if} \; 3<p<5; \end{array}\right. 
  \]
 then we have the following decay estimates
  \begin{align*}
  \lim_{t\rightarrow \pm \infty} \int_{|x|<|t|} \frac{|t|-|x|}{|t|^{1-\kappa}} e(x,t) dx & = 0; \\
   \lim_{t\rightarrow \pm \infty} |t|^\kappa \int_{\Rm^2} \left(|u_r\pm u_t|^2 + |\slashed{\nabla}u|^2 + |u|^{p+1} \right) dx & = 0.
  \end{align*}
 \end{itemize}
\end{theorem}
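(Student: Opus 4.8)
The whole theorem is driven by the truncated Morawetz identity flagged in Subsection~1.2, so the first step is to establish it carefully. Fixing $R>0$, taking $\Psi=\Psi_R$ with $\nabla\Psi$ as displayed there, and feeding (CP1) the multiplier $\partial_t u\,\nabla\Psi\cdot\nabla u+\tfrac12(\Delta\Psi)u\,\partial_t u$ (rescaled by $R^{-1}$ to match the normalization of the introduction), I would integrate the resulting local conservation law over a slab $[t_1,t_2]\times\Rm^2$. Using $\operatorname{Hess}\Psi=\mathrm{Id}$ on $\{|x|<R\}$, $\operatorname{Hess}\Psi=|x|^{-1}(\mathrm{Id}-\hat x\otimes\hat x)$ on $\{|x|>R\}$ (plus a surface term on $\{|x|=R\}$), $\Delta(\Delta\Psi)=|x|^{-3}>0$ on $\{|x|>R\}$, and grouping the quadratic part of the bulk integrand into the null combinations $\tfrac12(u_r\pm u_t)^2$, one arrives — after accounting for every boundary and surface term, this being the ``calculate as carefully as we can'' step — at an identity of the shape
\[
  \int_{t_1}^{t_2}\!\!\int_{\Rm^2}\mathcal P_R(x,t)\,dx\,dt+\mathcal G^{+}_R(t_2)+\mathcal G^{-}_R(t_1)
  =2E+\int_{t_1}^{t_2}\!\!\int_{|x|>R}\frac{|u|^2}{4|x|^3}\,dx\,dt+\sum_{i=1,2}\int_{|x|>R}\frac{|u(x,t_i)|^2}{8|x|^2}\,dx,
\]
valid whenever $t_1\le-R\le R\le t_2$. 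Here $\mathcal P_R\ge0$ is the coercive bulk density $\tfrac1R|\nabla u|^2\mathbf 1_{\{|x|<R\}}+\tfrac1{|x|}|\slashed{\nabla}u|^2\mathbf 1_{\{|x|>R\}}+\tfrac{p-1}{2(p+1)}\bigl(\tfrac2R\mathbf 1_{\{|x|<R\}}+\tfrac1{|x|}\mathbf 1_{\{|x|>R\}}\bigr)|u|^{p+1}$, and, up to a genuinely lower-order piece $o_R(1)$ coming from $\int\tfrac12(\Delta\Psi)u\,u_t\,dx$ (bounded in two dimensions by Hölder against the conserved $\|u\|_{L^{p+1}}$ on $\{|x|<R\}$ and by a Hardy inequality on $\{|x|>R\}$), one has the nonnegative endpoint quantities
\[
  \mathcal G^{\pm}_R(t)=\int_{|x|>R}\!\Bigl(\tfrac12(u_r\pm u_t)^2+\tfrac12|\slashed{\nabla}u|^2+\tfrac1{p+1}|u|^{p+1}\Bigr)(x,t)\,dx+\int_{|x|<R}\!\Bigl(e\pm\tfrac{|x|}{R}u_ru_t\Bigr)(x,t)\,dx\ge0,
\]
so that $\mathcal G^{+}_R(t_2)$ captures the \emph{incoming} energy at the late time $t_2$ and $\mathcal G^{-}_R(t_1)$ the \emph{outgoing} energy at the early time $t_1$: precisely the ``wrong-way'' energies whose vanishing is claimed in part~(b).

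The second step is to extract the key inequality. The principal term on the left-hand side (the $\{|x|<R\}$-part of the coercive integral) obeys the \emph{a priori} lower bound $2E-\eta(R)$, with $\eta(R):=\int_{\Rm^2}\min\{|x|/R,1\}e(x,0)\,dx$, by finite speed of propagation, as explained in Subsection~1.2: on $[-R,R]$ essentially all the energy travelling at speed $\le1$ stays inside $\{|x|<R\}$. Moving that term to the right-hand side, keeping $\mathcal P_R^{>R}:=\int_{t_1}^{t_2}\!\int_{|x|>R}\bigl(\tfrac1{|x|}|\slashed{\nabla}u|^2+\tfrac{p-1}{2(p+1)}\tfrac1{|x|}|u|^{p+1}\bigr)$ on the left, and then invoking the dichotomy of Subsection~1.2 to treat the two error integrals — on $\{|u|>|x|^{-2/(p-1)}\}$ one has $|u|^2/|x|^3<|u|^{p+1}/|x|$ and $|u|^2/|x|^2<|u|^{p+1}$, reabsorbed into $\mathcal P_R^{>R}$ and into the $\int_{|x|>R}|u|^{p+1}(t_i)$ pieces of $\mathcal G^{\pm}_R(t_i)$, which is exactly where the hypothesis $p>3$ (equivalently $\tfrac{p-1}{2(p+1)}>\tfrac14$) is used, while on the complement the integrands are $\lesssim|x|^{-3-4/(p-1)}$ and $\lesssim|x|^{-2-4/(p-1)}$ — one obtains, for all admissible $R,t_1,t_2$,
\[
  \mathcal G^{+}_R(t_2)+\mathcal G^{-}_R(t_1)\lesssim\eta(R)+(t_2-t_1)R^{-1-4/(p-1)}+R^{-4/(p-1)}.
\]

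Part~(b) then follows from the coercivity of $\mathcal G^{\pm}_R(t)$ over $\mathcal F_\pm(t):=\int_{\Rm^2}\bigl(|u_r\pm u_t|^2+|\slashed{\nabla}u|^2+|u|^{p+1}\bigr)(x,t)\,dx$: on $\{|x|>R\}$ this is immediate, and on $\{|x|<R\}$ it rests on the elementary inequality $e\pm\tfrac{|x|}{R}u_ru_t\ge\tfrac14(u_r\pm u_t)^2+\tfrac12|\slashed{\nabla}u|^2+\tfrac1{p+1}|u|^{p+1}$ (valid because $|\,|x|/R-\tfrac12\,|\le\tfrac12$). Hence $\mathcal F_+(t_2)\lesssim\mathcal G^{+}_R(t_2)\lesssim\eta(R)+(t_2-t_1)R^{-1-4/(p-1)}+R^{-4/(p-1)}$ for all $t_1\le-R\le R\le t_2$; given $t>0$, taking $t_1=-t$ and $R=t$ makes the right-hand side $\eta(t)+O(t^{-4/(p-1)})\rightarrow0$, so $\mathcal F_+(t)\rightarrow0$ as $t\rightarrow+\infty$, and symmetrically $\mathcal F_-(t)\rightarrow0$ as $t\rightarrow-\infty$. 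For part~(a) I would write $\int_{|x|<|t|}\tfrac{|t|-|x|}{|t|}e(x,t)\,dx=\int_{|x|<|t|}e(x,t)\,dx-\tfrac1{|t|}\int_{|x|<|t|}|x|\,e(x,t)\,dx$; the first integral is monotone (for $t>0$ its derivative is the nonnegative flux $\oint_{|x|=t}\bigl(\tfrac12(u_t+u_r)^2+\tfrac12|\slashed{\nabla}u|^2+\tfrac1{p+1}|u|^{p+1}\bigr)\,dS$) and bounded by $E$, hence convergent, while the difference of the two integrals — the deficit measuring the failure of the interior energy to concentrate on the cone $\{|x|=|t|\}$ — is forced to $0$ by part~(b) together with the bounds on $|ru_t+tu_r|^2/(t^2+|x|^2)^{3/2}$ and $t^2|u|^{p+1}/(t^2+|x|^2)^{3/2}$ supplied by \eqref{Nakanishi morawetz}.

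Part~(c) is obtained by rerunning the entire scheme with the weight $1+|x|^\kappa$, i.e.\ with a weighted multiplier and the corresponding weighted analogue of the identity above: finite speed of propagation gives $\int_{\Rm^2}(1+|x|^\kappa)e(x,t)\,dx\lesssim(1+|t|)^\kappa E_\kappa(u_0,u_1)$, one has $\eta(R)=o(R^{-\kappa})$ precisely when $\kappa<1$, and the weighted error integrals are controlled using finite speed of propagation together with the two-dimensional weighted Hardy inequality $\int_{\Rm^2}|x|^{\kappa-2}u^2\,dx\lesssim_\kappa\int_{\Rm^2}|x|^\kappa|\nabla u|^2\,dx$; balancing $R$ against $|t|$ and tracking the exponents yields $|t|^\kappa\mathcal F_\pm(t)\rightarrow0$ and the weighted form of (a), the stated constraint on $\kappa$ being exactly the range in which this exponent count closes. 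The hard part throughout is the bookkeeping behind the displayed identity — making sure the surface terms on $\{|x|=R\}$ and the two-dimensionally Hardy-critical lower-order term $\int(\Delta\Psi)u\,u_t$ are genuinely harmless, that the principal term really admits the lower bound $2E-\eta(R)$, and that $\mathcal G^{\pm}_R$ genuinely dominates $\mathcal F_\mp$ — together with, in part~(c), the extra care needed to propagate the weighted energy and to verify that every weighted error term decays like $|t|^{-\kappa}$ up to the threshold $\kappa=(p-3)/2$.
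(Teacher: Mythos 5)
Your scheme works for the conformal and super-conformal range $p\geq 5$ and is essentially the paper's argument there: truncated multiplier, lower bound $2E-\eta(R)$ on the principal interior term via finite speed of propagation, dichotomy on $\{|u|\gtrless|x|^{-2/(p-1)}\}$ for the exterior error terms, and $R=|t|$ at the end. But there is a genuine gap in the sub-conformal range $3<p<5$, and it sits exactly at your ``a priori lower bound $2E-\eta(R)$'' step. With the correct multiplier $u_t\bigl(\nabla\Psi\cdot\nabla u+(\tfrac12\Delta\Psi-\varphi)u\bigr)$ (you need the extra $-\varphi u u_t$ piece, $\varphi=\tfrac12\mathbf{1}_{\{|x|<R\}}$, or else the interior bulk term contains no $|u_t|^2$ at all), the interior bulk density is $\tfrac1{2R}\bigl(|\nabla u|^2+|u_t|^2+\tfrac{p-3}{p+1}|u|^{p+1}\bigr)$, not the $\tfrac{p-1}{(p+1)R}|u|^{p+1}$ you wrote. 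Since $\tfrac{p-3}{p+1}<\tfrac{2}{p+1}$ precisely when $p<5$, this density does \emph{not} dominate $\tfrac1R\,e(x,t)$, the lower bound $2E-\eta(R)$ fails, and an uncontrolled deficit $\tfrac{5-p}{2(p+1)R}\int_{-R}^{R}\int_{|x|<R}|u|^{p+1}\,dx\,dt$ appears on the right-hand side. This deficit is of the same order as the quantities you are trying to estimate, so it cannot be absorbed by the exterior dichotomy. Relatedly, you misattribute the role of the hypothesis $p>3$: the exterior absorption works for any $p>1$ (on the large-$|u|$ region either the difference is nonpositive or $|u|\lesssim|x|^{-2/(p-1)}$, for any positive residual coefficient). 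The paper uses $p>3$ elsewhere: it sets $Q(t)$ to be the sum of the quantities in (a) and (b), derives the recurrence $Q(t)\leq\tfrac{\lambda}{t}\int_0^tQ(t')\,dt'+o(1)$ with $\lambda\approx\tfrac{5-p}{2}<1$ iff $p>3$, and concludes $\limsup Q\leq\lambda\limsup Q=0$; the weighted version of this self-improvement (requiring $\tfrac{\lambda}{1-\kappa}<1$) is also what produces the threshold $\kappa<\tfrac{p-3}{2}$ in part (c), which your exponent count cannot reach. Your proposal contains no substitute for this bootstrap.

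Two smaller divergences. For part (a) the paper does not need your monotonicity-plus-Nakanishi detour: the endpoint term $M_5$ of the same Morawetz inequality already contains $\tfrac{R^2-|x|^2}{2R^2}|u_r|^2$ together with the square of $\tfrac{|x|}{R}u_r+\tfrac{u}{2R}\pm u_t$, $|\slashed{\nabla}u|^2$ and $|u|^{p+1}$, which dominate $\tfrac{t-|x|}{t}e(x,t)$ pointwise; your route would at best control the weight $(\tfrac{t-|x|}{t})^2$ in a time-averaged sense. For part (c) the paper does \emph{not} rerun the identity with a weighted multiplier: it keeps the same inequality, uses $\eta(t)\leq t^{-\kappa}E_\kappa$, and (for $\kappa\geq\tfrac{4}{p-1}$) estimates the two exterior $|u|^2$ error integrals by the weighted Hardy inequality combined with finite speed of propagation applied to $a(|x|)=(t^{-\kappa}-|x|^{-\kappa})(|x|-|t'|)^\kappa$; you would need to supply those two lemmata to make your sketch close.
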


\noindent As an application of our theory on energy distribution, we also have the following scattering results. 

\begin{theorem} \label{main 3} 
 Assume $p>5$. Let $u$ be a solution to (CP1) whose initial data satisfy $E_\kappa (u_0,u_1) < +\infty$ for a constant $\kappa > \frac{3p+5}{4p}$. Then the solution $u$ scatters in both two time directions. More precisely, there exist $(u_0^\pm ,u_1^\pm)\in (\dot{H}^1 \cap \dot{H}^{s_p}(\Rm^2)) \times (L^2 \cap \dot{H}^{s_p-1}(\Rm^2))$, so that 
 \[
  \lim_{t\rightarrow \pm \infty} \left\|\begin{pmatrix} u(\cdot,t)\\ u_t(\cdot,t) \end{pmatrix} - \mathbf{S}_L(t) \begin{pmatrix} u_0^\pm\\ u_1^\pm\end{pmatrix}\right\|_{\dot{H}^s \times \dot{H}^{s-1}(\Rm^2)} = 0. 
 \]
 holds for all $s\in [s_p,1]$. Here $\mathbf{S}_L$ is the linear wave propagation operator. 
\end{theorem}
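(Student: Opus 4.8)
The plan is to reduce the scattering statement to an integrated-in-time smallness of the nonlinear term $|u|^{p-1}u$ near infinity, with Theorem~\ref{main 1}(c) as the main input; by the time-reversal symmetry of (CP1) we only treat $t\to+\infty$ and construct $(u_0^+,u_1^+)$. Two preliminary reductions: first, the hypotheses already place the data in the stated space, since finite energy gives $(u_0,u_1)\in\dot H^1\times L^2$ with $u_0\in L^{p+1}$, while $E_\kappa(u_0,u_1)<+\infty$ means $|x|^{\kappa/2}\nabla u_0,\,|x|^{\kappa/2}u_1\in L^2(\Rm^2)$, and the weighted Hardy inequality $\||\nabla|^{-\kappa/2}g\|_{L^2(\Rm^2)}\lesssim\||x|^{\kappa/2}g\|_{L^2(\Rm^2)}$ (valid because $0<\kappa/2<1$) places $\nabla u_0,u_1\in\dot H^{-\kappa/2}$; interpolating with $L^2$ and using $\kappa>\tfrac{3p+5}{4p}\ge\tfrac4{p-1}$ gives $\nabla u_0,u_1\in\dot H^{s_p-1}$, i.e. $u_0\in\dot H^{s_p}$ and $u_1\in\dot H^{s_p-1}$. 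Second, by interpolation against the conserved energy it suffices to produce $(u_0^+,u_1^+)$ and establish convergence in the two endpoint spaces $\dot H^1\times L^2$ and $\dot H^{s_p}\times\dot H^{s_p-1}$; the intermediate $s\in(s_p,1)$ follow automatically.

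\emph{The engine, and the main obstacle.} The crux is to upgrade Theorem~\ref{main 1}(c) to a pointwise decay estimate
\[
 \sup_{x\in\Rm^2}|u(x,t)|\lesssim|t|^{-\gamma},\qquad|t|\ge1,
\]
for an exponent $\gamma=\gamma(p,\kappa)$ with $\gamma>\tfrac2{p-1}$. The input from Theorem~\ref{main 1}(c) is that, as $t\to+\infty$, the solution becomes asymptotically outgoing and concentrated in a shell of relative width $o(1)$ about the light cone, quantitatively
\[
 \int_{\Rm^2}\!\bigl(|u_r+u_t|^2+|\slashed{\nabla}u|^2+|u|^{p+1}\bigr)\,dx=o(t^{-\kappa}),\qquad\int_{|x|<t}(t-|x|)\,e(x,t)\,dx=o(t^{1-\kappa}).
\]
One splits the data at a large radius $R$: by finite speed of propagation the contribution of the data outside $B_R$ has energy $\lesssim_E R^{-\kappa}$, and for the remaining piece, supported in $\{|x|\le t+R\}$, one writes $u(x,t)=-\int_{|x|}^{t+R}u_r(\rho x/|x|,t)\,d\rho$ and estimates by Cauchy--Schwarz, using the first display to trade $u_r$ against $-u_t$ and to exploit near-radiality, the second display to localize the principal part of $\nabla u$ to the shell, and a weighted Hardy inequality for the outside piece. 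Tracking the exponents in this argument is exactly what produces the threshold: $\gamma>\tfrac2{p-1}$ holds precisely when $\kappa>\tfrac{3p+5}{4p}$, and $\tfrac2{p-1}$ lies below the best possible dispersive rate (morally $t^{-1/2}$ in dimension two) exactly when $p>5$. This is the hard step: in two dimensions the free dispersion is only $t^{-1/2}$ and the weighted hypothesis with $\kappa<1$ gives no pointwise decay of the initial data, so a crude Gagliardo--Nirenberg interpolation between the decaying $L^{p+1}$-norm and the merely bounded energy loses far too much; one genuinely needs the outgoing-plus-cone-concentration geometry of Theorem~\ref{main 1}(c), finite speed of propagation, and a weighted Hardy inequality together.

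\emph{From pointwise decay to scattering.} Granting the pointwise bound, one estimates $\||u(t)|^{p-1}u(t)\|_{L^2_x}=\bigl(\int_{\Rm^2}|u|^{2p}\,dx\bigr)^{1/2}$ by splitting $\Rm^2$ into $\{|x|>2t\}$ and $\{|x|\le2t\}$. On $\{|x|>2t\}$,
\[
 \int_{|x|>2t}\!|u|^{2p}\,dx\le\|u(t)\|_{L^\infty_x}^{p-1}\,\|u(t)\|_{L^{p+1}_x}^{p+1}=o\bigl(t^{-\gamma(p-1)-\kappa}\bigr)
\]
by the pointwise bound and Theorem~\ref{main 1}(c), and its square root is integrable in $t$. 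On $\{|x|\le2t\}$ one has $t^2(t^2+|x|^2)^{-3/2}\gtrsim t^{-1}$, so, writing $|u|^{2p}=(t|u|^{p-1})\cdot(t^{-1}|u|^{p+1})$ and invoking the conformal-type Morawetz estimate \eqref{Nakanishi morawetz} to control $\int_T^\infty\!\int_{|x|\le2t}t^{-1}|u|^{p+1}\,dx\,dt\lesssim_E 1$, Cauchy--Schwarz in $t$ gives
\[
 \int_T^\infty\!\Bigl(\int_{|x|\le2t}\!|u|^{2p}\,dx\Bigr)^{1/2}\!dt\le\Bigl(\int_T^\infty\!t\sup_x|u(x,t)|^{p-1}\,dt\Bigr)^{1/2}\Bigl(\int_T^\infty\!\int_{|x|\le2t}\frac{|u|^{p+1}}{t}\,dx\,dt\Bigr)^{1/2},
\]
in which the first factor tends to $0$ as $T\to+\infty$ because $\gamma(p-1)>2$. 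Hence $\|\,|u|^{p-1}u\,\|_{L^1_tL^2_x([T,\infty)\times\Rm^2)}\to0$, and the energy (Duhamel) estimate shows that $\mathbf{S}_L(-t)\binom{u(\cdot,t)}{u_t(\cdot,t)}$ is Cauchy in $\dot H^1\times L^2$; let $(u_0^+,u_1^+)$ be its limit. The endpoint $s=s_p$ is handled identically, with $L^2_x$ replaced by $L^{2(p-1)/(p+1)}_x\hookrightarrow\dot H^{s_p-1}$ (Sobolev embedding): the same Morawetz-trick computation, now using $\kappa>\tfrac4{p-1}$, yields $\int_T^\infty\||u(t)|^{p-1}u(t)\|_{\dot H^{s_p-1}_x}\,dt<\infty$ with vanishing tail, and together with the propagation of $\dot H^{s_p}$-regularity on bounded time intervals (local theory of \cite{loc1,ls}) this shows $\mathbf{S}_L(-t)\binom{u(\cdot,t)}{u_t(\cdot,t)}$ is Cauchy in $\dot H^{s_p}\times\dot H^{s_p-1}$ with the same limit. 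Interpolating the two endpoints against the energy bound yields convergence in $\dot H^s\times\dot H^{s-1}$ for every $s\in[s_p,1]$, completing the proof.
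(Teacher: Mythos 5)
There is a genuine gap at the heart of your argument. Your ``engine'' is the pointwise decay estimate $\sup_{x}|u(x,t)|\lesssim |t|^{-\gamma}$ with $\gamma>\tfrac{2}{p-1}$, but you never actually derive it, and the sketch you give cannot work for the non-radial solutions that Theorem \ref{main 3} covers. Writing $u(x,t)=-\int_{|x|}^{t+R}u_r(\rho x/|x|,t)\,d\rho$ and applying Cauchy--Schwarz controls $\int_{|x|}^{\infty}\rho\,|u_r|^2\,d\rho$ only after integrating over the angular variable; pointwise in the angle this quantity is not bounded by the energy, which is exactly why Lemma \ref{pointwise estimate} in the paper is stated for radial functions only. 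More fundamentally, $\dot{H}^1\cap L^{p+1}(\Rm^2)$ does not embed into $L^\infty(\Rm^2)$, so no combination of the energy, the weighted energy, Theorem \ref{main 1}(c) and the Morawetz estimate \eqref{Nakanishi morawetz} --- all of which are integral, energy-type quantities --- can produce a uniform pointwise bound on a general finite-energy solution. Your claim that ``tracking the exponents'' in this step produces the threshold $\kappa>\tfrac{3p+5}{4p}$ is asserted without computation, and since the step itself is unavailable, the threshold is not accounted for. The second half of your argument (splitting $\{|x|>2t\}$ against $\{|x|\le 2t\}$, Cauchy--Schwarz in $t$ against the Morawetz integral, then Duhamel) is internally consistent, but it is conditional on the missing $L^\infty$ bound.

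The paper avoids pointwise decay entirely. From Theorem \ref{main 1}(c) it extracts only $\int_{\Rm^2}|u(x,t)|^{p+1}dx\lesssim \max\{1,|t|^{-\kappa}\}$, hence $u\in L^{q}_tL^{p+1}_x$ for every $q>(p+1)/\kappa$; the hypothesis $\kappa>\tfrac{3p+5}{4p}$ is precisely what admits the exponent $q=\tfrac{4p(p+1)}{3p+5}$, for which $\left\||u|^{p-1}u\right\|_{L^{4(p+1)/(3p+5)}_tL^{(p+1)/p}_x}=\|u\|^p_{L^{4p(p+1)/(3p+5)}_tL^{p+1}_x}$ is a dual Strichartz norm at regularity $s'=\tfrac{p+7}{4(p+1)}\in(\tfrac14,\tfrac12)$. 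This gives a Cauchy property in $\dot{H}^{s'}\times\dot{H}^{s'-1}$, which is interpolated against the uniform $\dot{H}^1\times L^2$ bound to reach $s_p$; membership of the data in $\dot{H}^{s_p}\times\dot{H}^{s_p-1}$ is obtained by Sobolev embedding and H\"older against the weight (your weighted-Hardy route to this particular point is an acceptable alternative). The energy-space scattering is then \emph{not} proved by an $L^1_tL^2_x$ bound on the nonlinearity, but by first invoking Proposition \ref{scattering criterion} to get $u\in L^{\frac32(p-1)}_{t,x}$ and then running a fractional chain rule bootstrap on $\|D^{1/2}u\|_{L^6L^6}$. If you want to salvage your outline, you would need to replace the pointwise decay step by an argument at the level of space-time norms, which is essentially what the paper does.
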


\begin{theorem} \label{main 2}
Assume $p > 1+2\sqrt{3} \approx 4.464$. Let $u$ be a radial solution to (CP1) with a finite energy. Then 
 \begin{itemize}
  \item[(a)] There exist two radial finite-energy free waves $\tilde{u}^+, \tilde{u}^-$, so that 
 \[
  \lim_{t\rightarrow \pm \infty} \int_{|x|>|t|-\eta} \left(|\nabla \tilde{u}^\pm (x,t)-\nabla u(x,t)|^2+|\tilde{u}_t^\pm (x,t)-u_t(x,t)|^2\right) dx = 0, \quad \forall \eta\in \Rm. 
 \]
 \item[(b)] If the initial data $(u_0,u_1)$ satisfy $E_{\frac{4}{p+1}}(u_0,u_1) < +\infty$, then the solution $u$ scatters in both two time directions. 
  \[
  \lim_{t\rightarrow \pm \infty} \|(\tilde{u}^\pm(\cdot,t)-u(\cdot,t), \tilde{u}_t^\pm (\cdot,t)-u_t(\cdot,t))\|_{\dot{H}^1\times L^2(\Rm^2)} = 0.
 \]
 \end{itemize}
\end{theorem}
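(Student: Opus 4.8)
The plan is to construct $\tilde u^{\pm}$ from the exterior part of the nonlinearity, using finite speed of propagation to localize everything outside the light cone; for part~(b) the extra weighted hypothesis supplies, through Theorem~\ref{main 1}(c), a polynomial decay rate for the potential energy that lets us prove genuine scattering and identify the limit with $\tilde u^{\pm}$.

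For part~(a) fix $\eta\in\Rm$ and split the nonlinearity $F(s)=|u(\cdot,s)|^{p-1}u(\cdot,s)=\tilde F(s)+\check F(s)$ with $\tilde F(s)=F(s)\,\mathbf 1_{\{|x|>s-\eta\}}$. By finite speed of propagation the contribution of $\check F(s)$ to Duhamel's formula is supported in $\{|x|<t-\eta\}$, so on the set $\{|x|>t-\eta\}$ we have $u(\cdot,t)=\mathbf S_L(t)(u_0,u_1)-\int_0^t\mathbf S_L(t-s)(0,\tilde F(s))\,ds$. Define $\tilde u^{+}$ to be the free wave with Cauchy data $(u_0,u_1)-\int_0^{\infty}\mathbf S_L(-s)(0,\tilde F(s))\,ds$ at $t=0$; then on $\{|x|>t-\eta\}$ one gets $u(\cdot,t)-\tilde u^{+}(\cdot,t)=\int_t^{\infty}\mathbf S_L(t-s)(0,\tilde F(s))\,ds$, whose $\dot H^1\times L^2$ norm --- hence also its exterior energy --- is $\lesssim\int_t^{\infty}\|\tilde F(s)\|_{L^2(\Rm^2)}\,ds$ by the elementary energy estimate. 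Everything thus reduces to
\[
 \int_T^{\infty}\|\tilde F(s)\|_{L^2(\Rm^2)}\,ds\longrightarrow0\qquad(T\to+\infty),
\]
which also makes the Cauchy data of $\tilde u^{+}$ well defined. I would prove this via the radial Sobolev inequality: on $\{|x|>s-\eta\}$, where $|x|\gtrsim s$ once $s>\eta$, it gives $|u(x,s)|\lesssim|x|^{-\alpha}\mu(s)$ with $\mu(s)$ a fixed positive power of $\int_{\Rm^2}|u(x,s)|^{p+1}\,dx$ and $\alpha>0$ depending on a free interpolation exponent; since $\mu(s)\to0$ by Theorem~\ref{main 1}(b) and, provided $p$ is large enough, the exponent can be chosen so that $p\alpha>2$, one obtains $\|\tilde F(s)\|_{L^2(\Rm^2)}\lesssim\mu(s)^{p}\,s^{\,1-p\alpha}$ with integrable, vanishing tail --- and it is exactly this requirement that forces a lower bound on $p$. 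Finally the construction yields such a $\tilde u^{+}$ for every $\eta$; two finite-energy free waves whose difference $z$ (again a free wave) has exterior energy on $\{|x|>|t|-\eta\}$ tending to $0$ for all $\eta$ must coincide, since then $\int_{\Rm^2}e_z(x,t)\,dx=\lim_{t\to+\infty}\int_{|x|<|t|-\eta}e_z(x,t)\,dx$ for every $\eta$ and the right side tends to $0$ as $\eta\to+\infty$ because the energy of a finite-energy free wave concentrates near its light cone. Hence $\tilde u^{+}$ is independent of $\eta$, and $t\to-\infty$ is symmetric.

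For part~(b) the hypothesis $E_{4/(p+1)}(u_0,u_1)<+\infty$ enters via Theorem~\ref{main 1}(c): the threshold $p>1+2\sqrt3$ is exactly what places $\kappa:=\tfrac{4}{p+1}$ in the admissible range, since for $3<p<5$ one needs $\tfrac{4}{p+1}<\tfrac{p-3}{2}$, i.e. $p^2-2p-11>0$, i.e. $p>1+2\sqrt3$, while for $p\ge5$ one only needs $\kappa<1$. Thus, using $\slashed{\nabla}u\equiv0$,
\[
 |t|^{4/(p+1)}\int_{\Rm^2}\big(|u_r\pm u_t|^2+|u|^{p+1}\big)\,dx\longrightarrow0,\qquad\text{so}\qquad\|u(\cdot,t)\|_{L^{p+1}(\Rm^2)}^{p+1}=o\!\left(|t|^{-4/(p+1)}\right).
\]
Interpolating this polynomial decay against the uniform $\dot H^1$ and $L^{p+1}$ bounds (two-dimensional Gagliardo--Nirenberg) controls a scaling-critical space--time norm of the nonlinearity on $[T,\infty)$ and shows it tends to $0$; feeding this into the standard Strichartz/Duhamel perturbation scheme gives scattering of $u$ in $\dot H^{s_p}\times\dot H^{s_p-1}$, upgraded to scattering in $\dot H^1\times L^2$ by conservation of energy (persistence of regularity). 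So $u$ scatters to some finite-energy free wave $\bar u^{+}$; but $\dot H^1\times L^2$ scattering makes the exterior energy of $u-\bar u^{+}$ on $\{|x|>|t|-\eta\}$ vanish for every $\eta$, as does that of $u-\tilde u^{+}$ by part~(a), so $\bar u^{+}=\tilde u^{+}$ by the uniqueness observation above, and $u$ scatters to $\tilde u^{\pm}$.

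The main obstacle is quantitative: Theorem~\ref{main 1} is purely qualitative, so one must extract from it time-integrable control of the nonlinearity strong enough to run both the Duhamel construction of $\tilde u^{\pm}$ in part~(a) and the perturbative scattering argument in part~(b). In two dimensions this is delicate --- weak dispersion, no energy--$L^{\infty}$ embedding, reliance on the comparatively lossy radial Sobolev inequality --- and the bookkeeping of all these exponents, together with the admissible range of $\kappa$ in Theorem~\ref{main 1}(c), is what forces $p>1+2\sqrt3$. Keeping track of the small factors that Theorem~\ref{main 1} supplies while carrying these estimates out is the technical core.
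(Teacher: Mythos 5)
Your proposal has genuine gaps in both parts, and the quantitative obstacle you flag at the end is in fact fatal to the route you chose. For part (a), the entire construction rests on $\int_T^\infty\|\,|u(\cdot,s)|^p\mathbf{1}_{\{|x|>s-\eta\}}\|_{L^2(\Rm^2)}\,ds\to 0$, but this is not attainable with the tools available. The only pointwise decay at hand is Lemma \ref{pointwise estimate}, which gives $|u(r,s)|\lesssim_E r^{-2/(p+3)}$; the exponent $\alpha=2/(p+3)$ is not a ``free interpolation exponent'' --- it is the best one can get from $\dot H^1\cap L^{p+1}$ control in two dimensions (there is no $L^2$ or Hardy bound to interpolate against). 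With this $\alpha$ one finds
\[
\big\|\,|u(\cdot,s)|^p\mathbf{1}_{\{|x|>s-\eta\}}\big\|_{L^2(\Rm^2)}^2\leq \sup_{|x|>s-\eta}|u|^{p-1}\int_{\Rm^2}|u|^{p+1}dx\lesssim_E s^{-\frac{2(p-1)}{p+3}},
\]
so $\|\tilde F(s)\|_{L^2}\lesssim s^{-(p-1)/(p+3)}$, and $(p-1)/(p+3)<1$ for \emph{every} $p$; your condition $p\alpha>2$ reads $2p/(p+3)>2$, which no finite $p$ satisfies. The wave-operator integral therefore never converges, and no lower bound on $p$ rescues it. The paper circumvents exactly this by reducing to the one-dimensional equation for $w=r^{1/2}u$ and integrating the forcing $f=\frac14 r^{-3/2}u-r^{1/2}|u|^{p-1}u$ \emph{along characteristics}, where the nonlinear part is controlled by the energy-flux identity $\int_{t_1}^{t_2}(t-\eta)|u(t-\eta,t)|^{p+1}dt\lesssim E$ via H\"older --- an a priori $L^1$-in-time bound on the light cone that does not follow from any pointwise decay. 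This yields the $L^2_{loc}$ convergence of $v_\pm$ to profiles $g_\pm$ with the quantitative rate $r_1^{-(p-3)/(p+1)}$ (Lemma \ref{L2 estimate}), and the free waves $\tilde u^\pm$ are then produced by the radiation-field theorem rather than by a Duhamel limit.

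For part (b), your identification of $\kappa=\frac{4}{p+1}$ and of $p>1+2\sqrt3$ as precisely the admissibility condition $\frac{4}{p+1}<\frac{p-3}{2}$ in Theorem \ref{main 1}(c) is correct and matches the paper. But the ensuing perturbative scattering scheme does not close: the decay $\|u(\cdot,t)\|_{L^{p+1}}^{p+1}=o(|t|^{-4/(p+1)})$ puts $u$ in $L^qL^{p+1}$ only for $q>(p+1)^2/4$, whereas the Strichartz argument (as in the proof of Theorem \ref{main 3}) requires the much smaller exponent $q=\frac{4p(p+1)}{3p+5}$; the inequality $\frac{4p(p+1)}{3p+5}>\frac{(p+1)^2}{4}$ forces $p<5/3$. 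This is exactly why Theorem \ref{main 3} assumes the far stronger weight $\kappa>\frac{3p+5}{4p}$. The paper's part (b) is not a Strichartz argument at all: it applies Lemma \ref{L2 estimate} with $r_1=t-c\,t^{(p-3)/(p+1)}$ to push the exterior scattering of part (a) slightly \emph{inside} the light cone with error $O(c)$, and then observes that on $\{|x|<t-c\,t^{(p-3)/(p+1)}\}$ one has $\frac{t-|x|}{t}\geq c\,t^{-4/(p+1)}$, so the decay estimate $t^{4/(p+1)}\int_{|x|<t}\frac{t-|x|}{t}e(x,t)dx\to0$ kills the interior energy of $u$, while Proposition \ref{hollow center} kills that of $\tilde u^\pm$; letting $c\to0^+$ finishes. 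You would need to replace your perturbative step by this energy-localization argument (or strengthen the hypothesis on $\kappa$) for part (b) to go through.
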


\begin{remark}
 We may call Part (a) of Theorem \ref{main 2} exterior scattering. The free waves $\tilde{u}^+$, $\tilde{u}_-$ are the radiation part of solutions $u$.  
\end{remark}

\begin{remark}
 The method of conformal conservation laws assumes that $E_2(u_0,u_1)<+\infty$. The initial data in works \cite{subhyper, ktsutaya} satisfy $E_{1+\eps} (u_0,u_1)<+\infty$. The decay rate of initial data in the scattering results of this work is lower than these previously known results. The initial data in Theorem \ref{main 2} are not necessary contained in the critical Sobolev space. 
\end{remark}

\paragraph{The structure of this work} This paper is organized as follows. We first give a few preliminary results and technical lemmata in Section 2. Then in Section 3 we discuss our Morawetz-type estimates in details. Section 4 is devoted to the proof of energy distribution properties. Finally we discuss scattering theory of non-radial and radial solutions in Section 5 and 6. 

\section{Preliminary Results}

\begin{lemma} \label{pointwise estimate}
 If $u \in \dot{H}^1(\Rm^2)\cap L^{p+1}(\Rm^2)$ is a radial function, then we have the pointwise estimate
 \[
  |u(r)| \lesssim_1 r^{-\frac{2}{p+3}} \|u\|_{\dot{H}^1}^{\frac{2}{p+3}} \|u\|_{L^{p+1}}^{\frac{p+1}{p+3}}, \qquad r>0.
 \]
\end{lemma}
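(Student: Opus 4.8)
The statement is a radial Gagliardo–Nirenberg / trace-type inequality, and the natural route is to integrate from infinity toward $r$ along rays. Write $u(r)^2 = -\int_r^\infty \frac{d}{ds}\bigl(u(s)^2\bigr)\,ds = -2\int_r^\infty u(s)u'(s)\,ds$, which is legitimate for $u\in\dot H^1(\Rm^2)\cap L^{p+1}(\Rm^2)$ radial since such $u$ decays to $0$ at infinity. Then bound $|u(r)|^2 \le 2\int_r^\infty |u(s)|\,|u'(s)|\,ds$ and insert weights $s$ so that $\dot H^1$ appears in its radial form: $\|u\|_{\dot H^1}^2 \simeq \int_0^\infty |u'(s)|^2 s\,ds$ and $\|u\|_{L^{p+1}}^{p+1}\simeq \int_0^\infty |u(s)|^{p+1}s\,ds$.

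**Key steps.** First I would split $|u(s)|\,|u'(s)| = \bigl(|u'(s)|\,s^{1/2}\bigr)\cdot\bigl(|u(s)|\,s^{-1/2}\bigr)$ and apply Cauchy–Schwarz on $[r,\infty)$:
\[
|u(r)|^2 \le 2\Bigl(\int_r^\infty |u'(s)|^2 s\,ds\Bigr)^{1/2}\Bigl(\int_r^\infty |u(s)|^2 s^{-1}\,ds\Bigr)^{1/2} \lesssim \|u\|_{\dot H^1}\Bigl(\int_r^\infty |u(s)|^2 s^{-1}\,ds\Bigr)^{1/2}.
\]
The second factor is not controlled by $\|u\|_{L^{p+1}}$ directly, so the second step is to interpolate: on $[r,\infty)$ one has $|u(s)|^2 s^{-1} = |u(s)|^2 s^{-1}$, and I want to trade powers of $|u|$ for the $L^{p+1}$ norm while using the (as yet unknown) pointwise bound $M(r):=\sup_{s\ge r}|u(s)|$, which is decreasing in $r$. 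Estimate
\[
\int_r^\infty |u(s)|^2 s^{-1}\,ds \le M(r)^{2 - (p+1)\theta}\int_r^\infty |u(s)|^{(p+1)\theta}s^{-1}\,ds
\]
is the wrong shape; instead the clean move is: since $s\ge r$, $s^{-1}\le r^{-2}s$, hence $\int_r^\infty |u(s)|^2 s^{-1}ds \le r^{-2}\int_r^\infty |u(s)|^2 s\,ds$, and then bound $|u(s)|^2 \le M(r)^{2-(p+1)\cdot\frac{p-1}{p+1}}\,|u(s)|^{p+1}\cdot|u(s)|^{?}$ — more transparently, just write $|u(s)|^2 \le M(r)^{\,1-\frac{p-1}{?}}\cdots$. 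The slick version: $\int_r^\infty|u(s)|^2 s^{-1}ds$; use $s^{-1}\le r^{-2}s$ only after noting $|u(s)|^2\le M(r)^{1}\cdot|u(s)|$ is too crude.

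The honest bookkeeping is: set $M=M(r)$. Then
\[
\int_r^\infty |u(s)|^2 s^{-1}\,ds \le M^{\,2-q}\,r^{-2}\int_r^\infty |u(s)|^{q} s\,ds\quad\text{with }q=p+1\text{ forces }2-q<0,
\]
so instead keep $s^{-1}$ as is and use $|u(s)|^2 s^{-1}\le M^{2-(p+1)}|u(s)|^{p+1}s^{-1}\le M^{1-p}r^{-2}|u(s)|^{p+1}s$, giving $\int_r^\infty|u|^2 s^{-1}ds \le M^{1-p}r^{-2}\|u\|_{L^{p+1}}^{p+1}$ (up to constants). Plug back:
\[
M^2 \le |u(r)|^2 \lesssim \|u\|_{\dot H^1}\,\bigl(M^{1-p}r^{-2}\|u\|_{L^{p+1}}^{p+1}\bigr)^{1/2} = \|u\|_{\dot H^1}\,r^{-1}\,M^{(1-p)/2}\,\|u\|_{L^{p+1}}^{(p+1)/2}.
\]
Since $M\ge|u(r)|$ and both sides involve $M$, solve: $M^{2-(1-p)/2}=M^{(p+3)/2}\lesssim \|u\|_{\dot H^1} r^{-1}\|u\|_{L^{p+1}}^{(p+1)/2}$, i.e. $M\lesssim r^{-2/(p+3)}\|u\|_{\dot H^1}^{2/(p+3)}\|u\|_{L^{p+1}}^{(p+1)/(p+3)}$, and $|u(r)|\le M$ gives exactly the claimed inequality.

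**Main obstacle.** The only delicate point is justifying that a radial $\dot H^1(\Rm^2)\cap L^{p+1}(\Rm^2)$ function genuinely tends to $0$ at infinity (so the fundamental theorem of calculus representation from $\infty$ is valid) and that $M(r)<\infty$; this follows from a density/approximation argument with smooth compactly supported radial functions together with the a priori estimate just derived, which controls $M(r)$ in terms of the two norms and hence is stable under the limit. I would also need to state the estimates in a weak/a.e. sense first and then upgrade, but these are standard. Everything else is elementary one-dimensional calculus with weights $s\,ds$, so I expect no real difficulty beyond that initial justification.
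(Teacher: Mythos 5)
There is a genuine gap: the interpolation step that converts $\int_r^\infty |u(s)|^2 s^{-1}\,ds$ into an $L^{p+1}$ quantity goes the wrong way. You write $|u(s)|^2 \le M^{2-(p+1)}|u(s)|^{p+1}$ with $M=\sup_{s\ge r}|u(s)|$; dividing by $|u(s)|^2$ this reads $1\le (|u(s)|/M)^{p-1}$, which fails wherever $|u(s)|<M$ --- that is, almost everywhere, since $u$ tends to $0$ at infinity. To dominate $|u|^2$ from above by a constant times $|u|^{p+1}$ with $p>1$ you need a pointwise \emph{lower} bound on $|u|$, and the supremum $M$ only provides an upper bound; the valid direction of this interpolation is $\int |u|^{p+1}\le M^{p-1}\int|u|^2$, which is useless here. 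Consequently the claimed bound $\int_r^\infty|u|^2 s^{-1}\,ds\le M^{1-p}r^{-2}\|u\|_{L^{p+1}}^{p+1}$ is false in general (a tall narrow bump near $r$ makes the factor $M^{1-p}$ small without shrinking the tail of the integral), and the self-improving inequality $M^{(p+3)/2}\lesssim r^{-1}\|u\|_{\dot H^1}\|u\|_{L^{p+1}}^{(p+1)/2}$ collapses. A secondary slip is the line $M^2\le |u(r)|^2$: since $M$ is a supremum over $s\ge r$, this is also reversed unless $|u|$ is monotone; that one could be repaired by running the argument at a near-maximizer of $|u|$ on $[r,\infty)$, but the reversed interpolation cannot.

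The paper's proof is built precisely to manufacture the missing lower bound: the radial Cauchy--Schwarz estimate $|u(r')-u(r)|\le ((r'-r)/r)^{1/2}\|u\|_{\dot H^1}$ forces $|u(r')|\ge |u(r)|/2$ on the annulus $r\le r'\le r+r|u(r)|^2/(4\|u\|_{\dot H^1}^2)$, and integrating $|u|^{p+1}$ over that annulus (of area $\gtrsim r^2|u(r)|^2/\|u\|_{\dot H^1}^2$) yields $\|u\|_{L^{p+1}}^{p+1}\gtrsim r^2|u(r)|^{p+3}/\|u\|_{\dot H^1}^2$, which is the lemma. If you want to keep your FTC-from-infinity setup, note that a direct H\"{o}lder estimate does give $\int_r^\infty|u|^2s^{-1}\,ds\lesssim_p r^{-4/(p+1)}\|u\|_{L^{p+1}}^2$, but feeding this into your first display only produces $|u(r)|\lesssim r^{-1/(p+1)}\|u\|_{\dot H^1}^{1/2}\|u\|_{L^{p+1}}^{1/2}$, a strictly slower decay rate than the $r^{-2/(p+3)}$ required by the statement.
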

\begin{proof}
We start by
\begin{align*}
  |u(r_2) - u(r_1)| & = \left|\int_{r_1}^{r_2} u_r (r) dr\right| \leq \left(\int_{r_1}^{r_2} r^{-1} dr\right)^{1/2} \left(\int_{r_1}^{r_2} r |u_r|^2 dr\right)^{1/2} \leq \left(\frac{r_2-r_1}{r_1}\right)^{1/2}  \|u\|_{\dot{H}^1}. 
 \end{align*}
Thus we have 
\[
 |u(r') - u(r)| \leq |u(r)|/2\Rightarrow |u(r')|\geq |u(r)|/2, \qquad \forall r\leq r'\leq r+ \frac{r|u(r)|^2}{4\|u\|_{\dot{H}^1}^2}.
\]
Combining this estimate with $L^{p+1}$ norm, we obtain
\[
 \|u\|_{L^{p+1}}^{p+1} \geq \int_{r<|x|<r+\frac{r|u(r)|^2}{4\|u\|_{\dot{H}^1}^2}} |u(x)|^{p+1} dx \geq 2\pi r \cdot \frac{r|u(r)|^2}{4\|u\|_{\dot{H}^1}^2} \cdot \left(\frac{|u(r)|}{2}\right)^{p+1}
\]
This immediately finishes the proof. 
\end{proof}

\begin{proposition}[Strichartz estimates, see Proposition 3.1 of Ginibre-Velo \cite{strichartz}]\label{Strichartz estimates} 
 Let $2\leq q_1,q_2 \leq \infty$, $2\leq r_1,r_2 < \infty$ and $\rho_1,\rho_2,s\in \Rm$ be constants with
 \begin{align*}
  &\frac{2}{q_i} + \frac{1}{r_i} \leq \frac{1}{2},& &i=1,2;& \\
  &\frac{1}{q_1} + \frac{2}{r_1} = 1 + \rho_1 - s;& &\frac{1}{q_2} + \frac{2}{r_2} = \rho_2 +s.&
 \end{align*}
 Assume that $u$ is the solution to the linear wave equation
\[
 \left\{\begin{array}{ll} \partial_t u - \Delta u = F(x,t), & (x,t) \in \Rm^2 \times [0,T];\\
 u|_{t=0} = u_0 \in \dot{H}^s; & \\
 \partial_t u|_{t=0} = u_1 \in \dot{H}^{s-1}. &
 \end{array}\right.
\]
Then we have
\begin{align*}
 \sup_{t\in [0,T]} \left\|\left(u(\cdot,t), \partial_t u(\cdot,t)\right)\right\|_{\dot{H}^s \times \dot{H}^{s-1}} & +\|D_x^{\rho_1} u\|_{L^{q_1} L^{r_1}([0,T]\times \Rm^2)} \\
 & \leq C\left(\left\|(u_0,u_1)\right\|_{\dot{H}^s \times \dot{H}^{s-1}} + \left\|D_x^{-\rho_2} F\right\|_{L^{\bar{q}_2} L^{\bar{r}_2} ([0,T]\times \Rm^2)}\right).
\end{align*}
Here the coefficients $\bar{q}_2$ and $\bar{r}_2$ satisfy $1/q_2 + 1/\bar{q}_2 = 1$, $1/r_2 + 1/\bar{r}_2 = 1$. The fractional differential operator $D_x^\rho$ is defined by the Fourier multiplier $|\xi|^\rho$. The constant $C$ does not depend on $T$ or $u$. 
\end{proposition}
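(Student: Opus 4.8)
The plan is to reduce everything to the half-wave group $U(t)=e^{it\sqrt{-\Delta}}$ and then run the classical dispersive-estimate/$TT^*$ scheme. By Duhamel's formula the solution is
\[
 u(t)=\cos(t\sqrt{-\Delta})u_0+\frac{\sin(t\sqrt{-\Delta})}{\sqrt{-\Delta}}u_1+\int_0^t \frac{\sin((t-\tau)\sqrt{-\Delta})}{\sqrt{-\Delta}}F(\tau)\,d\tau,
\]
and every operator here is a linear combination of $U(\pm t)$ composed with at most one factor of $|D|^{-1}$. The contribution $\sup_t\|(u,u_t)\|_{\dot H^s\times\dot H^{s-1}}$ on the left is immediate from Plancherel for the two homogeneous pieces, so the whole statement reduces to: (i) the homogeneous bound $\||D|^{\rho_1}U(t)f\|_{L^{q_1}_tL^{r_1}_x}\lesssim\|f\|_{L^2}$ for admissible exponents, (ii) its adjoint, and (iii) the retarded estimate obtained from (i) and (ii); the $\sup_t$ bound for the Duhamel term will fall out of (ii).

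For (i) I would dyadically localize in frequency. For $\widehat f$ supported in $\{|\xi|\sim N\}$, stationary phase on the kernel $\int e^{i(x\cdot\xi+t|\xi|)}\chi(\xi/N)\,d\xi$ --- the phase being nondegenerate in the one direction tangent to the circle $|\xi|=N$ --- gives the dispersive bound $\|U(t)P_Nf\|_{L^\infty_x(\Rm^2)}\lesssim N^{3/2}|t|^{-1/2}\|f\|_{L^1_x}$, and interpolation with the $L^2$ isometry yields $\|U(t)P_Nf\|_{L^{r_1}_x}\lesssim N^{3(\frac12-\frac1{r_1})}|t|^{-(\frac12-\frac1{r_1})}\|f\|_{L^{r_1'}_x}$. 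The $TT^*$ identity converts the target into a bound for $\int U(t-\tau)P_N^2g(\tau)\,d\tau$, which follows from Minkowski in $x$, this decay estimate, and the one-dimensional Hardy--Littlewood--Sobolev inequality in $t$; the latter closes precisely on the line $\frac2{q_1}+\frac1{r_1}=\frac12$, and its singular exponent $\frac12-\frac1{r_1}$ is $<1$, so no endpoint obstruction appears in $\Rm^2$. Summing the dyadic pieces uses the Littlewood--Paley square-function estimate together with Minkowski in $L^{q_1}_tL^{r_1}_x$, which is exactly where $r_1<\infty$ is used; admissible pairs below the line (the inequality $\frac2{q_i}+\frac1{r_i}\le\frac12$) are recovered by trading integrability for derivatives via Bernstein and Sobolev embedding, which is what the loss parameters $\rho_1,\rho_2$ encode in the scaling relations $\frac1{q_1}+\frac2{r_1}=1+\rho_1-s$ and $\frac1{q_2}+\frac2{r_2}=\rho_2+s$.

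For the inhomogeneous term, the untruncated operator $\int_{\Rm}U(t-\tau)G(\tau)\,d\tau=U(t)\int_{\Rm}U(-\tau)G(\tau)\,d\tau$ factors through $L^2$, so it is bounded $L^{\bar q_2}_tL^{\bar r_2}_x\to L^{q_1}_tL^{r_1}_x$ (with the correct derivatives) by composing (i) with (ii). To replace $\int_{\Rm}$ by the retarded integral $\int_0^t$ I would invoke the Christ--Kiselev lemma, which applies whenever $\bar q_2<q_1$; in $\Rm^2$ the admissibility constraint together with $r_i<\infty$ forces $q_i>4$, hence $\bar q_2<4/3<q_1$, so this is automatic. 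The bound $\sup_t\|(u,u_t)\|_{\dot H^s\times\dot H^{s-1}}$ for the Duhamel term does not need Christ--Kiselev at all: it is the $q_1=\infty$ case of the dual estimate applied to $\mathbf{1}_{[0,t]}(\tau)G(\tau)$, whose space-time norm is at most that of $G$.

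The delicate point is not any single analytic input but the bookkeeping: one must carry the regularity $s$ and the loss parameters $\rho_1,\rho_2$ correctly through the scaling of the frequency-localized dispersive estimate so that all the stated (generally non-sharp) admissible pairs are handled at once, while checking that no forbidden endpoint is ever approached --- $L^1_t$ in the Hardy--Littlewood--Sobolev step and $r=\infty$ in the square-function step. In $\Rm^2$ the hypotheses $q_i\ge2$, $r_i<\infty$ and $\frac2{q_i}+\frac1{r_i}\le\frac12$ are exactly what rule these out.
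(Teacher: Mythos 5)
The paper does not prove this proposition at all: it is quoted verbatim from Proposition 3.1 of Ginibre--Velo \cite{strichartz}, so there is no internal argument to compare yours against. Your blind reconstruction follows the standard route by which such estimates are actually established (and essentially the route of \cite{strichartz} itself): Duhamel reduction to the half-wave group, a frequency-localized dispersive estimate $\|e^{it\sqrt{-\Delta}}P_Nf\|_{L^\infty(\Rm^2)}\lesssim N^{3/2}|t|^{-1/2}\|f\|_{L^1}$ from stationary phase on the circle, interpolation with the $L^2$ isometry, $TT^*$ plus one-dimensional Hardy--Littlewood--Sobolev closing exactly on the sharp line $\frac{2}{q}+\frac{1}{r}=\frac12$, Littlewood--Paley summation (where $r<\infty$ enters), Sobolev/Bernstein trading to reach the non-sharp pairs encoded by $\rho_1,\rho_2$, and Christ--Kiselev for the retarded term, which is available since $r_i<\infty$ forces $q_i>4$ and hence $\bar q_2<4/3<q_1$. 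Your observation that the $\sup_t$ energy piece of the Duhamel term follows from the dual homogeneous estimate applied to $\mathbf{1}_{[0,t]}G$, without Christ--Kiselev, is also the correct standard device. The only points where a full write-up would need more care than your sketch provides are (i) the short-time/low-frequency regime, where $|t|^{-1/2}$ is not integrable near $t=0$ against $L^1$ normalization and one must use $\min\{N^2,N^{3/2}|t|^{-1/2}\}$ before summing, and (ii) the derivative bookkeeping for the general $(\rho_1,\rho_2,s)$ gap conditions, which you explicitly flag as the delicate part. As an outline of a proof the paper chose to cite rather than reproduce, this is sound.
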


\begin{remark}
 We call $(q_1,r_1)$ an $s$-admissible pair if $q_1,r_1,s$ and $\rho_1=0$ satisfies the conditions given above.  
\end{remark}

\paragraph{Chain rule} We also need the following ``chain rule'' for fractional derivatives. Please refer to Christ-Weinstein \cite{fchain}, 
and Taylor \cite{fchain4} for more details. 
\begin{lemma} \label{chain rule}
 Assume that a function $F$ satisfies $F(0) = F'(0) = 0$ and 
 \begin{align*}
  &|F'(a+b)| \leq C(|F'(a)| + |F'(b)|),& &|F''(a+b)| \leq C(|F''(a)| + |F''(b)|),&
 \end{align*}
 for all $a,b\in \Rm$. Then we have
 \[
  \|D^\alpha F(u)\|_{L^p (\Rm^2)} \leq C \|D^\alpha u\|_{L^{p_1} (\Rm^2)} \|F'(u)\|_{L^{p_2} (\Rm^2)}
 \]
 for $0<\alpha<1$ and $1/p = 1/p_1+1/p_2$, $1<p, p_1,p_2<\infty$.
\end{lemma}

\begin{proposition} \label{scattering criterion}
Assume $p \geq 5$.  Let $u$ be a solution to (CP1) with $(u_0,u_1) \in \dot{H}^{s_p} \times \dot{H}^{s_p-1}(\Rm^2)$. If $u$ can be defined for all $t\geq 0$ and scatters in the critical Sobolev space, i.e. there exists $(u_0^+,u_1^+) \in \dot{H}^{s_p}\times \dot{H}^{s_p-1}$ so that
\[
 \lim_{t\rightarrow +\infty} \left\|\begin{pmatrix} u(\cdot, t)\\ u_t(\cdot, t)\end{pmatrix} - \mathbf{S}_L(t) \begin{pmatrix} u_0^+\\ u_1^+\end{pmatrix} \right\|_{\dot{H}^{s_p}\times \dot{H}^{s_p-1}(\Rm^2)} = 0,
\]
 then $u \in L^q L^r ([0,\infty) \times \Rm^2)$ for all $s_p$-admissible pairs $(q,r)$. 
\end{proposition}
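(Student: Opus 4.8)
The plan is to first bound one scaling--critical Strichartz norm of $u$ over $[0,\infty)$ and then bootstrap it to every $s_p$-admissible pair. I would begin by fixing an $s_p$-admissible pair $(q_0,r_0)$, together with finitely many auxiliary admissible pairs and a fractional order $\rho\ge 0$ (one may take $\rho=0$ when $p=5$), chosen so that, using H\"older, the Sobolev embedding $\dot H^{s_p}\hookrightarrow L^{p-1}(\Rm^2)$, and the fractional chain rule of Lemma~\ref{chain rule} to distribute the $p$ factors of $|u|^{p-1}u$, one obtains for every time interval $I$ a nonlinear estimate of the schematic form
\[
 \big\|D_x^{-\rho}\big(|u|^{p-1}u\big)\big\|_{L^{\bar q}L^{\bar r}(I\times\Rm^2)}\;\lesssim\;M^{\theta}\,\|u\|_{L^{q_0}L^{r_0}(I\times\Rm^2)}^{\,p-\theta},\qquad p-\theta>1,
\]
where $(\bar q,\bar r)$ is dual to a pair admissible for Proposition~\ref{Strichartz estimates} and $M:=\sup_{t\ge 0}\|(u(t),u_t(t))\|_{\dot H^{s_p}\times\dot H^{s_p-1}}$. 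The existence of such a choice is precisely where $p\ge 5$, i.e. $s_p\ge 1/2$, is used: it is what keeps all exponents forced by the scaling of the nonlinearity inside the admissible range $\frac{2}{q}+\frac{1}{r}\le\frac{1}{2}$ of Proposition~\ref{Strichartz estimates} (for $3<p<5$ this breaks down, which is why a separate argument is needed and why Theorem~\ref{main 3} also requires $p>5$). I would also record that $M<\infty$: the map $t\mapsto\|(u(t),u_t(t))\|_{\dot H^{s_p}\times\dot H^{s_p-1}}$ is continuous on $[0,\infty)$ and, since $\mathbf{S}_L$ is an isometry on $\dot H^{s_p}\times\dot H^{s_p-1}$ and $u$ scatters, it tends to $\|(u_0^+,u_1^+)\|_{\dot H^{s_p}\times\dot H^{s_p-1}}$ as $t\to\infty$.

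Next I would treat the tail $[T,\infty)$. Put $v(t)=\mathbf{S}_L(t)(u_0^+,u_1^+)$; by the homogeneous Strichartz estimate $\|v\|_{L^{q_0}L^{r_0}(\Rm\times\Rm^2)}<\infty$, so given $\eps>0$ there is $T_0$ with $\|v\|_{L^{q_0}L^{r_0}([T_0,\infty)\times\Rm^2)}<\eps$. Using the scattering hypothesis, I choose $T\ge T_0$ with $\|\mathbf{S}_L(-T)(u(T),u_t(T))-(u_0^+,u_1^+)\|_{\dot H^{s_p}\times\dot H^{s_p-1}}<\eps$; then $\mathbf{S}_L(t-T)(u(T),u_t(T))=v(t)+\mathbf{S}_L(t-T)w_T$ with $\|w_T\|_{\dot H^{s_p}\times\dot H^{s_p-1}}<\eps$, and a second use of the homogeneous Strichartz estimate gives $\|\mathbf{S}_L(\cdot-T)(u(T),u_t(T))\|_{L^{q_0}L^{r_0}([T,\infty)\times\Rm^2)}\lesssim\eps$. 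On $[T,\infty)$ the solution $u$ obeys the Duhamel identity expressing $(u,u_t)$ as the free evolution of its data at time $T$ plus the Duhamel term built from $-|u|^{p-1}u$; since $\|u\|_{L^{q_0}L^{r_0}([T,T']\times\Rm^2)}$ is finite and continuous in $T'$ by the local theory (\cite{loc1,ls}), feeding the displayed nonlinear estimate into this identity and running the standard continuity argument --- with $\eps$ small depending only on $M$ and the Strichartz constants, which is allowed since $p-\theta>1$ --- yields $\|u\|_{L^{q_0}L^{r_0}([T,\infty)\times\Rm^2)}<\infty$.

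It remains to handle $[0,T]$ and to upgrade to all admissible pairs. On the compact interval $[0,T]$ the solution lies in $C([0,T];\dot H^{s_p})$, so covering $[0,T]$ by finitely many subintervals on which the local iteration of Proposition~\ref{Strichartz estimates} converges shows $\|u\|_{L^{q_0}L^{r_0}([0,T]\times\Rm^2)}<\infty$; together with the previous step this gives $\|u\|_{L^{q_0}L^{r_0}([0,\infty)\times\Rm^2)}<\infty$. Now I fix an arbitrary $s_p$-admissible pair $(q,r)$, partition $[0,\infty)=\bigcup_{j=1}^{N}I_j$ into finitely many intervals on each of which $\|u\|_{L^{q_0}L^{r_0}(I_j\times\Rm^2)}$ is below the threshold of the nonlinear estimate, apply Proposition~\ref{Strichartz estimates} on $I_j$, control the nonlinearity by Lemma~\ref{chain rule} and H\"older by $\|u\|_{L^{q_0}L^{r_0}(I_j)}^{\,p-1}$ times the $L^qL^r$ (and companion) norms of $u$ on $I_j$, absorb the latter, and conclude $\|u\|_{L^qL^r(I_j\times\Rm^2)}<\infty$; summing over $j=1,\dots,N$ finishes the proof. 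The main obstacle is the very first step: choosing $(q_0,r_0)$, the auxiliary pairs, the order $\rho$, and the exact way the $p$ factors of $|u|^{p-1}u$ are split, so that after applying $D_x^{-\rho}$ the nonlinearity lands in a dual Strichartz space legal for Proposition~\ref{Strichartz estimates} with at least one full factor of $u$ left over for absorption. Everything after that is the routine subdivision-and-continuity machinery.
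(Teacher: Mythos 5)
Your architecture is exactly the paper's: bound one scaling-critical Strichartz norm globally by splitting $[0,\infty)$ into a tail where the free evolution of the data at time $T$ is small (via the scattering hypothesis plus the homogeneous Strichartz estimate applied to $u^+=\mathbf{S}_L(t)(u_0^+,u_1^+)$) and a compact piece covered by finitely many intervals, then feed the resulting global bound back through Strichartz to reach every $s_p$-admissible pair. The tail argument, the finite covering of $[0,T]$, and the final upgrade all match the paper step for step (the paper's upgrade is even simpler than yours: once the global critical norm is finite, a single application of Proposition~\ref{Strichartz estimates} on $[0,\infty)$ controls $\|u\|_{L^qL^r}$ with no further partitioning or absorption).

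The one place your write-up is not a proof is the place you yourself flag as the main obstacle: you assert, but do not exhibit, a pair $(q_0,r_0)$, auxiliary pairs, and an order $\rho$ for which the nonlinear estimate closes. The paper resolves this with a single concrete choice that makes the whole issue disappear: take
\[
 \|u\|_{Y(I)} = \|u\|_{L^{\frac{3p(p-1)}{3p-5}} L^{\frac{6p(p-1)}{3p+5}}(I\times\Rm^2)},
\]
which is $s_p$-admissible with $\rho_1=0$; the constraint $\tfrac{2}{q}+\tfrac{1}{r}\le\tfrac12$ evaluates to $\tfrac{5}{2p}\le\tfrac12$, i.e.\ exactly $p\ge 5$, confirming your diagnosis of where that hypothesis enters. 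The nonlinearity is then handled by pure H\"older with no derivatives at all: $\||u|^{p-1}u\|_{L^{\frac{3(p-1)}{3p-5}}L^{\frac{6(p-1)}{3p+5}}(I\times\Rm^2)}\le\|u\|_{Y(I)}^p$, and this dual pair is legal for Proposition~\ref{Strichartz estimates} with $\rho_2=0$. In particular your parenthetical ``one may take $\rho=0$ when $p=5$'' is unnecessarily pessimistic --- $\rho=0$ works for all $p\ge5$ with this norm --- and neither the fractional chain rule of Lemma~\ref{chain rule}, nor the auxiliary pairs, nor the quantity $M$ and the exponent $\theta$ are needed at this stage (the paper only invokes the fractional chain rule later, in the proof of Theorem~\ref{main 3}, to upgrade scattering from the critical space to the energy space). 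Until you commit to such a choice and verify the exponent arithmetic, the proof is incomplete; with the paper's choice, everything you wrote after the first paragraph goes through essentially verbatim.
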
 
\begin{proof} 
 The proof is similar to the higher dimensional case. For convenience of readers, we stretch a proof here. First of all, given any time $t_0$ and time interval $I$ containing $t_0$, we introduce the $Y(I)$ norm 
 \[
  \|u\|_{Y(I)} = \|u\|_{L^\frac{3p(p-1)}{3p-5} L^\frac{6p(p-1)}{3p+5} (I \times \Rm^2)}.
 \]
 By Strichartz estimates, the solution $u$ to the linear equation $\partial_t^2 u - \Delta u = F$ with initial data $(u_0,u_1)$ at time $t_0$ satisfies 
 \begin{align*}
  \|u\|_{Y(I)} & \leq \|\mathbf{S}_L(t-t_0) (u_0,u_1)\|_{Y(I)} + C_1 \|F\|_{L^{\frac{3(p-1)}{3p-5}} L^\frac{6(p-1)}{3p+5} (I\times \Rm^2)};\\
  \|\mathbf{S}_L(t-t_0) (u_0,u_1)\|_{Y(I)} & \leq C_2 \|(u_0,u_1)\|_{\dot{H}^{s_p} \times \dot{H}^{s_p-1}(\Rm^2)}.
 \end{align*} 
 In addition, we have 
\begin{align*}
 \left\|-|u|^{p-1} u\right\|_{L^{\frac{3(p-1)}{3p-5}} L^\frac{6(p-1)}{3p+5} (I \times \Rm^2)} & \leq \|u\|_{Y(I)}^p; \\
\left\||v|^{p-1}v-|u|^{p-1} u\right\|_{L^{\frac{3(p-1)}{3p-5}} L^\frac{6(p-1)}{3p+5} (I\times \Rm^2)} & \leq C_3 \|u-v\|_{Y(I)}(\|u\|_{Y(I)}^{p-1} + \|v\|_{Y(I)}^{p-1}).
\end{align*}
Given $u \in Y(I)$, we define $\mathbf{T} u$ to be the solution $v$ to linear wave equation $\partial_t^2 v - \Delta v = -|u|^{p-1} u$ with initial data $(u_0,u_1)$ at time $t_0$. Then the inequalities above imply
\begin{align*}
 \|\mathbf{T} u\|_{Y(I)} & \leq  \|\mathbf{S}_L(t-t_0) (u_0,u_1)\|_{Y(I)} + C_1 \|u\|_{Y(I)}^p; \\
 \|\mathbf{T} u - \mathbf{T} v\|_{Y(I)} & \leq C_3\left(\|u\|_{Y(I)}^{p-1} + \|v\|_{Y(I)}^{p-1}\right)  \|u-v\|_{Y(I)}.
\end{align*} 
Thus there exists a constant $\delta = \delta(p) >0$, so that if initial data $(u_0,u_1)$ satisfy 
\[
 \|\mathbf{S}_L(t-t_0) (u_0,u_1)\|_{Y(I)} \leq \delta,
\]
then the operator $\mathbf{T}$ becomes a contraction map on the metric space $\{u: \|u\|_{Y(I)} < 2\delta\}$. As a result there exists a unique solution $u$ to (CP1) in the time interval $I$ with $\|u\|_{Y(I)} \leq 2 \delta$. In addition, we have
\begin{align*}
 &\|\mathbf{S}_L(t-T) (u(\cdot,T),u_t(\cdot,T))\|_{Y([T,+\infty))}\\
 &\quad  \leq  \|u^+\|_{Y([T,+\infty))} + \|\mathbf{S}_L(t-T)(u(\cdot,T)-u^+(\cdot,T), u_t(\cdot,T)-u_t^+(\cdot,T))\|_{Y([T,+\infty))}\\
 &\quad \leq \|u^+\|_{Y([T,+\infty))} + C_2 \|(u(\cdot,T)-u^+(\cdot,T), u_t(\cdot,T)-u_t^+(\cdot,T))\|_{\dot{H}^{s_p}\times \dot{H}^{s_p-1}}.
\end{align*}
Here $u^+ = \mathbf{S}_L(t) (u_0^+, u_1^+)$ is the free wave with initial data $(u_0^+,u_1^+)$. We utilize the scattering assumption and the Strichartz estimates $\|u_+\|_{Y(\Rm^+)} < +\infty$ to conclude
\[
 \lim_{T\rightarrow +\infty} \|\mathbf{S}_L(t-T) (u(\cdot,T),u_t(\cdot,T))\|_{Y([T,+\infty))} = 0.
\]
Thus there exists a large time $T>0$ so that 
\[
 \|\mathbf{S}_L(t-T) (u(\cdot,T),u_t(\cdot,T))\|_{Y([T,+\infty))} \leq \delta,
\]
In addition, since the time interval $[0,T]$ is compact, we can also split it into finite small intervals $[0,T] = I_1 \cup I_2 \cup \cdots \cup I_m$ with a time $t_k \in I_k$ in each interval, so that 
\[
 \|\mathbf{S}_L(t-t_k) (u(\cdot,t_k),u_t(\cdot,t_k))\|_{Y(I_k)} \leq \delta.
\]
According to the local theory given above, we obtain 
\begin{align*}
 &\|u\|_{Y([T,+\infty))} < 2\delta;& &\|u\|_{Y(I_k)} < 2\delta.&
\end{align*}
Therefore we have $\|u\|_{Y([0,+\infty))} < + \infty$. Finally we apply the Strichartz estimates and conclude
\begin{align*}
 \|u\|_{L^q L^r ([0,\infty) \times \Rm^2)} & \lesssim \|\mathbf{S}_L(t) (u_0,u_1)\|_{L^q L^r([0,+\infty)\times \Rm^2)} + \left\|-|u|^{p-1}u\right\|_{L^{\frac{3(p-1)}{3p-5}} L^\frac{6(p-1)}{3p+5} ([0,+\infty) \times \Rm^2)}\\
 & \lesssim \|(u_0,u_1)\|_{\dot{H}^{s_p}\times \dot{H}^{s_p-1}(\Rm^2)} + \|u\|_{Y([0,+\infty))}^p < +\infty
\end{align*}
for any given $s_p$-admissible pair $(q,r)$. 
\end{proof}

\begin{theorem}[Radiation field] \label{radiation}
Let $u$ be a solution to the free wave equation $\partial_t^2 u - \Delta u = 0$ with initial data $(u_0,u_1) \in \dot{H}^1 \times L^2(\Rm^2)$. Then 
\[
 \lim_{t\rightarrow +\infty} \int_{\Rm^2} |\slashed{\nabla} u(x,t)|^2 dx = 0
\]
 and there exists a function $G_+ \in L^2(\Rm \times \mathbb{S}^1)$ so that
\begin{align*}
 \lim_{t\rightarrow +\infty} \int_0^\infty \int_{\mathbb{S}^1} \left|r^{1/2} \partial_t u(r\Theta, t) - G_+(r-t, \Theta)\right|^2 d\Theta dr &= 0;\\
 \lim_{t\rightarrow +\infty} \int_0^\infty \int_{\mathbb{S}^1} \left|r^{1/2} \partial_r u(r\Theta, t) + G_+(r-t, \Theta)\right|^2 d\Theta dr & = 0.
\end{align*}
In addition, the map $(u_0,u_1) \rightarrow \sqrt{2} G_+$ is a bijective isometry form $\dot{H}^1 \times L^2(\Rm^2)$ to $L^2 (\Rm \times \mathbb{S}^1)$.
\end{theorem}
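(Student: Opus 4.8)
The plan is to prove the statement first for a dense class of initial data on which the solution has an explicit Fourier representation amenable to stationary phase, and then to transfer all the conclusions to general finite-energy data by a density argument based on conservation of the energy norm. Let $\mathcal{D}$ be the set of pairs $(u_0,u_1)$ whose Fourier transforms $\hat{u}_0,\hat{u}_1$ lie in $C_c^\infty(\Rm^2\setminus\{0\})$; this is dense in $\dot{H}^1\times L^2(\Rm^2)$. For $(u_0,u_1)\in\mathcal{D}$ I would decompose the solution into its two half-wave components, $u=u_++u_-$ with $\widehat{u_\pm}(\xi,t)=\tfrac12 e^{\pm it|\xi|}\bigl(\hat{u}_0(\xi)\mp i|\xi|^{-1}\hat{u}_1(\xi)\bigr)$, each an oscillatory integral with amplitude compactly supported away from the origin.

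The first and main step is a precise description of $u$ near the forward light cone for $(u_0,u_1)\in\mathcal{D}$. In polar coordinates $x=r\Theta$, $\xi=\rho\omega$ the phase of each half-wave is $r\rho\,\Theta\cdot\omega\pm t\rho$, whose only stationary points in $\omega\in\mathbb{S}^1$ are $\omega=\pm\Theta$. A one-dimensional stationary phase expansion in $\omega$ produces the characteristic factor $(r\rho)^{-1/2}e^{\pm i\pi/4}$; integrating by parts in $\rho$ then disposes of the terms whose phase is $\rho(r+t)$, leaving a leading term equal to $r^{-1/2}$ times a function of $r-t$ and $\Theta$, plus an error that is $O(r^{-3/2})$ times a rapidly decaying function of $r-t$. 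This defines the profile: there is $G_+(s,\Theta)$, smooth in $\Theta$ and Schwartz in $s$, obtained from $\hat{u}_0,\hat{u}_1$ by restricting them to the rays $\{\rho\Theta:\rho>0\}$ and $\{-\rho\Theta:\rho>0\}$, multiplying by suitable powers of $\rho$ and by the phases $e^{\pm i\pi/4}$, and inverting the Fourier transform in $\rho\mapsto s$ — the half-derivative (Hilbert transform) flavour of this operation being characteristic of even dimensions. The same expansion, differentiated, gives $r^{1/2}\partial_r u(r\Theta,t)\approx -r^{1/2}\partial_t u(r\Theta,t)\approx -G_+(r-t,\Theta)$ near the cone, with an error that is $O(r^{-1})$ times a Schwartz function of $r-t$; splitting $r>0$ into $r\lesssim t$ (where all three quantities decay rapidly in $t$, by non-stationary phase in $\xi$) and $r\gtrsim t$ (where the expansion applies and the error contributes $O(t^{-2})$ in $L^2(dr\,d\Theta)$) yields the two $L^2$ limits in the statement. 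The tangential derivative gains an extra factor $r^{-1}\approx t^{-1}$, so $\int_{\Rm^2}|\slashed{\nabla}u(x,t)|^2\,dx=O(t^{-2})\to 0$.

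The second step is the isometry $\|\sqrt{2}\,G_+\|_{L^2(\Rm\times\mathbb{S}^1)}=\|(u_0,u_1)\|_{\dot{H}^1\times L^2(\Rm^2)}$, which for $(u_0,u_1)\in\mathcal{D}$ comes from Plancherel in the $s$ variable applied to the explicit formula for $G_+$. Since the two half-wave contributions to $G_+$ occupy disjoint half-lines in the variable dual to $s$, they do not interfere, and $\int_\Rm|G_+(s,\Theta)|^2\,ds$ turns into a $\rho$-weighted integral over $\rho>0$ of $|\hat{u}_0(\pm\rho\Theta)|^2$ and $|\hat{u}_1(\pm\rho\Theta)|^2$, with the $\hat{u}_0$--$\hat{u}_1$ cross terms cancelling once the two pieces are combined. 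Integrating over $\Theta\in\mathbb{S}^1$ and using that $\Theta$ and $-\Theta$ each sweep out all of $\mathbb{S}^1$, comparison with $\int_{\Rm^2}(|\nabla u_0|^2+|u_1|^2)\,dx=(2\pi)^{-2}\int_0^\infty\!\!\int_{\mathbb{S}^1}\bigl(\rho^3|\hat{u}_0(\rho\omega)|^2+\rho|\hat{u}_1(\rho\omega)|^2\bigr)\,d\omega\,d\rho$ produces exactly the factor $2$ and the correct constant. (Equivalently, the isometry says a free wave sheds all of its energy into a neighbourhood of the light cone, $\int_{\Rm^2}|u_t(\cdot,t)|^2\,dx\to E$, but that is essentially already the content of the first step, so I would argue through Plancherel.) Because $\mathcal{D}$ is dense and every quantity in the three limits is dominated by the conserved energy norm, a standard $\eps/3$ approximation extends the existence of $G_+\in L^2(\Rm\times\mathbb{S}^1)$, the isometry, and the three limits to arbitrary $(u_0,u_1)\in\dot{H}^1\times L^2(\Rm^2)$. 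Finally, the extended map $(u_0,u_1)\mapsto\sqrt{2}\,G_+$ is a linear isometry, hence injective with closed range; its range contains the image of $\mathcal{D}$, which is dense because the formula for $G_+$ inverts explicitly to recover $\hat{u}_0,\hat{u}_1\in C_c^\infty(\Rm^2\setminus\{0\})$ from any smooth profile compactly supported away from $s=0$. Thus the map is onto, and therefore a bijective isometry.

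The main obstacle is making the stationary-phase analysis of the first step quantitative enough to pass to $L^2$ limits: one must control the expansion uniformly across the whole cross-section of the cone, treat the region $r$ near $0$ by hand (where the $O(r^{-3/2})$ remainder is useless but crude non-stationary phase in $\xi$ suffices), and keep track of the slowly decaying tail of $u$ \emph{inside} the light cone, which in even dimensions is not removed by finite speed of propagation. Pinning down the constant $\sqrt{2}$ and confirming that the $\hat{u}_0$--$\hat{u}_1$ cross terms cancel likewise requires careful bookkeeping of the two stationary points $\omega=\pm\Theta$ and the phases $e^{\pm i\pi/4}$ they carry.
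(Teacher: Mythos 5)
Your proposal is essentially correct, but it takes a genuinely different route from the paper. The paper does not give a self-contained proof at all: it observes that the argument of Duyckaerts--Kenig--Merle \cite{dkm3} for $d\geq 3$ carries over to $d=2$ verbatim except for one ingredient, namely the limit $\lim_{t\to+\infty}\int |u(x,t)|^2/|x|^2\,dx=0$, which in higher dimensions rests on Hardy's inequality and fails in the plane; the paper's sole new contribution is a substitute lemma, valid for smooth compactly supported data, asserting $\lim_{t\to+\infty}\int_{|x|>t-\eta}|u(x,t)|^2/|x|^2\,dx=0$ for every $\eta$, proved in one line from the dispersive bound $|u(x,t)|\lesssim |t|^{-1/2}$ (the exterior restriction is what makes the area factor harmless). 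Your approach instead builds everything from scratch by stationary phase on a frequency-localized dense class, with the isometry obtained from Plancherel and the general case recovered by an $\eps/3$ density argument using conservation of the energy norm --- the same density skeleton as DKM, but with the dense-class asymptotics established by oscillatory-integral expansion rather than by physical-space energy identities. What your route buys is an explicit formula for $G_+$ in terms of $\hat{u}_0,\hat{u}_1$ restricted to rays, which makes the factor $\sqrt{2}$, the cancellation of cross terms, and especially the surjectivity of the map completely transparent; what it costs is the uniform quantitative control of the stationary-phase remainders across the cross-section of the cone and near $r=0$, which you correctly identify as the main labour. One point worth making explicit in a written version: your dense class is compactly supported in \emph{frequency}, not in physical space, so the slowly decaying interior tail (the failure of strong Huygens in even dimensions, which is exactly the issue the paper's substitute lemma addresses) is handled for you by non-stationary phase in $\xi$, since $|\nabla_\xi(x\cdot\xi\pm t|\xi|)|\geq t-|x|$ there; you gesture at this but it deserves to be the stated reason the interior region is negligible, as it is the precise analogue of the paper's only nontrivial step.
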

The $3$-dimensional version of this theorem has been known many years ago. For example, please see Friedlander \cite{radiation1, radiation2}. Duyckaerts-Kenig-Merle \cite{dkm3} gives a proof for all dimensions $d\geq 3$. The $2$-dimensional case can be proved in almost the same way. In fact, only one of the ingredients in Duychaerts-Kenig-Merle's proof is unique in $3$ or higher dimensions: the limit
\[
 \lim_{t\rightarrow +\infty} \int_{\Rm^d} \frac{|u(x,t)|^2}{|x|^2} dx = 0
\]
holds for all free waves $u$. In general this is not ture in the $2$-dimensional case. Since Hardy's inequality does not hold, we do not even know whether the integral 
\[
 \int_{\Rm^2} \frac{|u(x,t)|^2}{|x|^2} dx
\]
is finite or not. Nevertheless, a careful review of the proof given in \cite{dkm3} shows that we may substitute the limit above by a weaker version: 

\begin{lemma}
 Let $u$ be a solution to $2$-dimensional free wave equation $\partial_t^2 u - \Delta u = 0$ with smooth and compactly supported initial data. Then we have
 \[
  \lim_{t\rightarrow +\infty} \int_{|x|>t-\eta} \frac{|u(x,t)|^2}{|x|^2} dx = 0, \quad \forall \eta \in \Rm.
 \]
\end{lemma}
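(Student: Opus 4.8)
The plan is to combine finite speed of propagation with the conservation of energy. Since the initial data are supported in some ball $\{|x|\le R_0\}$, finite speed of propagation shows $u(\cdot,t)$ is supported in $\{|x|\le t+R_0\}$ for every $t>0$. Consequently, for large $t$ the integral in question is taken over the annulus $\{t-\eta<|x|<t+R_0\}$ (and it is identically zero whenever $t-\eta\ge t+R_0$, since then $\{|x|>t-\eta\}$ does not meet the support of $u$, so we may assume $0<t-\eta<t+R_0$), and on the outer boundary of this annulus $u$ vanishes. This is the crucial structural fact: the integral $\int_r^{\infty}\rho^{-1}\,d\rho$ diverges, and it is precisely the truncation at $\rho=t+R_0$ that will produce smallness.

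First I would write, for $t-\eta<r<t+R_0$ and a unit vector $\Theta$,
\[
 u(r\Theta,t)=-\int_r^{t+R_0}\partial_\rho u(\rho\Theta,t)\,d\rho ,
\]
and apply the Cauchy--Schwarz inequality with weight $\rho$, enlarging the interval of integration in both factors to the full radial range of the annulus:
\[
 |u(r\Theta,t)|^2\le\left(\int_{t-\eta}^{t+R_0}\frac{d\rho}{\rho}\right)\left(\int_{t-\eta}^{t+R_0}|\partial_\rho u(\rho\Theta,t)|^2\,\rho\,d\rho\right).
\]
Denote the first factor by $L_t=\ln\frac{t+R_0}{t-\eta}$.

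Next I would integrate this pointwise bound over the annulus in polar coordinates. The radial integration $\int_{t-\eta}^{t+R_0}r^{-1}\,dr$ contributes a second factor $L_t$, while integration in $\Theta$ of the remaining factor reconstitutes $\int_{t-\eta<|x|<t+R_0}|u_r(x,t)|^2\,dx\le\int_{\Rm^2}|\nabla u(x,t)|^2\,dx$. By the conservation of the energy $\frac12\int_{\Rm^2}(|\nabla u|^2+|u_t|^2)\,dx$, this last quantity is bounded by a constant $C$ depending only on the initial data, so
\[
 \int_{|x|>t-\eta}\frac{|u(x,t)|^2}{|x|^2}\,dx\le C\,L_t^{\,2}=C\left(\ln\frac{t+R_0}{t-\eta}\right)^{2}.
\]
Since $(t+R_0)/(t-\eta)\to1$ as $t\to+\infty$, the right-hand side tends to $0$, which is the assertion.

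There is no essential difficulty beyond bookkeeping; the one point worth emphasizing is that the estimate is genuinely two-dimensional in flavour — the logarithmic rather than power divergence of $\int r^{-1}\,dr$ is what forces us to exploit the compact support of the data, and it is also why the lemma must be stated for compactly supported data only, in place of the stronger limit $\int_{\Rm^d}|x|^{-2}|u|^2\,dx\to0$ that is available in higher dimensions.
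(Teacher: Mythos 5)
Your argument is correct, but it takes a genuinely different route from the paper. The paper disposes of this lemma in one line by invoking the pointwise dispersive estimate $|u(x,t)|\lesssim |t|^{-1/2}$ for smooth compactly supported data: combined with finite speed of propagation, the integral lives on an annulus $\{t-\eta<|x|<t+R_0\}$ of area $O(t)$ on which $|u|^2/|x|^2\lesssim t^{-1}\cdot t^{-2}$, giving $O(t^{-2})$. You instead avoid any dispersive decay: you exploit the vanishing of $u$ on the outer edge $|x|=t+R_0$ of its support, write $u$ as a radial integral of $u_r$ from that edge, and apply Cauchy--Schwarz with the weight $\rho$, paying two logarithmic factors $L_t=\ln\frac{t+R_0}{t-\eta}=O(1/t)$ against the conserved energy. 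This yields the same $O(t^{-2})$ rate. What your approach buys is independence from the $L^\infty$ decay estimate --- it uses only finite speed of propagation and energy conservation, so (after a routine density argument to justify the fundamental theorem of calculus step) it extends to any finite-energy data with compact support, not just smooth data; the paper's argument is shorter but leans on a stronger input. One small point of care in your write-up: the identity $u(r\Theta,t)=-\int_r^{t+R_0}\partial_\rho u(\rho\Theta,t)\,d\rho$ requires $u((t+R_0)\Theta,t)=0$, which holds because $u$ is continuous and vanishes on the open set $\{|x|>t+R_0\}$; you use this implicitly and it is fine, but it is the load-bearing step and worth stating.
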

\begin{proof}
 This immediately follows the well-known dispersive estimate: if the initial data is smooth and compactly supported, then we have $|u(x,t)|\lesssim |t|^{-1/2}$. 
\end{proof}

The following result can be proved by Poisson's formula and smooth approximation techniques. The authors would like to mention that although this result seems to be a direct consequence of radiation fields given above, it is actually an ingredient of the proof of radiation fields. 
\begin{proposition} \label{hollow center}
 Let $u$ be a free wave with finite energy. Then we always have 
 \[
  \lim_{\eta\rightarrow +\infty} \sup_{t\geq \eta} \int_{|x|<t-\eta} \left(|\nabla u(x,t)|^2 + |u_t(x,t)|^2\right) dx = 0.
 \]
\end{proposition}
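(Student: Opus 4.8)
The plan is to reduce, by density together with conservation of energy, to the case of smooth compactly supported initial data, and then, for such data, to prove a quantitative pointwise decay estimate for the first-order derivatives of $u$ inside the light cone by means of the two-dimensional Poisson formula. For the reduction, note that for any free wave $\tilde u$ one has pointwise $|\nabla u|^2+|u_t|^2\le 2(|\nabla\tilde u|^2+|\tilde u_t|^2)+2(|\nabla(u-\tilde u)|^2+|(u-\tilde u)_t|^2)$, so $\int_{|x|<t-\eta}(|\nabla u|^2+|u_t|^2)\,dx\le 2\int_{|x|<t-\eta}(|\nabla\tilde u|^2+|\tilde u_t|^2)\,dx+2\|(u_0-\tilde u_0,u_1-\tilde u_1)\|_{\dot{H}^1\times L^2}^2$, the last term being time-independent; since $C_c^\infty(\Rm^2)\times C_c^\infty(\Rm^2)$ is dense in $\dot{H}^1\times L^2(\Rm^2)$, it suffices to establish the statement, indeed with an explicit rate, for data $(u_0,u_1)$ supported in $\overline{B_R}$. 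For such data, writing $\partial$ for any one of $\partial_t,\partial_{x_1},\partial_{x_2}$ and setting $v=\partial u$, the function $v$ is again a finite-energy free wave whose data are smooth and supported in $\overline{B_R}$ (namely $(u_1,\Delta u_0)$ if $\partial=\partial_t$, and $(\partial_{x_j}u_0,\partial_{x_j}u_1)$ otherwise), and $|\nabla u|^2+|u_t|^2=\sum_\partial|\partial u|^2$; hence it is enough to bound $\int_{|x|<t-\eta}|v(x,t)|^2\,dx$ uniformly in $t\ge\eta$ for each such $v$.

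Next I would fix $\eta\ge 2R$ and a point with $t-|x|\ge\eta$. Since the backward light cone of $(x,t)$ then contains $\overline{B_R}$, the disk of integration in Poisson's formula already contains the (shifted) support of the data, so the time derivative in Poisson's formula falls only on the integrand and one obtains $v(x,t)=\frac{-t}{2\pi}\int_{B_R}\frac{v_0(z)}{(t^2-|z-x|^2)^{3/2}}\,dz+\frac{1}{2\pi}\int_{B_R}\frac{v_1(z)}{\sqrt{t^2-|z-x|^2}}\,dz$, where $(v_0,v_1)$ is the data of $v$. On this region $t^2-|z-x|^2=(t-|z-x|)(t+|z-x|)\sim(t-|x|)\,t$ for every $z\in\overline{B_R}$, so the first integral is $\lesssim_R\|v_0\|_{L^1}(t-|x|)^{-3/2}t^{-1/2}$. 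For the second integral I would use that in each of our two cases $v_1$ (equal to $\Delta u_0$, respectively $\partial_{x_j}u_1$) has vanishing integral over $\Rm^2$, and subtract from the kernel $z\mapsto(t^2-|z-x|^2)^{-1/2}$ its value at $z=0$; a mean-value estimate for $\zeta\mapsto\zeta^{-1/2}$, applied with $\bigl||z-x|^2-|x|^2\bigr|\lesssim_R t$ and $t^2-|z-x|^2\sim(t-|x|)t$, gains an extra factor $(t-|x|)^{-1}$ and again yields $\lesssim_R\|v_1\|_{L^1}(t-|x|)^{-3/2}t^{-1/2}$. Thus $|v(x,t)|\le C\,(t-|x|)^{-3/2}t^{-1/2}$ on $\{|x|<t-\eta\}$, with $C=C((u_0,u_1),R)$.

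Integrating this in polar coordinates, $\int_{|x|<t-\eta}\frac{dx}{(t-|x|)^{3}\,t}\le 2\pi\int_\eta^\infty\rho^{-3}\,d\rho=\pi\eta^{-2}$ uniformly in $t\ge\eta$, so summing over the three first-order derivatives gives $\sup_{t\ge\eta}\int_{|x|<t-\eta}(|\nabla u|^2+|u_t|^2)\,dx\le C'((u_0,u_1),R)\,\eta^{-2}$ for smooth compactly supported data; combined with the density reduction (first let $\eta\to\infty$, then shrink the approximation error to $0$) this proves the proposition.

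The one genuinely delicate step is the treatment of the second integral above. The pointwise bound $|\partial u(x,t)|\lesssim((t-|x|)\,t)^{-1/2}$ that comes directly out of Poisson's formula is \emph{not} sufficient: squared and integrated over $\{|x|<t-\eta\}$ it produces a factor $\log(t/\eta)$, which is unbounded as $t\to\infty$, so the supremum over $t$ would be infinite. The decay must therefore be upgraded by half a power of $t-|x|$, and this upgrade is exactly what the cancellation $\int_{\Rm^2}v_1=0$ supplies — which is, in turn, why we first pass from $u$ to its derivatives. The remaining points are routine: the validity of the backward-cone reduction on the whole region of integration (this is what forces $\eta\ge 2R$, and it is harmless since $\eta\to\infty$), the finiteness of the $L^1$ norms appearing in the estimates, and the density of $C_c^\infty$ in the relevant spaces.
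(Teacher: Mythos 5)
Your argument is correct and follows exactly the route the paper indicates for this proposition (which it states without proof, saying only that it ``can be proved by Poisson's formula and smooth approximation techniques''): density in $\dot{H}^1\times L^2$ plus energy conservation to reduce to compactly supported data, then the 2D Poisson formula applied to the first-order derivatives of $u$. You also correctly identify and resolve the one genuinely nontrivial point that the paper leaves implicit, namely that the raw kernel bound only gives $((t-|x|)t)^{-1/2}$ (whose square integrates to a divergent logarithm) and that the zero-mean property of $\Delta u_0$ and $\partial_{x_j}u_1$ is what upgrades the decay to the summable rate $(t-|x|)^{-3/2}t^{-1/2}$.
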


\section{Morawetz Identity and Estimates}

\subsection{Morawetz identity}

\begin{proposition}[Morawetz identity] 
Let $u$ be a solution to (CP1) with a finite energy $E$. Then the following identity holds for any radius $R>0$ and time $t_1<t_2$. Here $\sigma_R$ is the regular line measure of the circle $|x|=R$ in $\Rm^2$. 
\begin{align}
 &\frac{1}{2R} \int_{t_1}^{t_2} \int_{|x|<R} \left(|\nabla u|^2 + |u_t|^2+\frac{p-3}{p+1}|u|^{p+1}\right) dx dt+ \frac{1}{4R^2} \int_{t_1}^{t_2} \int_{|x|=R} |u|^2 d\sigma_R(x) dt\nonumber \\
 & + \int_{t_1}^{t_2} \int_{|x|>R} \left(\frac{|\slashed{\nabla} u|^2}{|x|} + \frac{p-1}{2(p+1)}\cdot \frac{|u|^{p+1}}{|x|} - \frac{1}{4}\cdot\frac{|u|^2}{|x|^3}\right) dx dt  \nonumber \\
 & + \! \sum_{i=1,2} \int_{|x|<R} \left. \left(\frac{R^2\!-\!|x|^2}{2R^2} |u_r|^2 \!+\! \frac{1}{2}\left|\frac{|x|}{R}u_r \!+\! \frac{u}{2R} \!+\! (-1)^i  u_t\right|^2 \!+\! \frac{3|u|^2}{8R^2}\!+\!\frac{|\slashed{\nabla}u|^2}{2} \!+\! \frac{|u|^{p+1}}{p+1}\right)\right|_{t=t_i} \!dx \nonumber \\
 & + \sum_{i=1,2} \int_{|x|>R} \left. \left(\frac{1}{2}\left|u_r + \frac{u}{2|x|} +(-1)^i u_t\right|^2+\frac{|\slashed{\nabla}u|^2}{2} + \frac{|u|^{p+1}}{p+1} - \frac{|u|^2}{8|x|^2}\right)\right|_{t=t_i} dx  = 2E. \nonumber
\end{align}
\end{proposition}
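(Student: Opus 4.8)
The plan is to establish the identity by the standard Morawetz multiplier method, using the multiplier $\nabla\Psi\cdot\nabla u + \tfrac12(\Delta\Psi)u$, where $\Psi(x)$ is the radial weight whose gradient is $\nabla\Psi = x$ for $|x|\le R$ and $\nabla\Psi = Rx/|x|$ for $|x|\ge R$. Concretely, I would multiply the equation $\partial_t^2 u - \Delta u + |u|^{p-1}u = 0$ by $2\nabla\Psi\cdot\nabla u + (\Delta\Psi)u$ and integrate over the slab $\Rm^2\times[t_1,t_2]$. Since $\Psi$ is only Lipschitz (its second derivatives have a jump across $|x|=R$ and $\Delta\Psi$ contains a singular part supported on $|x|=R$), I would first carry out the computation with a smooth approximation $\Psi_\delta$ and pass to the limit, or equivalently work separately on the regions $|x|<R$ and $|x|>R$ and carefully account for the boundary contributions on the circle $|x|=R$; this is where the measure $\sigma_R$ and the term $\frac{1}{4R^2}\int\!\!\int_{|x|=R}|u|^2\,d\sigma_R\,dt$ enter.

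The key algebraic steps are the two classical differential identities. First, multiplying by $2\nabla\Psi\cdot\nabla u$ produces a time-derivative term $\partial_t\!\big(2u_t\,\nabla\Psi\cdot\nabla u\big)$ plus a spatial-divergence term plus a bulk term $\int (\partial^2_{jk}\Psi)(\partial_j u\,\partial_k u)$ coming from the Hessian of $\Psi$, together with the nonlinear contribution $(\nabla\Psi\cdot\nabla u)|u|^{p-1}u = \nabla\Psi\cdot\nabla\big(\tfrac1{p+1}|u|^{p+1}\big)$, which after integration by parts gives $-\tfrac1{p+1}(\Delta\Psi)|u|^{p+1}$. Second, multiplying by $(\Delta\Psi)u$ produces $\partial_t\big((\Delta\Psi)u\,u_t\big)$, a term $-(\Delta\Psi)|u_t|^2$... wait, more precisely $(\Delta\Psi)\big(|\nabla u|^2 - \partial_t(u u_t) + |u_t|^2\big)$ after an integration by parts on $u\Delta u$, plus a term $\tfrac12(\Delta^2\Psi)|u|^2$ from integrating $u\,\nabla(\Delta\Psi)\cdot\nabla u$ by parts, plus the nonlinear piece $(\Delta\Psi)|u|^{p+1}$. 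For our $\Psi$ one computes, in the region $|x|<R$, $\partial^2_{jk}\Psi = \delta_{jk}$ so the Hessian term is $|\nabla u|^2$ and $\Delta\Psi = 2$, $\Delta^2\Psi=0$; in the region $|x|>R$, $\Psi$ is radial with $\Psi' = R$, so $\partial^2_{jk}\Psi = \tfrac{R}{|x|}\big(\delta_{jk}-\tfrac{x_jx_k}{|x|^2}\big)$, giving Hessian term $\tfrac{R}{|x|}|\slashed\nabla u|^2$, $\Delta\Psi = R/|x|$, and $\Delta^2\Psi = R/|x|^3$ (up to sign), which is the source of the $|u|^2/|x|^3$ term. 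One then divides the whole identity by $R$ so that the exterior weights become $1/|x|$, $1/|x|^3$ etc.

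The remaining work is bookkeeping: collecting the pure time-derivative terms and completing the square to recognize them as the flux integrals at $t=t_1$ and $t=t_2$ written in the stated form involving $\big|\tfrac{|x|}{R}u_r + \tfrac{u}{2R} + (-1)^i u_t\big|^2$ and $\big|u_r + \tfrac{u}{2|x|} + (-1)^i u_t\big|^2$; the cross terms $2(-1)^i u_t(\nabla\Psi\cdot\nabla u)$ and $(-1)^i(\Delta\Psi)u u_t$ are exactly what is absorbed into these squares, and the leftover $\tfrac{R^2-|x|^2}{2R^2}|u_r|^2$, $\tfrac{3|u|^2}{8R^2}$, and $-\tfrac{|u|^2}{8|x|^2}$ are the residuals. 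The factor $2E$ on the right-hand side arises because $2u_t\nabla\Psi\cdot\nabla u + (\Delta\Psi)u u_t$ integrated at a single time, after completing the square, differs from $2\Psi'$ times... more simply, one checks directly that when $t_1\to-\infty$ or in general by the algebra the net coefficient of the conserved energy density is $2$. I expect the main obstacle to be the careful treatment of the non-smooth weight across $|x|=R$: one must verify that the distributional terms $\nabla(\Delta\Psi)$ and $\Delta^2\Psi$ contribute precisely the circle integral $\tfrac1{4R^2}\int\!\!\int_{|x|=R}|u|^2$ with the correct sign and constant, and that $u(\cdot,t)\in\dot H^1\cap L^{p+1}$ has a well-defined trace on $|x|=R$ for a.e.\ $t$ so that the approximation argument is legitimate; the energy-class regularity and the local well-posedness theory make this routine but it requires attention.
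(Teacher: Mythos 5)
Your overall strategy---a Morawetz multiplier of the form $\nabla\Psi\cdot\nabla u+(\text{zeroth order})\,u$, integration over the slab $\Rm^2\times[t_1,t_2]$, completion of squares at $t=t_1,t_2$, and a careful treatment of the circle $|x|=R$---is the same as the paper's, but the specific multiplier you chose does not yield the stated identity, and the discrepancy is not absorbed by ``bookkeeping.'' You take the zeroth-order coefficient to be $\tfrac12\Delta\Psi$, i.e.\ equal to $1$ on $|x|<R$; the paper instead uses $\tfrac12\Delta\Psi-\varphi$ with $\varphi=\tfrac12$ on $|x|\le R$ and $\varphi=0$ outside, i.e.\ the coefficient $\tfrac12$ on the ball. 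This correction matters in three places. First, with your choice the interior bulk term is $\int_{|x|<R}\bigl(|\nabla u|^2+\tfrac{p-1}{p+1}|u|^{p+1}\bigr)\,dx$ with no $|u_t|^2$ at all; the subtraction of $\varphi$ is precisely what trades half of the Hessian contribution $|\nabla u|^2$ for $\varphi|u_t|^2$ and shifts the coefficient $\tfrac{p-1}{p+1}$ to $\tfrac{p-3}{p+1}$, producing the first line of the proposition. Second, your coefficient $\tfrac12\Delta\Psi$ jumps from $1$ to $\tfrac12$ across $|x|=R$, so integrating $-u\,\Delta u$ against it on the two regions leaves an uncancelled boundary term $-\tfrac12\int_{|x|=R}u\,u_r\,d\sigma_R$ which is not present in, and cannot be absorbed into, the target identity; the point of subtracting $\varphi$ is that $\Delta\Psi-2\varphi=\min\{1,R/|x|\}$ is continuous across the circle, so these traces cancel and the only circle contribution is the stated $\tfrac1{4R^2}\int_{|x|=R}|u|^2\,d\sigma_R$, which arises from $\tfrac14\int_{|x|>R}\nabla(|u|^2)\cdot\bigl(-Rx/|x|^3\bigr)\,dx$ via the divergence theorem on the exterior region. (Relatedly, your claim that ``$\Delta\Psi$ contains a singular part supported on $|x|=R$'' is incorrect: $\nabla\Psi$ is Lipschitz and $\Delta\Psi$ is a bounded function with a jump discontinuity; the singular layers live in $\nabla\Delta\Psi$ and $\Delta^2\Psi$.) Third, with your multiplier the interior completed squares come out as $\bigl|\tfrac{|x|}{R}u_r+\tfrac{u}{R}\pm u_t\bigr|^2$ rather than the stated $\bigl|\tfrac{|x|}{R}u_r+\tfrac{u}{2R}\pm u_t\bigr|^2$, with correspondingly different residual terms, so the exact form of the third line is not recoverable from your computation.

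A secondary but genuine gap is the passage from nice data to general finite-energy solutions. In dimension $2$ Hardy's inequality fails, so even the finiteness of $\int_{|x|>R}|u|^2/|x|^2\,dx$ appearing in the boundary terms at $t_1,t_2$ is not automatic; the paper obtains it by H\"older's inequality against the $L^{p+1}$ norm, and then justifies all integrations by parts at spatial infinity by truncating the initial data, using finite speed of propagation and energy conservation, and letting the truncation radius tend to infinity. Your appeal to trace regularity and local well-posedness does not address either point.
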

\begin{remark} \label{upper bound of u2x2}
 If $u$ is a solution to (CP1) with a finite energy $E$, then we have 
 \begin{align*}
  \int_{|x|>R'} \frac{|u|^2}{|x|^2} dx & \leq \left(\int_{|x|>R'} (|u|^2)^{\frac{p+1}{2}} dx \right)^{\frac{2}{p+1}} \left(\int_{|x|>R'} (|x|^{-2})^\frac{p+1}{p-1} dx \right)^{\frac{p-1}{p+1}} \\
  & \lesssim_p  (R')^{-\frac{4}{p+1}} \left(\int_{|x|>R'} |u|^{p+1} dx \right)^{\frac{2}{p+1}}\\
  & \lesssim_p (R')^{-\frac{4}{p+1}} E^{\frac{2}{p+1}}.
 \end{align*}
 Thus the integrals in the inequality above are all finite.  
\end{remark}
\begin{proof}
We follow a similar argument to the one given by Perthame and Vega in the final section of their work \cite{benoit}. Some terms in 2-dimensional case come with a different sign from the higher dimensional case thus we have to work more carefully. Let us first consider solutions with compact support. We will calculate as though the solutions are sufficiently smooth, otherwise smooth approximation techniques can be applied. Given a positive constant $R$, we define two radial functions $\Psi$ and $\varphi$ by
 \begin{align*}
  &\nabla \Psi = \left\{\begin{array}{ll} x, & \hbox{if} \; |x|\leq R; \\ Rx/|x|, & \hbox{if}\; |x|\geq R;\end{array}\right.&
  &\varphi = \left\{\begin{array}{ll} 1/2, & \hbox{if} \; |x|\leq R; \\ 0, & \hbox{if}\; |x|> R.\end{array}\right.&
 \end{align*}
 Since $u$ is defined for all time $t \in \Rm$, we may also define a function on $\Rm$
 \[
  \mathcal{E}(t) = \int_{\Rm^2}  u_t(x,t) \left(\nabla u(x,t)\cdot \nabla \Psi + u(x,t)\left(\frac{\Delta \Psi}{2} - \varphi\right)\right) dx.
 \]
 We may differentiate $\mathcal{E}$, utilize the equation $u_{tt} - \Delta u =  -|u|^{p-1}u$, apply integration by parts and obtain
 \begin{align*}
   -\mathcal{E}'(t) & = \int_{\Rm^2} \left(\sum_{i,j=1}^2 u_i \Psi_{ij} u_j-\varphi |\nabla u|^2 + \varphi |u_t|^2 \right) dx + \frac{1}{4} \int_{\Rm^2} \nabla (|u|^2)\cdot  \nabla \left(\Delta \Psi - 2\varphi\right) dx\\
   & \qquad \qquad + \int_{\Rm^2} |u|^{p+1}\left(\frac{p-1}{2(p+1)}\Delta \Psi - \varphi\right) dx = I_1 + I_2 + I_3.
 \end{align*}
 Here we have
 \begin{align*}
  &\Psi_{ij} = \left\{\begin{array}{ll} \delta_{ij}, & \hbox{if} \; |x|< R; \\ \frac{R\delta_{ij}}{|x|} - \frac{R x_i x_j}{|x|^3}, & \hbox{if}\; |x|> R;\end{array}\right.&
  &\Delta \Psi = \left\{\begin{array}{ll} 2, & \hbox{if} \; |x|< R; \\ R/|x|, & \hbox{if}\; |x|> R;\end{array}\right.&
 \end{align*}
 \begin{align*}
  \Delta \Psi - 2\varphi = \left\{\begin{array}{ll} 1, & \hbox{if} \; |x|\leq R; \\ R/|x|, & \hbox{if}\; |x|\geq R;\end{array}\right. \in C(\Rm^2).
 \end{align*}
when $|x|>R$, we may calculate
\begin{align*}
 \sum_{i,j=1}^2 u_i \Psi_{ij} u_j = \sum_{i,j=1}^2 u_i \left(\frac{R\delta_{ij}}{|x|} - \frac{R x_i x_j}{|x|^3}\right) u_j
= \frac{R}{|x|}|\nabla u|^2 - \frac{R|\nabla u \cdot x|^2}{|x|^3}
= \frac{R}{|x|}|\slashed{\nabla} u|^2.
\end{align*} 
Thus we have 
\begin{align}
 I_1 =  \frac{1}{2} \int_{|x|<R} \!\!\left(|\nabla u|^2 + |u_t|^2\right) dx + R\int_{|x|>R} \frac{|\slashed{\nabla} u|^2}{|x|} dx. \label{Morawetz contribution 1}
\end{align}
A basic computation shows
\begin{equation}
 I_3 = \frac{p-3}{2(p+1)}\int_{|x|<R} |u|^{p+1} dx  + \frac{(p-1)R}{2(p+1)} \int_{|x|>R} \frac{|u|^{p+1}}{|x|} dx. \label{Morawetz contribution 2}
\end{equation}
Finally let us calculate $I_2$ carefully 
\begin{align}
 I_2 & = \frac{1}{4}\int_{\Rm^2} \nabla (|u|^2) \cdot \nabla \left(\Delta \Psi - 2\varphi\right) dx \nonumber\\
 & = \frac{1}{4}\int_{|x|>R} \nabla (|u|^2) \cdot \frac{-Rx}{|x|^3} dx \nonumber\\
 & = \frac{1}{4} \int_{|x|>R} \left[\hbox{div}\left(|u|^2 \cdot \frac{-Rx}{|x|^3}\right) - \frac{R}{|x|^3} |u|^2\right]dx \nonumber\\
 & = \frac{1}{4R} \int_{|x|=R} |u|^2 d\sigma_R(x) -\frac{R}{4} \int_{|x|>R} \frac{|u|^2}{|x|^3} dx. \label{Morawetz contribution 3}
\end{align}
Since $-\mathcal{E}'(t) = I_1 + I_2 + I_3$, we have 
\begin{equation}
 \int_{t_1}^{t_2} (I_1+I_2+I_3) dt = \mathcal{E}(t_1) - \mathcal{E}(t_2). \label{Morawetz integral identity}
\end{equation}
We may rewrite $\mathcal{E}(t_1)$ in the form of
\begin{align*}
 R \mathcal{E}(t_1) &= \frac{1}{2}\int_{\Rm^2} \left( R^2|u_t(x,t_1)|^2 +  \left|\nabla u(x,t_1)\cdot \nabla \Psi + u(x,t_1)\left(\frac{\Delta \Psi}{2} - \varphi\right)\right|^2 \right) dx\\
 & \qquad - \frac{1}{2} \int_{\Rm^2} \left|\nabla u(x,t_1)\cdot \nabla \Psi + u(x,t_1)\left(\frac{\Delta \Psi}{2} - \varphi\right) - R u_t(x,t_1)\right|^2 dx\\
 & = J_1 - J_2
\end{align*}
We then calculate $J_1, J_2$ carefully 
\begin{align*}
 J_1 & = \frac{1}{2}\int_{\Rm^2} \left(R^2 |u_t|^2 + |\nabla u\cdot \nabla \Psi|^2 + \left(\frac{\Delta \Psi}{2} - \varphi\right) \nabla (|u|^2) \cdot \nabla \Psi + \left(\frac{\Delta \Psi}{2} - \varphi\right)^2 |u|^2 \right) dx\\
 & = \frac{1}{2} \int_{\Rm^2} \left[R^2 |u_t|^2 + |\nabla u\cdot \nabla \Psi|^2 - \hbox{div} \left(\left(\frac{\Delta \Psi}{2} - \varphi\right)\nabla \Psi \right) |u|^2 + \left(\frac{\Delta \Psi}{2} - \varphi\right)^2 |u|^2\right] dx
\end{align*}
A basic calculation shows 
\[
 \hbox{div} \left(\left(\frac{\Delta \Psi}{2} - \varphi\right)\nabla \Psi \right) = 
 \left\{\begin{array}{ll} 1, & \hbox{if} \; |x|< R; \\ 0, & \hbox{if}\; |x|> R;\end{array}\right. 
\]
Thus we have 
\begin{align*}
 J_1  = & \frac{1}{2} \int_{|x|<R} \left[R^2 |u_t|^2 + |x\cdot \nabla u|^2 -\frac{3}{4} |u|^2\right] dx + \frac{1}{2} \int_{|x|>R} \left[R^2 |u_t|^2 + R^2|u_r|^2 + \frac{R^2 |u|^2}{4|x|^2}\right] dx \\
  = & R^2 E  - R^2 \int_{|x|<R} \left[\frac{R^2-|x|^2}{2R^2} |u_r|^2 + \frac{3}{8R^2} |u|^2 + \frac{1}{2}|\slashed{\nabla} u|^2 + \frac{1}{p+1} |u|^{p+1}\right] dx\\
 & + \frac{R^2}{8} \int_{|x|>R} \frac{|u|^2}{|x|^2} dx - R^2 \int_{|x|>R} \left(\frac{1}{2}|\slashed{\nabla} u|^2 + \frac{1}{p+1} |u|^{p+1}\right) dx
\end{align*}
In addition we have 
\[
 J_2 =  \frac{1}{2} \int_{|x|<R} \left|x\cdot \nabla u + \frac{1}{2} u - R u_t\right|^2 dx + \frac{R^2}{2} \int_{|x|>R} \left|\frac{x}{|x|} \cdot\nabla u + \frac{u}{2|x|} - u_t\right|^2 dx. 
\]
Combining $J_1, J_2$, we obtain 
\begin{align*}
 R \mathcal{E}(t_1) = & R^2 E  - R^2 \int_{|x|>R} \left(\frac{1}{2} \left|u_r \!+\! \frac{u}{2|x|} \!-\! u_t\right|^2 +\frac{|\slashed{\nabla} u|^2}{2} + \frac{|u|^{p+1}}{p+1} - \frac{|u|^2}{8|x|^2}\right) dx\\
 & -\! R^2 \int_{|x|<R} \!\left[\frac{R^2\!-\! |x|^2}{2R^2} |u_r|^2 \!+\! \frac{1}{2}\left|\frac{|x|}{R} u_r \!+\! \frac{u}{2R} \!-\! u_t\right|^2 \!+\! \frac{|\slashed{\nabla} u|^2}{2} \!+\! \frac{|u|^{p+1}}{p+1} \!+\!\frac{3|u|^2}{8R^2} \right]\! dx.
\end{align*}
Finally we may find a similar expression of $-R \mathcal{E}(t_2)$ 
\begin{align*}
 -R \mathcal{E}(t_2) = & R^2 E  - R^2 \int_{|x|>R} \left(\frac{1}{2} \left|u_r \!+\! \frac{u}{2|x|} \!+\! u_t\right|^2 +\frac{|\slashed{\nabla} u|^2}{2} + \frac{|u|^{p+1}}{p+1} - \frac{|u|^2}{8|x|^2}\right) dx\\
 & -\! R^2 \int_{|x|<R} \!\left[\frac{R^2\!-\! |x|^2}{2R^2} |u_r|^2 \!+\! \frac{1}{2}\left|\frac{|x|}{R} u_r \!+\! \frac{u}{2R} \!+\! u_t\right|^2 \!+\! \frac{|\slashed{\nabla} u|^2}{2} \!+\! \frac{|u|^{p+1}}{p+1} \!+\!\frac{3|u|^2}{8R^2} \right]\! dx,
\end{align*}
then plug all the expressions of $I_1, I_2, I_3$ and $\mathcal{E}(t_1), \mathcal{E}(t_2)$ into the integral identity \eqref{Morawetz integral identity} to finish the proof if the solution is compactly supported. In order to deal with general solutions $u$, we fix a smooth radial cut-off function $\phi: \Rm^2 \rightarrow [0,1]$ so that 
\[
 \phi(x) = \left\{\begin{array}{ll} 1, & \hbox{if} \; |x|\leq 1; \\ 0, & \hbox{if}\; |x| > 2;\end{array}\right.
\]
define initial data $(u_{0,R'}(x),u_{1,R'}(x)) = \phi(x/R') (u(x,t_1), u_t(x,t_1))$ and consider the corresponding solution $u_{R'}$ to (CP1). The argument above shows that $u_{R'}$ satisfies the Morawetz identity. We observe
\begin{itemize}
 \item The identity $u_{R'}(x,t) = u(x,t)$ holds if $|x|<R'+t_1-t$ by finite speed of propagation;
 \item $E(u_{0,R'}, u_{1,R'}) \rightarrow E$ as $R' \rightarrow \infty$. The integral of $|u(x,t_1)|^2/|x|^2$ can be dealt with by Remark \ref{upper bound of u2x2};
 \item The energies of $u_{R'}$ and $u$ in the region where $u_{R'} \neq u$ both converge  to zero as $R'\rightarrow +\infty$ by finite speed of propagation and energy conservation law. 
 \end{itemize}
These facts enable us to take the limit $R'\rightarrow +\infty$ and prove the Morawetz identity for general solutions $u$ without compact support. 
\end{proof}

\subsection{Morawetz Inequalities}

A combination of Morawetz identity and finite speed of propagation gives a few useful inequalities, which will be my main tool in the rest of this paper. The key observation here is that if $R$ is large, the first term in the Morawetz identity is almost $2E$ when $t_1\leq -R$ and $t_2 \leq R$, thus all other terms must be small.  

\begin{corollary} \label{energy dis}
 Let $u$ be a solution to (CP1) with initial data $(u_0,u_1)\in (\dot{H}^1(\Rm^2) \cap L^{p+1}(\Rm^2))\times L^2(\Rm^2)$. Given any $R>0$, $r\geq 0$, $0\leq \mu_1\leq \frac{2(p-1)}{p+1}$ and $0\leq \mu_2\leq \frac{1}{p+1}$ we have  
\begin{align*}
 \sum_{j=1}^6 M_j \leq & \int_{\Rm^2} \min\{|x|/R, 1\} \left(|\nabla u_0|^2 + |u_1|^2 + \frac{2}{p+1}|u_0|^{p+1}\right) dx\\
 & + \sum_{\pm} \int_{|x|>R} \left.\left(\frac{9|u|^2}{8|x|^2} - \lambda_2 |u|^{p+1}\right)\right|_{t=\pm (R+r)} dx + \int_{-R-r}^{R+r} \int_{|x|>R} \left(\frac{5|u|^2}{4|x|^3}-\frac{\lambda_1 |u|^{p+1}}{|x|}\right)dx dt.
\end{align*}
The notations $M_j$ represent
\begin{align*}
 M_1 & = \frac{1}{2R} \int_{R<|t|<R+r} \int_{|x|<R} \left(|\nabla u|^2 + |u_t|^2+\frac{p-3}{p+1}|u|^{p+1}\right) dx dt;\\
 M_2 & = \frac{p-5}{2(p+1)R} \int_{-R}^R \int_{|x|<R} |u|^{p+1} dx dt;\\
 M_3 & = \frac{1}{4R^2} \int_{-R-r}^{R+r} \int_{|x|=R} |u|^2 d\sigma_R(x)dt;\\
 M_4 & = \int_{-R-r}^{R+r} \int_{|x|>R} \left(\frac{|\slashed{\nabla} u|^2}{|x|} + \frac{\mu_1 |u|^{p+1}}{|x|} + \frac{|u|^2}{|x|^3}\right) dx dt;\\
 M_5 & = \sum_{\pm} \int_{|x|<R} \! \left. \left(\frac{R^2 \!-\! |x|^2}{2R^2} |u_r|^2 \!+\! \frac{1}{2}\left|\frac{|x|}{R}u_r \!+\!  \frac{u}{2R}  \!\pm \! u_t\right|^2 \!+\! \frac{3|u|^2}{8R^2} \!+\! \frac{|\slashed{\nabla}u|^2}{2} \!+\! \frac{|u|^{p+1}}{p+1} \right)\right|_{t=\pm (R+r)} \!\! dx;\\
 M_6 & = \sum_{\pm} \int_{|x|>R} \left.\left(\frac{1}{2}\left|u_r + \frac{u}{2|x|} \pm u_t\right|^2 +\frac{|\slashed{\nabla}u|^2}{2} + \mu_2 |u|^{p+1} + \frac{|u|^2}{|x|^2}\right)\right|_{t=\pm (R+r)} dx.
\end{align*}
The constants $\lambda_i$'s are defined by $\lambda_1 = \frac{2(p-1)}{p+1} - \mu_1$ and $\lambda_2 = \frac{1}{p+1} - \mu_2$. Moreover, if $\lambda_1,\lambda_2>0$, then there exists a constant $C= C(p,\mu_1,\mu_2)$ so that
\begin{equation} \label{energy dis 2}
  \sum_{j=1}^6 M_j \leq \int_{\Rm^2} \min\{|x|/R, 1\} \left(|\nabla u_0|^2 + |u_1|^2 + \frac{2}{p+1}|u_0|^{p+1}\right) dx + C (R+r) R^{-1-\frac{4}{p-1}}. 
\end{equation}
\end{corollary}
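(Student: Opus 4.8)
The plan is to specialize the Morawetz identity of the preceding proposition to the times $t_1=-(R+r)$, $t_2=R+r$ (global existence with finite energy being guaranteed for data in $(\dot{H}^1\cap L^{p+1})\times L^2$, and the weighted integrals being finite by Remark~\ref{upper bound of u2x2}), and to reorganize its terms into the combination $\sum_{j=1}^6 M_j$ plus a remainder that can be controlled. Concretely, I would first break the leading space-time integral $\frac{1}{2R}\int_{t_1}^{t_2}\int_{|x|<R}\big(|\nabla u|^2+|u_t|^2+\frac{p-3}{p+1}|u|^{p+1}\big)\,dx\,dt$ over the three subintervals $[-(R+r),-R]$, $[-R,R]$, $[R,R+r]$. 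The two outer pieces are exactly $M_1$. On $[-R,R]$ I would use the pointwise identity $|\nabla u|^2+|u_t|^2+\frac{p-3}{p+1}|u|^{p+1}=2e(x,t)+\frac{p-5}{p+1}|u|^{p+1}$ together with $\int_{|x|<R}e(x,t)\,dx=E-\int_{|x|>R}e(x,t)\,dx$; this contributes $M_2+2E-\frac1R\int_{-R}^{R}\int_{|x|>R}e(x,t)\,dx\,dt$.

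The annular term of the identity is already $M_3$. In the remaining three terms — the exterior space-time term and the two families of boundary terms at $t=t_i$ — I would add and subtract $M_4$, $M_5$, $M_6$: the boundary integrand over $|x|<R$ coincides verbatim with that of $M_5$ (the sign convention $(-1)^i u_t$ at $t=t_i$ matches the $\pm u_t$ at $t=\pm(R+r)$), while the exterior space-time term and the exterior boundary terms reproduce $M_4$ and $M_6$ together with correction integrands which are precisely those appearing on the right-hand side of the claimed inequality, $\lambda_1,\lambda_2$ being as in the statement. Cancelling the $2E$ produced above against the $2E$ on the right-hand side of the identity and transposing the correction integrands yields
\[
 \sum_{j=1}^{6} M_j = \frac{1}{R}\int_{-R}^{R}\int_{|x|>R} e(x,t)\,dx\,dt + \sum_{\pm}\int_{|x|>R}\left.\left(\frac{9|u|^2}{8|x|^2} - \lambda_2|u|^{p+1}\right)\right|_{t=\pm(R+r)}dx + \int_{-R-r}^{R+r}\int_{|x|>R}\left(\frac{5|u|^2}{4|x|^3} - \frac{\lambda_1|u|^{p+1}}{|x|}\right)dx\,dt .
\]
It then remains to control $\frac1R\int_{-R}^{R}\int_{|x|>R}e(x,t)\,dx\,dt$ by the initial data, and here I would use the exterior energy-flux inequality: from $\partial_t e=\nabla\!\cdot(u_t\nabla u)$ and $e+u_ru_t=\frac12(u_r+u_t)^2+\frac12|\slashed{\nabla}u|^2+\frac1{p+1}|u|^{p+1}\ge 0$ (this is exactly where the defocusing sign enters), the map $t\mapsto\int_{|x|>\rho+t}e(x,t)\,dx$ is non-increasing for $t\ge0$, and by time reversal a symmetric statement holds for $t\le0$. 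Taking $\rho=R-|t|$ gives $\int_{|x|>R}e(x,t)\,dx\le\int_{|x|>R-|t|}e(x,0)\,dx$ for $|t|\le R$, and a Fubini/layer-cake computation identifies $\frac1R\int_{-R}^{R}\int_{|x|>R-|t|}e(x,0)\,dx\,dt$ with $2\int_{\Rm^2}\min\{|x|/R,1\}\,e(x,0)\,dx$, i.e. with $\int_{\Rm^2}\min\{|x|/R,1\}\big(|\nabla u_0|^2+|u_1|^2+\frac{2}{p+1}|u_0|^{p+1}\big)\,dx$. Substituting this into the displayed identity proves the first assertion of the corollary. I expect this flux inequality — establishing exterior finite speed of propagation with the correct nonnegative-flux sign for the defocusing nonlinearity — to be the only genuinely substantive step; the term-by-term repackaging above and the integrations below are mechanical.

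For \eqref{energy dis 2}, assume $\lambda_1,\lambda_2>0$ and use the pointwise dichotomy anticipated in the introduction. Writing $\frac{5|u|^2}{4|x|^3}-\frac{\lambda_1|u|^{p+1}}{|x|}=\frac{|u|^2}{|x|^3}\big(\frac54-\lambda_1|u|^{p-1}|x|^2\big)$, this is $\le0$ when $|u|^{p-1}|x|^2\ge\frac{5}{4\lambda_1}$, and otherwise $|u|^2\le(\frac{5}{4\lambda_1})^{2/(p-1)}|x|^{-4/(p-1)}$; hence the quantity is at most $C(p,\mu_1)|x|^{-3-4/(p-1)}$ pointwise, and integrating over $|x|>R$ and over $t\in[-R-r,R+r]$ gives a bound $\lesssim_{p,\mu_1}(R+r)R^{-1-4/(p-1)}$. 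The same argument applied to $\frac{9|u|^2}{8|x|^2}-\lambda_2|u|^{p+1}$ produces the pointwise bound $C(p,\mu_2)|x|^{-2-4/(p-1)}$, whose integral over $|x|>R$ is $\lesssim_{p,\mu_2}R^{-4/(p-1)}\le(R+r)R^{-1-4/(p-1)}$ since $r\ge0$. Adding these estimates to the first inequality of the corollary yields \eqref{energy dis 2} with $C=C(p,\mu_1,\mu_2)$.
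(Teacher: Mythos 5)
Your proposal is correct and follows essentially the same route as the paper: apply the Morawetz identity with $t_1=-(R+r)$, $t_2=R+r$, move the correction integrands $\frac{5|u|^2}{4|x|^3}-\frac{\lambda_1|u|^{p+1}}{|x|}$ and $\frac{9|u|^2}{8|x|^2}-\lambda_2|u|^{p+1}$ to the right-hand side to complete $M_4$ and $M_6$, split the interior space-time term into $M_1+M_2$ plus the full local energy, bound $2E$ minus that local energy by $\frac1R\int_{-R}^R\int_{|x|>R}e(x,t)\,dx\,dt$ and then by the initial data via exterior finite speed of propagation, and finally prove \eqref{energy dis 2} by the pointwise dichotomy on $|u|$ versus $|x|^{-2/(p-1)}$. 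The only difference is that you spell out the nonnegative-flux justification and the dichotomy computation in more detail than the paper does; both are correct.
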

\begin{remark}
If $p \geq 5$ is conformal or super-conformal, then all the terms $M_j$ are nonnegative. If $3\leq p <5$, then all $M_j$'s except for $M_2$ are nonnegative. The terms $M_5$ and $M_6$ are the most relevant to the energy distribution at large time and will be used in later sections. Some of $M_j$'s will not be used in this work but we still give them here for completeness. 
\end{remark}
\begin{proof}
We first choose $t_1=-R-r$, $t_2=R+r$ in the Morawetz identity, add
\[ 
  \int_{-R-r}^{R+r} \int_{|x|>R} \left(\frac{5|u|^2}{4|x|^3} - \frac{\lambda_1 |u|^{p+1}}{|x|}\right) dx dt + \sum_{i=1,2} \int_{|x|>R} \left(\frac{9|u(x,t_i)|^2}{8|x|^2} - \lambda_2 |u(x,t_i)|^{p+1}\right)  dx 
\]
in both sides (here $\lambda_1 = \frac{2(p-1)}{p+1} - \mu_1\geq 0$, $\lambda_2 = \frac{1}{p+1} - \mu_2 \geq  0$), and obtain 
\begin{align*}
 &\frac{1}{2R} \int_{-R-r}^{R+r} \int_{|x|<R} \left(|\nabla u|^2 + |u_t|^2+\frac{p-3}{p+1}|u|^{p+1}\right) dx + \sum_{j=3}^6 M_j \\
 & = 2E + \sum_{\pm} \int_{|x|>R} \left.\left(\frac{9|u|^2}{8|x|^2} - \lambda_2 |u|^{p+1}\right)\right|_{t=\pm (R+r)} dx + \int_{-R-r}^{R+r} \int_{|x|>R} \left(\frac{5|u|^2}{4|x|^3}-\frac{\lambda_1 |u|^{p+1}}{|x|}\right)dx. 
\end{align*}
The first term above can be written as a sum of three terms
\begin{align*}
& \frac{1}{2R} \int_{-R-r}^{R+r} \int_{|x|<R} \left(|\nabla u|^2 + |u_t|^2+\frac{p-3}{p+1}|u|^{p+1}\right) dx\\
 = & M_1 + \frac{1}{2R} \int_{-R}^{R} \int_{|x|<R} \left(|\nabla u|^2 + |u_t|^2+\frac{p-3}{p+1}|u|^{p+1}\right) dx\\
 = & M_1 + M_2 + \frac{1}{2R} \int_{-R}^{R} \int_{|x|<R} \left(|\nabla u|^2 + |u_t|^2+\frac{2}{p+1}|u|^{p+1}\right) dx
\end{align*}
Thus
\begin{align}
 & \frac{1}{2R} \int_{-R}^{R}  \int_{|x|<R} \left(|\nabla u|^2 + |u_t|^2+\frac{2}{p+1}|u|^{p+1}\right) dx dt + \sum_{j=1}^6 M_j \label{energy dis mid}\\
 & = 2E + \sum_{\pm} \int_{|x|>R} \left.\left(\frac{9|u|^2}{8|x|^2} - \lambda_2 |u|^{p+1}\right)\right|_{t=\pm (R+r)} dx + \int_{-R-r}^{R+r} \int_{|x|>R} \left(\frac{5|u|^2}{4|x|^3}-\frac{\lambda_1 |u|^{p+1}}{|x|}\right)dx. \nonumber
\end{align}
In order to prove the first inequality we only need to show
\begin{align*}
 I = 2E - \frac{1}{2R} \int_{-R}^{R} & \int_{|x|<R} \left(|\nabla u|^2 + |u_t|^2+\frac{2}{p+1}|u|^{p+1}\right) dx dt\\
  & \leq \int_{\Rm^2} \min\{|x|/R, 1\} \left(|\nabla u_0|^2 + |u_1|^2 + \frac{2}{p+1}|u_0|^{p+1}\right) dx.
\end{align*}
This immediately follows energy conservation law and finite speed of propagation of energy
 \begin{align*}
 I =  & \frac{1}{R}\int_{-R}^R \int_{|x|>R} \left(\frac{1}{2}|\nabla u|^2 + \frac{1}{2}|u_t|^2 + \frac{1}{p+1}|u|^{p+1}\right) dx dt \\
  \leq & \frac{1}{R} \int_{-R}^R \int_{|x|>R-|t|} \left(\frac{1}{2}|\nabla u_0|^2 + \frac{1}{2}|u_1|^2 + \frac{1}{p+1}|u_0|^{p+1}\right) dx dt\\
  = & \frac{1}{R} \int_{\Rm^2} \min\{|x|, R\} \left(|\nabla u_0|^2 + |u_1|^2 + \frac{2}{p+1}|u_0|^{p+1}\right) dx.
 \end{align*}
Finally if $\lambda_1, \lambda_2 > 0$, we need to find an upper bound of the integrals
\[
 \sum_{\pm} \int_{|x|>R} \left.\left(\frac{9|u|^2}{8|x|^2} - \lambda_2 |u|^{p+1}\right)\right|_{t=\pm (R+r)} dx + \int_{-R-r}^{R+r} \int_{|x|>R} \left(\frac{5|u|^2}{4|x|^3}-\frac{\lambda_1 |u|^{p+1}}{|x|}\right)dx.
\]
If $u \gg |x|^{-2/(p-1)}$, then $ |u|^2/|x|^2 \ll |u|^{p+1}$, Thus we always have 
\begin{align*}
 &\frac{9|u|^2}{8|x|^2} - \frac{\lambda_2 |u|^{p+1}}{p+1} \lesssim_{p,\mu_2} |x|^{-2-\frac{4}{p-1}};& &\frac{5|u|^2}{4|x|^3} -  \frac{\lambda_1 |u|^{p+1}}{|x|} \lesssim_{p,\mu_1} |x|^{-3-\frac{4}{p-1}}.&
\end{align*}
Plugging these upper bounds in the integrals above, we obtain 
\begin{align*}
 \sum_{\pm} \int_{|x|>R} \left.\left(\frac{|9u|^2}{8|x|^2} - \lambda_2 |u|^{p+1}\right)\right|_{t=\pm (R+r)} dx + \int_{-R-r}^{R+r} \int_{|x|>R} & \left(\frac{5|u|^2}{4|x|^3}-\frac{\lambda_1 |u|^{p+1}}{|x|}\right)dx\\
 & \lesssim_{p,\mu_1,\mu_2} (R+r) R^{-1-\frac{4}{p-1}}.
\end{align*}
This immediately proves inequality \eqref{energy dis 2} and finishes the proof.
\end{proof}

\section{Energy Distribution}

In this section we prove Theorem \ref{main 1}. It suffices to consider the positive time direction $t>0$, since the wave equation is time-reversible. We start with the conformal and super-conformal case $p\geq 5$. 

\subsection{Conformal and Super-conformal Case}

The proof mainly relies on 
\begin{lemma} \label{lemma p 5}
 Assume $p\geq 5$. Let $u$ be a finite-energy to (CP1). Then the following inequalities hold for large time $t>0$.
\begin{align*}
 \int_{|x|<t} & \frac{t-|x|}{t} e(x,t) dx + \int_{\Rm^2} \left(|u_r+u_t|^2 + |\slashed{\nabla} u|^2 + |u|^{p+1}\right) dx\\
 & \lesssim_p \int_{\Rm^2}\min\{|x|/t, 1\} e(x,0) dx
  + \sum_{\pm} \int_{|x|>t} \frac{|u(x,\pm t)|^2}{|x|^2} dx + \int_{-t}^{t} \int_{|x|>t} \frac{|u(x,t')|^2}{|x|^3} dx dt';\\
\int_{|x|<t} & \frac{t-|x|}{t} e(x,t) dx + \int_{\Rm^2} \left(|u_r+u_t|^2 + |\slashed{\nabla} u|^2 + |u|^{p+1}\right) dx \lesssim_p \int_{\Rm^2}\min\{|x|/t, 1\} e(x,0) dx + t^{-\frac{4}{p-1}}.
\end{align*} 
\end{lemma}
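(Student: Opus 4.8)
The plan is to feed the Morawetz inequality of Corollary~\ref{energy dis} with the single choice $R=t$, $r=0$, and then to recognize the left-hand side of the Lemma inside the boundary terms $M_5$ and $M_6$. For the first claimed inequality I would take $\mu_1=\frac{2(p-1)}{p+1}$ and $\mu_2=\frac{1}{p+1}$, so that $\lambda_1=\lambda_2=0$ and the extra integrals on the right-hand side of Corollary~\ref{energy dis} are precisely $\sum_{\pm}\int_{|x|>t}\frac{9|u(x,\pm t)|^2}{8|x|^2}\,dx+\int_{-t}^{t}\int_{|x|>t}\frac{5|u(x,t')|^2}{4|x|^3}\,dx\,dt'$; since $\min\{|x|/t,1\}\big(|\nabla u_0|^2+|u_1|^2+\tfrac{2}{p+1}|u_0|^{p+1}\big)=2\min\{|x|/t,1\}\,e(x,0)$, this is exactly a constant multiple of the right-hand side of the first inequality (the $|u|^2/|x|^2$ integrals being finite by Remark~\ref{upper bound of u2x2}). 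For the second inequality I would instead pick $\mu_1,\mu_2$ strictly below their maximal values, so that $\lambda_1,\lambda_2>0$, and invoke \eqref{energy dis 2}, whose error term $C(R+r)R^{-1-\frac{4}{p-1}}$ collapses to $C\,t^{-\frac{4}{p-1}}$ when $R=t$, $r=0$.

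Since $p\ge 5$ is conformal or super-conformal every $M_j$ is nonnegative, so $M_5+M_6\le\sum_{j=1}^{6}M_j$ is bounded by the right-hand sides above, and one may further discard from $M_5$ and $M_6$ the summands evaluated at time $-t$. It then remains to show that the summands of $M_5$ and $M_6$ evaluated at time $t$ control $\int_{|x|<t}\frac{t-|x|}{t}e(x,t)\,dx+\int_{\Rm^2}\big(|u_r+u_t|^2+|\slashed{\nabla}u|^2+|u|^{p+1}\big)\,dx$, which, after splitting the space integral at $|x|=t$, is purely a matter of pointwise algebra. On $\{|x|>t\}$ it follows from $|u_r+u_t|^2\le 2\big|u_r+\tfrac{u}{2|x|}+u_t\big|^2+\tfrac{|u|^2}{2|x|^2}$, all of whose terms (together with $|u|^{p+1}$, since $\mu_2>0$) appear in $M_6$. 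On $\{|x|<t\}$, writing $a=|x|/t\in[0,1)$ and $w=a u_r+\tfrac{u}{2t}+u_t$, I would expand $u_t$ and $u_r+u_t$ in terms of $w$, $u_r$ and $u$ and use the elementary inequalities $1-a\le 1$, $(1-a)^2\le 1-a^2$, $(1-a)a^2\le 1-a^2$ on $[0,1]$, together with $\frac{t-|x|}{t}\le 1$ and $\frac{t-|x|}{t}\le\frac{2(t^2-|x|^2)}{t^2}$, to bound $\frac{t-|x|}{t}\,e(x,t)+|u_r+u_t|^2$ by a fixed multiple of $\frac{t^2-|x|^2}{2t^2}|u_r|^2+\frac12|w|^2+\frac{3|u|^2}{8t^2}+\frac{|\slashed{\nabla}u|^2}{2}+\frac{|u|^{p+1}}{p+1}$, which is precisely the integrand of the time-$t$ summand of $M_5$. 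Integrating these two pointwise bounds over the respective regions and adding then yields the Lemma.

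The analytic input is already packaged in Corollary~\ref{energy dis}, so I do not expect a genuine obstacle; the one step that needs care is the last one, namely verifying term by term that the weighted energy density $\frac{t-|x|}{t}e(x,t)$ and the full ``inward'' density $|u_r+u_t|^2+|\slashed{\nabla}u|^2+|u|^{p+1}$ are dominated by the perfect-square combinations occurring in $M_5$ and $M_6$. One must also use the hypothesis $p\ge 5$ here: it is exactly what makes the discarded terms $M_1,\dots,M_4$ nonnegative (for $3<p<5$ the term $M_2$ has the opposite sign), which is why the Lemma is stated only in the conformal and super-conformal range and the remaining range $3<p<5$ is handled separately.
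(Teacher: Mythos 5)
Your proposal follows essentially the same route as the paper: plug $R=t$, $r=0$ into Corollary \ref{energy dis} with $\mu_1=\frac{2(p-1)}{p+1}$, $\mu_2=\frac{1}{p+1}$ (so $\lambda_1=\lambda_2=0$) for the first inequality and with strictly smaller $\mu_i$ plus \eqref{energy dis 2} for the second, then use nonnegativity of all $M_j$ for $p\geq 5$ to keep only the time-$t$ parts of $M_5$ and $M_6$ and finish with the same pointwise comparisons of $\frac{t-|x|}{t}e(x,t)$ and $|u_r+u_t|^2$ against the perfect-square integrands. This matches the paper's argument in both structure and detail, so no further comment is needed.
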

\begin{proof}
 We choose $R=t$, $r= 0$, $\mu_1 = \frac{2(p-1)}{p+1}$ and $\mu_2 = \frac{1}{p+1}$ in Corollary \ref{energy dis}, then we have (the two integrals below are evaluated at time $t$)
\begin{align*}
  \int_{|x|<t} & \left(\frac{t^2 \!-\! |x|^2}{2t^2} |u_r|^2 \!+\! \frac{1}{2}\left|\frac{|x|}{t}u_r \!+\!  \frac{u}{2t}  \!+ \! u_t\right|^2 \!+\! \frac{3|u|^2}{8t^2} \!+\! \frac{|\slashed{\nabla}u|^2}{2} \!+\! \frac{|u|^{p+1}}{p+1} \right) dx \\
  & + \int_{|x|>t} \left(\frac{1}{2}\left|u_r + \frac{u}{2|x|} + u_t\right|^2 +\frac{|\slashed{\nabla}u|^2}{2} + \frac{1}{p+1} |u|^{p+1} + \frac{|u|^2}{|x|^2}\right) dx \lesssim_p \hbox{RH1}.
\end{align*}
Here RH1 represents the right hand side of the first inequality in Lemma \ref{lemma p 5}. Thus we may verify the first inequality by observing
\[
 |\nabla u|^2 + |u_t|^2 \lesssim_1 |u_r|^2 + \left|\frac{|x|}{t}u_r \!+\!  \frac{u}{2t}  \! + \! u_t\right|^2 + \frac{|u|^2}{t^2} + |\slashed{\nabla} u|^2, \quad \hbox{if} \; |x|<t;
\]
and 
\begin{align*}
 |u_r+u_t|^2 & \lesssim_1 \frac{(t-|x|)^2}{t^2} |u_r|^2 + \left|\frac{|x|}{t}u_r \!+\!  \frac{u}{2t}  \! + \! u_t\right|^2 + \frac{|u|^2}{t^2}, & &\hbox{if}\; |x|<t;\\
 |u_r+u_t|^2 & \lesssim_1 \left|u_r + \frac{u}{2|x|} + u_t\right|^2 + \frac{|u|^2}{|x|^2}, & & \hbox{if}\; |x|>t.
\end{align*}
The second inequality can be proved in the same way by choosing $R=t$, $r= 0$, $\mu_1 = \frac{p-1}{p+1}$, $\mu_2 = \frac{1}{2(p+1)}$ in Corollary \ref{energy dis} and using inequality \eqref{energy dis 2}. 
\end{proof}

\paragraph{Proof of Part (a)(b)} It is clear that Part (a) and (b) immediately follows the second inequality given in Lemma \ref{lemma p 5}. We only need to let $t\rightarrow +\infty$ and apply the dominated convergence theorem.  

\paragraph{Proof of part (c), smaller decay rate} If $\kappa \in (0,\frac{4}{p-1})$ and $E_\kappa (u_0,u_1)< +\infty$ , then we may apply Lemma \ref{lemma p 5} again
\begin{align*}
  t^\kappa \int_{|x|<t} \frac{t-|x|}{t} e(x,t) dx + & t^\kappa \int_{\Rm^2}\left(|u_r+u_t|^2 + |\slashed{\nabla} u|^2 + |u|^{p+1}\right) dx \\
 & \lesssim_{p} \int_{\Rm^2}\min\{|x|t^{\kappa-1}, t^\kappa\} e(x,0) dx + t^{\kappa-\frac{4}{p-1}}.
\end{align*}
We observe the facts 
\begin{align*}
 &\min\{|x|t^{\kappa-1}, t^\kappa\} \leq |x|^\kappa,& &\lim_{t\rightarrow +\infty} |x|t^{\kappa-1} = 0,&
\end{align*}
then apply the dominated convergence theorem again and conclude that the upper bound found above converges to zero as $t\rightarrow +\infty$. This finishes the proof of part (c).
 
\begin{remark} We may slightly improve the decay estimate if decay rate $\kappa \in (0,\frac{4}{p-1})$. By Lemma \ref{lemma p 5} we have
\begin{align*}
 \int_1^\infty t^{\kappa-1} & \left(\int_{|x|<t} \frac{t-|x|}{t} e(x,t) dx + \int_{\Rm^2}\left(|u_r+u_t|^2 + |\slashed{\nabla} u|^2 + |u|^{p+1}\right) dx\right) dt\\
 & \lesssim_p \int_1^\infty \left(\int_{\Rm^2}\min\{|x|t^{\kappa-2}, t^{\kappa-1}\} e(x,0) dx + t^{\kappa-\frac{4}{p-1}-1} \right) dt\\
 & \lesssim_{p, \kappa} \int_{\Rm^2} |x|^\kappa e(x,0) dx + 1 < +\infty.
\end{align*}
\end{remark} 
 
\paragraph{Proof of Part (c), higher decay rate} Now we assume $\kappa \in [\frac{4}{p-1}, 1)$ and $E_\kappa (u_0,u_1) < +\infty$. We utilize the first inequality given in Lemma \ref{lemma p 5} and obtain 
\begin{align}
 \int_{|x|<t} & \frac{t-|x|}{t} e(x,t) dx + \int_{\Rm^2}\left(|u_r+u_t|^2 + |\slashed{\nabla} u|^2 + |u|^{p+1}\right) dx \nonumber\\
 & \lesssim_p \int_{\Rm^2}\min\{|x|/t, 1\} e(x,0) dx
  + \sum_{\pm} \int_{|x|>t} \frac{|u(x,\pm t)|^2}{|x|^2} dx + \int_{-t}^{t} \int_{|x|>t} \frac{|u(x,t')|^2}{|x|^3} dx dt' \label{part d inequality}
\end{align}
We have already known how to deal with the first integral in the right hand side above. Thus we still need to deal with the last two integrals. The upper bounds of these two integrals can be found by a combination of finite speed of propagation and a weighted Hardy inequality. We first prove two lemmata. 
\begin{lemma}[Weighted Hardy inequality]
Let $\kappa\in (0,1)$. If $v \in \dot{H}^1(\Rm^2)$ and $0\leq R_1\leq R$, then we have 
 \[ 
  \int_{|x|>R} \frac{|v(x)|^2}{|x|^2} dx \lesssim_\kappa \int_{|x|>R} (R^{-\kappa} - |x|^{-\kappa}) (|x|-R_1)^\kappa |\nabla v(x)|^2 dx.
 \]
\end{lemma}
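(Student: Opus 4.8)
The plan is to slice the two-dimensional integrals along rays through the origin, reducing the statement to a one-dimensional weighted Hardy inequality on the half-line $(R,\infty)$, and then to prove that one-dimensional inequality by a weighted Cauchy--Schwarz estimate whose constant, after a rescaling, turns out to depend only on $\kappa$. First I would reduce to the case that $v$ is smooth with compact support; this is also the only case actually used later, when the lemma is applied to $v=u(\cdot,t)$ for solutions with compactly supported data, so the remaining approximation to general $v\in\dot H^1(\Rm^2)$ can be treated lightly (the inequality being trivial when its right-hand side is infinite). Writing $x=r\Theta$ with $r=|x|$, $\Theta\in\mathbb{S}^1$, and using $dx=r\,dr\,d\Theta$ together with $|\partial_r v|\le|\nabla v|$, both sides decompose as integrals over $\mathbb{S}^1$ of one-dimensional integrals, so it suffices to prove, for each fixed $\Theta$ and $g(r)=v(r\Theta)$ (smooth, compactly supported on $[R,\infty)$),
\[
 \int_R^\infty \frac{g(r)^2}{r}\,dr \ \lesssim_\kappa\ \int_R^\infty \phi(r)\,g'(r)^2\,dr,\qquad \phi(r):=(R^{-\kappa}-r^{-\kappa})(r-R_1)^\kappa\,r,
\]
because multiplying $\phi(r)g'(r)^2$ by $dr$ and integrating against $d\Theta$ reproduces exactly $(R^{-\kappa}-|x|^{-\kappa})(|x|-R_1)^\kappa|\partial_r v|^2\,dx\le(R^{-\kappa}-|x|^{-\kappa})(|x|-R_1)^\kappa|\nabla v|^2\,dx$.

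For the one-dimensional inequality I would use $g(r)=-\int_r^\infty g'(s)\,ds$ and Cauchy--Schwarz with the weight $\phi$, which is positive on $(R,\infty)$ and satisfies $\phi(s)\sim R^{-\kappa}s^{1+\kappa}$ as $s\to\infty$, so that $\int_r^\infty\phi^{-1}<\infty$ for every $r>R$:
\[
 g(r)^2\le \Big(\int_r^\infty\frac{ds}{\phi(s)}\Big)\Big(\int_r^\infty\phi(s)g'(s)^2\,ds\Big)\le \Big(\int_r^\infty\frac{ds}{\phi(s)}\Big)\int_R^\infty\phi(s)g'(s)^2\,ds.
\]
Dividing by $r$, integrating over $r\in(R,\infty)$ and applying Tonelli to the resulting double integral $\int_R^\infty r^{-1}\int_r^\infty\phi(s)^{-1}\,ds\,dr$ (whose region is $R<r<s<\infty$), the problem collapses to estimating the single quantity
\[
 K:=\int_R^\infty\frac{1}{\phi(s)}\Big(\int_R^s\frac{dr}{r}\Big)ds=\int_R^\infty\frac{\ln(s/R)}{\phi(s)}\,ds,
\]
and one obtains $\int_R^\infty r^{-1}g(r)^2\,dr\le K\int_R^\infty\phi g'^2$.

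It remains to bound $K$ by a constant depending only on $\kappa$, uniformly in $R>0$ and $R_1\in[0,R]$; this is the step I expect to require the most care, and it is exactly where the scaling-homogeneity of the weight is used. Substituting $s=R\sigma$, $R_1=R\tau$ with $\tau\in[0,1]$ gives $\phi(R\sigma)=R(1-\sigma^{-\kappa})(\sigma-\tau)^\kappa\sigma$, whence
\[
 K=\int_1^\infty\frac{\ln\sigma}{(1-\sigma^{-\kappa})(\sigma-\tau)^\kappa\,\sigma}\,d\sigma\le \int_1^\infty\frac{\ln\sigma}{(1-\sigma^{-\kappa})(\sigma-1)^\kappa\,\sigma}\,d\sigma=:C_\kappa,
\]
using $(\sigma-\tau)^\kappa\ge(\sigma-1)^\kappa$ for $\sigma>1\ge\tau$; thus $R$ and $R_1$ have disappeared, and $C_\kappa<\infty$ once one inspects the two endpoints of the last integral: as $\sigma\to1^+$ the integrand is comparable to $\tfrac1\kappa(\sigma-1)^{-\kappa}$, which is integrable precisely because $\kappa<1$, and as $\sigma\to\infty$ it is comparable to $\sigma^{-1-\kappa}\ln\sigma$, which is integrable as well. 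Reassembling the angular slices then yields the stated inequality with implicit constant $C_\kappa$. The weighted Cauchy--Schwarz bound itself is routine; the genuinely delicate point is the uniformity of the constant, which without the rescaling would depend on $R$ and $R_1$, and it is the homogeneity built into the weight $(R^{-\kappa}-|x|^{-\kappa})(|x|-R_1)^\kappa$, together with the hypothesis $\kappa\in(0,1)$ controlling the behaviour at $\sigma=1$, that makes it universal.
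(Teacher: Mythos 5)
Your proof is correct and follows essentially the same route as the paper: polar slicing to a one--dimensional inequality along rays, the representation $v(r,\theta)=-\int_r^\infty v_r\,dr'$, a weighted Cauchy--Schwarz estimate, and Tonelli/Fubini. The only difference is the choice of weight in Cauchy--Schwarz: the paper uses $(r'-R_1)^\kappa r'$ rather than the full target weight, so that $\int_r^\infty (r'-R_1)^{-\kappa}(r')^{-1}dr'\lesssim_\kappa r^{-\kappa}$ and the factor $R^{-\kappa}-|x|^{-\kappa}$ falls out of the final integration $\int_R^{|x|}r^{-1-\kappa}dr$, which avoids the rescaling argument you need to make your constant $K$ uniform in $R$ and $R_1$.
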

\begin{proof}
 Without loss of generality we may calculate as though $v$ is smooth. We have
 \begin{align*}
  \int_{|x|>R} \frac{|v(x)|^2}{|x|^2} dx &= \int_R^\infty \int_0^{2\pi} r^{-1} |v(r,\theta)|^2 d\theta dr\\
  & \leq \int_R^\infty \int_0^{2\pi} r^{-1} \left(\int_r^\infty |v_r(r',\theta)| dr' \right)^2 d\theta dr\\
  & \leq \int_R^\infty \int_0^{2\pi} r^{-1} \left(\int_r^\infty (r'-R_1)^\kappa r' |v_r(r',\theta)|^2 dr'\right)\left(\int_r^\infty (r'-R_1)^{-\kappa} (r')^{-1} dr'\right) d\theta dr\\
  & \lesssim_\kappa \int_R^\infty \int_0^{2\pi} r^{-1-\kappa} \left(\int_r^\infty (r'-R_1)^\kappa r' |v_r(r',\theta)|^2 dr'\right) d\theta dr\\
  & \lesssim_\kappa \int_R^\infty r^{-1-\kappa} \left(\int_{|x|>r} (|x|-R_1)^\kappa |\nabla v(x)|^2 dx\right) dr\\
  & \lesssim_\kappa \int_{|x|>R} (R^{-\kappa} - |x|^{-\kappa}) (|x|-R_1)^\kappa |\nabla v(x)|^2 dx.
 \end{align*}
\end{proof}
\begin{lemma}[Finite speed of propagation]
Let $a(r)$ be an absolutely continuous and increasing function defined on $[R,+\infty)$. If $u$ is a finite-energy solution to (CP1), then for any time $t' \in [-R,R]$ we have 
\[
 \int_{|x|>R} a(|x|) e(x,t') dx \leq \int_{|x|>R-|t'|} a(|x|+|t'|) e(x,0) dx. 
\] 
\end{lemma}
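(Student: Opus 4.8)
\emph{Proof plan.} The plan is to reduce the weighted inequality to the classical unweighted exterior energy bound via a layer--cake decomposition of the radial weight $a$. First I would record the basic \emph{finite speed of propagation / exterior energy monotonicity} estimate: for the defocusing equation (CP1) the energy density $e(x,t)$ is nonnegative, so integrating the energy--momentum conservation law $\partial_t e + \nabla\cdot(-u_t\nabla u) = 0$ over the truncated exterior cone $K = \{(x,t): 0\le t\le t',\ |x|\ge \rho - t' + t\}$ (for $t'\ge 0$) and using that the flux through its null lateral boundary has the favorable sign -- up to a positive factor it equals $\int\bigl(\frac12|u_t+u_r|^2 + \frac12|\slashed{\nabla} u|^2 + \frac{1}{p+1}|u|^{p+1}\bigr)\,d\sigma\ge 0$ -- yields, for every $\rho\ge |t'|$,
\[
 \int_{|x|>\rho} e(x,t')\,dx \;\le\; \int_{|x|>\rho-|t'|} e(x,0)\,dx .
\]
For $t'<0$ this is the same statement run backward in time. (Equivalently, this is precisely the ``finite speed of propagation of energy'' already invoked in the proof of Corollary \ref{energy dis}; for general, non-smooth finite-energy $u$ one first proves it for smooth compactly supported data and passes to the limit, exactly as in the proof of the Morawetz identity.)

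Next, since $a$ is absolutely continuous and increasing on $[R,+\infty)$, I would write $a(r) = a(R) + \int_R^r a'(s)\,ds$ with $a'\ge 0$ a.e., so that by Tonelli (all integrands nonnegative)
\[
 \int_{|x|>R} a(|x|)\,e(x,t')\,dx = a(R)\int_{|x|>R} e(x,t')\,dx + \int_R^\infty a'(s)\Bigl(\int_{|x|>s} e(x,t')\,dx\Bigr)ds .
\]
Applying the exterior energy bound to the slice $\{|x|>R\}$ and to each slice $\{|x|>s\}$ with $s\ge R\ge |t'|$ gives
\[
 \int_{|x|>R} a(|x|)\,e(x,t')\,dx \le a(R)\int_{|x|>R-|t'|} e(x,0)\,dx + \int_R^\infty a'(s)\Bigl(\int_{|x|>s-|t'|} e(x,0)\,dx\Bigr)ds .
\]

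Finally I would recognize the right-hand side as $\int_{|x|>R-|t'|} a(|x|+|t'|)\,e(x,0)\,dx$. Setting $b(r) = a(r+|t'|)$ for $r\ge R-|t'|$, one has $b(R-|t'|) = a(R)$ and $b(r) = a(R) + \int_R^{r+|t'|} a'(s)\,ds$, so by the same layer--cake identity and Tonelli,
\[
 \int_{|x|>R-|t'|} b(|x|)\,e(x,0)\,dx = a(R)\int_{|x|>R-|t'|} e(x,0)\,dx + \int_R^\infty a'(s)\Bigl(\int_{|x|>s-|t'|} e(x,0)\,dx\Bigr)ds ,
\]
where I used $\{|x|>R-|t'|\}\cap\{|x|>s-|t'|\} = \{|x|>s-|t'|\}$ for $s\ge R$. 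Comparing the last two displays closes the argument. The only genuinely nontrivial ingredient is the exterior energy monotonicity estimate (the spacetime divergence-theorem computation with the correct sign of the null flux); everything else is bookkeeping, and the one point to watch is keeping track of which ball each radial slice lives in when exchanging the order of integration, so that the shifts by $|t'|$ line up on both sides.
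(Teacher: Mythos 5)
Your proposal is correct and follows essentially the same route as the paper: the layer--cake decomposition $a(r)=a(R)+\int_R^r a'(s)\,ds$ combined with Tonelli/Fubini, the exterior energy monotonicity $\int_{|x|>s}e(x,t')\,dx\le\int_{|x|>s-|t'|}e(x,0)\,dx$ applied slice by slice, and reassembly of the shifted weight. The only difference is that you spell out the divergence-theorem justification of the exterior energy bound, which the paper simply invokes as known finite speed of propagation.
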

\begin{proof}
 This is a combination of finite speed of propagation and Fubini's theorem. In fact we have
 \begin{align*}
  \int_{|x|>R} a(|x|) e(x,t') dx & = \int_R^\infty a'(r) \left(\int_{|x|>r} e(x,t') dx\right) dr + a(R) \int_{|x|>R} e(x,t') dx\\
  & \leq \int_R^\infty a'(r) \left(\int_{|x|>r-|t'|} e(x,0) dx\right) dr + a(R) \int_{|x|>R-|t'|} e(x,0) dx\\
  & = \int_{|x|>R-|t'|} a(|x|+|t'|) e(x,0) dx. 
 \end{align*}
\end{proof}
\noindent Now we are ready to find the upper bounds of the last two inequalities in \eqref{part d inequality}. We apply the two lemmata above and obtain ($|t'|\leq t$)
\begin{align*}
 \int_{|x|>t} \frac{|u(x,t')|^2}{|x|^2} dx & \lesssim_\kappa \int_{|x|>t} (t^{-\kappa} - |x|^{-\kappa}) (|x|-|t'|)^\kappa |\nabla u(x,t')|^2 dx\\
 & \lesssim_1 \int_{|x|>t} (t^{-\kappa} - |x|^{-\kappa}) (|x|-|t'|)^\kappa e(x,t') dx \\
 & \leq \int_{|x|>t-|t'|} (t^{-\kappa}-(|x|+|t'|)^{-\kappa}) |x|^\kappa e(x,0) dx.
\end{align*}
Thus we have
\begin{align*}
 \sum_{\pm} \int_{|x|>t}  \frac{|u(x,\pm t)|^2}{|x|^2} dx \lesssim_\kappa \int_{\Rm^2} (t^{-\kappa}-(|x|+t)^{-\kappa}) |x|^\kappa e(x,0) dx;
\end{align*} 
and 
\begin{align*}
  \int_{-t}^{t} \int_{|x|>t} \frac{|u(x,t')|^2}{|x|^3} dx dt' &\leq t^{-1} \int_{-t}^{t} \int_{|x|>t} \frac{|u(x,t')|^2}{|x|^2} dx dt'\\
  & \lesssim_\kappa t^{-1} \int_{-t}^t \left(\int_{|x|>t-|t'|} (t^{-\kappa}-(|x|+|t'|)^{-\kappa}) |x|^\kappa e(x,0) dx\right) dt'\\
  & \lesssim_1 \int_{\Rm^2} (t^{-\kappa}-(|x|+t)^{-\kappa}) |x|^\kappa e(x,0) dx.
\end{align*}
Plugging these two upper bounds in \eqref{part d inequality}, we obtain
\begin{align*}
  \int_{|x|<t} \frac{t-|x|}{t} e(x,t) dx & + \int_{\Rm^2}\left(|u_r+u_t|^2 + |\slashed{\nabla} u|^2 + |u|^{p+1}\right) dx\\
 & \lesssim_{p, \kappa} \int_{\Rm^2} \left[\min\{|x|/t, 1\} + (t^{-\kappa}-(|x|+t)^{-\kappa}) |x|^\kappa\right] e(x,0) dx.
\end{align*}
Therefore
\begin{align*}
  t^\kappa \int_{|x|<t} \frac{t-|x|}{t} e(x,t) dx & + t^\kappa \int_{\Rm^2}\left(|u_r+u_t|^2 + |\slashed{\nabla} u|^2 + |u|^{p+1}\right) dx\\
 & \lesssim_{p, \kappa} \int_{\Rm^2} \left[\min\{|x|t^{\kappa-1}, t^\kappa\} + \left(1-\left(\frac{t}{t+|x|}\right)^{\kappa}\right) |x|^\kappa\right] e(x,0) dx.
\end{align*}
Finally we may complete the proof by dominated convergence theorem.

\subsection{Sub-conformal Case}

In this subsection, we consider the case $3<p<5$. In general, the argument is almost the same as in the super-conformal case. The only difference is the presence of an additional term in the upper bounds: 
\[
 \frac{5-p}{2(p+1)R} \int_{-R}^R \int_{|x|<R} |u|^{p+1} dx dt
\]
Thus we have to find a good upper bound of it first. We choose $R=t$, $r=0$ in Corollary \ref{energy dis}, follow the same argument as in the case $p\geq 5$ and obtain
\begin{align*}
 & \mu_2 \int_{\Rm^2}  (|u(x,t)|^{p+1} +|u(x,-t)|^{p+1}) dx + c_1 \sum_{\pm} \int_{|x|<t} \frac{t-|x|}{t}\left(|\nabla u(x,\pm t)|^2 + |u_t(x,\pm t)|^2 \right) dx  \\
  & + c_2 \sum_{\pm} \int_{\Rm^2} \left(|(u_r\pm u_t)(x,\pm t)|^2 + |\slashed{\nabla} u(x,\pm t)|^2 \right) dx \\
 &\quad \leq  \frac{5-p}{2(p+1)t} \int_{-t}^t \int_{|x|<t} |u(x,t')|^{p+1} dx dt' + 2\int_{\Rm^2} \min\{|x|/t, 1\} e(x,0) dx + C_{p, \mu_2} t^{-\frac{4}{p-1}}.
\end{align*}
Here we choose a positive constant $\mu_2$ slightly smaller than $\frac{1}{p+1}$. The constant $c_1,c_2>0$ are absolute constants. For convenience we introduce the notation 
\begin{align*}
 Q(t) = \mu_2 \int_{\Rm^2}  (|u(x,t)|^{p+1} +|u(x,-t)|^{p+1}) dx & + c_1 \sum_{\pm} \int_{|x|<t} \frac{t-|x|}{t}\left(|\nabla u(x,\pm t)|^2 + |u_t(x,\pm t)|^2 \right) dx\\
 & +c_2\sum_{\pm} \int_{\Rm^2} \left(|(u_r\pm u_t)(x,\pm t)|^2 + |\slashed{\nabla} u(x,\pm t)|^2 \right) dx.
\end{align*}
The inequality above implies that $Q(t)$ satisfies the recurrence formula 
\[
 Q(t) \leq \frac{\lambda}{t} \int_{0}^t Q(t') dt' + 2\int_{\Rm^2} \min\{|x|/t, 1\} e(x,0) dx + C_{p, \mu_2} t^{-\frac{4}{p-1}}.
\]
Here 
\[
 \lambda = \frac{5-p}{2(p+1)\mu_2} \approx \frac{5-p}{2}< 1.
\]
\paragraph{Proof of part (a)(b)} We may rewrite the recurrence formula as 
\[
 Q(t) \leq \frac{\lambda}{t} \int_{0}^t Q(t') dt' + o(1),
\]
We may take upper limits of both sides and obtain an inequality
\[
 \limsup_{t\rightarrow +\infty} Q(t) \leq \limsup_{t\rightarrow +\infty} \frac{\lambda}{t} \int_{0}^t Q(t') dt' \leq \lambda \limsup_{t\rightarrow +\infty} Q(t).
\]
We recall the fact $\lambda \in (0,1)$ and observe that $Q(t) \lesssim E$ is uniformly bounded, therefore we have 
\[
 \limsup_{t\rightarrow +\infty} Q(t) = 0.
\]
This verifies (a)(b).

\paragraph{Proof of part (c)} Now we assume that the initial data satisfy additional decay assumption $E_\kappa(u_0,u_1)<+\infty$ and prove the decay estimates in part (c). We start by multiplying both sides of the recurrence formula by $t^{\kappa-1}$ and integrate from $t=1$ to $t=T$, utilize our assumption on initial data, then obtain 
\begin{align*}
 \int_1^T t^{\kappa-1} Q(t) dt & \leq  \int_1^T t^{\kappa-1}\left(\frac{\lambda}{t}\int_0^t Q(t') dt'\right) dt + C_\kappa \int_{\Rm^2} \min\{|x|, |x|^\kappa\}e(x,0) dx + C_{p, \mu_2,\kappa}\\
 & \leq \frac{\lambda}{1-\kappa}\int_0^T \min\{(t')^{\kappa-1},1\} Q(t') dt' + C_\kappa \int_{\Rm^2} \min\{|x|, |x|^\kappa\}e(x,0) dx + C_{p, \mu_2,\kappa}\\
 & \leq \frac{\lambda}{1-\kappa}\int_1^T (t')^{\kappa-1} Q(t') dt' + C_\kappa \int_{\Rm^2} \min\{|x|, |x|^\kappa\}e(x,0) dx + C_\kappa E+C_{p, \mu_2,\kappa}.
\end{align*}
We may choose $\mu_2$ sufficiently close to $\frac{1}{p+1}$  so that the constant 
\[
 \frac{\lambda}{1-\kappa} = \frac{5-p}{2(p+1)\mu_2(1-\kappa)} < 1, 
\]
since we have assumed $\kappa < \frac{p-3}{2}$. Therefore we have 
\begin{align*}
  \int_1^T t^{\kappa-1} Q(t) dt \lesssim_{p,\mu_2,\kappa}  \int_{\Rm^2} \min\{|x|, |x|^\kappa\} e(x,0) dx + E + 1.
\end{align*}
Because neither the right hand side nor the implicit constant here depends on $T$, we may make $T\rightarrow +\infty$ to conclude
\begin{equation} \label{integrability of t kappa Q}
 \int_1^{\infty} t^{\kappa-1} Q(t)  dt < +\infty. 
\end{equation} 
Combining this with the fact $Q(t) \lesssim E$, we have 
\[
 \int_0^{\infty} t^{\kappa-1} Q(t)  dt < +\infty.
\]
We may multiply both sides of the recurrence formula by $t^\kappa$:
\begin{align*}
t^\kappa Q(t) \leq \lambda \int_{0}^t t^{\kappa-1} Q(t') dt' + 2\int_{\Rm^2} \min\{|x|t^{\kappa-1}, t^\kappa\} e(x,0) dx + C_{p, \mu_2} t^{\kappa-\frac{4}{p-1}}.
\end{align*}
Finally we apply dominated convergence theorem to finish the proof. 

\section{Scattering Theory of Non-radial Solutions}

In this section we prove Theorem \ref{main 3}. First of all, energy conservation law gives
 \[
  \left\|\mathbf{S}_L (-t_1) \begin{pmatrix} u(\cdot,t_1)\\ u_t(\cdot,t_1) \end{pmatrix} - \mathbf{S}_L(-t_2) \begin{pmatrix} u(\cdot,t_2)\\ u_t(\cdot,t_2)\end{pmatrix}\right\|_{\dot{H}^1 \times L^2(\Rm^2)} \lesssim_1 E^{1/2}. 
 \]
In addition, we may apply Theorem \ref{main 1} and energy conservation law to obtain 
\[
 \int_{\Rm^2} |u(x,t)|^{p+1} dx \lesssim \max\{1, |t|^{-\kappa}\}.
\]
This implies that $u \in L^q L^{p+1} (\Rm \times \Rm^2)$ for all $q > (p+1)/\kappa$. Since we have assumed $\kappa > \frac{3p+5}{4p}$, thus $u \in L^{\frac{4p(p+1)}{3p+5}} L^{p+1} (\Rm \times \Rm^2)$. As a result, we may choose $s' =  \frac{p+7}{4(p+1)} \in (\frac{1}{4}, \frac{1}{2})$, apply Strichartz estimates and obtain
\begin{align*}
 & \limsup_{t_1,t_2\rightarrow +\infty} \left\|\mathbf{S}_L (-t_1) \begin{pmatrix} u(\cdot,t_1)\\ u_t(\cdot,t_1) \end{pmatrix} - \mathbf{S}_L(-t_2) \begin{pmatrix} u(\cdot,t_2)\\ u_t(\cdot,t_2)\end{pmatrix}\right\|_{\dot{H}^{s'} \times \dot{H}^{s'-1} (\Rm^2)}\\
 = & \limsup_{t_1,t_2\rightarrow +\infty} \left\|\mathbf{S}_L (t_2-t_1) \begin{pmatrix} u(\cdot,t_1)\\ u_t(\cdot,t_1) \end{pmatrix} - \begin{pmatrix} u(\cdot,t_2)\\ u_t(\cdot,t_2)\end{pmatrix}\right\|_{\dot{H}^{s'} \times \dot{H}^{s'-1} (\Rm^2)}\\
 \lesssim & \limsup_{t_1,t_2\rightarrow +\infty} \left\|-|u|^{p-1} u\right\|_{L^{\frac{4(p+1)}{3p+5}} L^{\frac{p+1}{p}}([t_1,t_2]\times \Rm^2)}\\
 = & \limsup_{t_1,t_2\rightarrow +\infty} \|u\|_{L^{\frac{4p(p+1)}{3p+5}} L^{p+1}([t_1,t_2]\times \Rm^2)}^p =0.
\end{align*}
An interpolation between the spaces $\dot{H}^1 \times L^2$ and $\dot{H}^{s'}\times \dot{H}^{s'-1}$ then gives ($s' < 1/2<s_p <1$)
\[ 
 \limsup_{t_1,t_2\rightarrow +\infty} \left\|\mathbf{S}_L (-t_1) \begin{pmatrix} u(\cdot,t_1)\\ u_t(\cdot,t_1) \end{pmatrix} - \mathbf{S}_L(-t_2) \begin{pmatrix} u(\cdot,t_2)\\ u_t(\cdot,t_2)\end{pmatrix}\right\|_{\dot{H}^{s_p} \times \dot{H}^{s_p-1} (\Rm^2)} = 0.
\]
A similar argument shows 
\begin{equation}
 \sup_{t\in \Rm} \left\|\mathbf{S}_L (-t) \begin{pmatrix} u(\cdot,t)\\ u_t(\cdot,t) \end{pmatrix} -  \begin{pmatrix} u_0\\ u_1 \end{pmatrix}\right\|_{\dot{H}^{s_p} \times \dot{H}^{s_p-1} (\Rm^2)} < + \infty. \label{uniform bound sp}
\end{equation}
Next we utilize Sobolev embedding
\begin{align*}
 \|(u_0,u_1)\|_{\dot{H}^{s_p}\times \dot{H}^{s_p-1}} & \lesssim \|(u_0,u_1)\|_{\dot{W}^{1,\frac{2(p-1)}{p+1}}\times L^{\frac{2(p-1)}{p+1}}}\\
 & \lesssim \left\|(1+|x|)^{\kappa/2} (\nabla u_0, u_1)\right\|_{L^2} \left\|(1+|x|)^{-\kappa/2}\right\|_{L^{p-1}}\\
 & \lesssim E_\kappa (u_0,u_1)^{1/2} \left(\int_{\Rm^2}(1+|x|)^{-\frac{(p-1)\kappa}{2}} dx\right)^{1/(p-1)} < +\infty.
\end{align*}
Here $\frac{(p-1)\kappa}{2} > \frac{(p-1)(3p+5)}{8p} > 2$. Combining this with \eqref{uniform bound sp}, we obtain $(u(\cdot,t), u_t(\cdot,t)) \in \dot{H}^{s_p}\times \dot{H}^{s_p-1}$ for all time $t\in \Rm$. By completeness of the space $\dot{H}^{s_p}\times \dot{H}^{s_p-1}$ we conclude that there exists $(u_0^+, u_1^+) \in \dot{H}^{s_p}\times \dot{H}^{s_p-1}$, so that 
\[
 \limsup_{t\rightarrow +\infty} \left\|\mathbf{S}_L (-t) \begin{pmatrix} u(\cdot,t)\\ u_t(\cdot,t) \end{pmatrix} - \begin{pmatrix} u_0^+\\ u_1^+ \end{pmatrix} \right\|_{\dot{H}^{s_p} \times \dot{H}^{s_p-1} (\Rm^2)} = 0.
\]
Thus we obtain the scattering of solutions in the critical Sobolev space
\[
 \limsup_{t\rightarrow +\infty} \left\|\begin{pmatrix} u(\cdot,t)\\ u_t(\cdot,t) \end{pmatrix} - \mathbf{S}_L (t) \begin{pmatrix} u_0^+\\ u_1^+ \end{pmatrix} \right\|_{\dot{H}^{s_p} \times \dot{H}^{s_p-1} (\Rm^2)} = 0.
\]
By scattering criterion (Proposition \ref{scattering criterion}) we have $u \in L^{\frac{3}{2}(p-1)} L^{\frac{3}{2}(p-1)} (\Rm^+ \times \Rm^2)$. Next we show that the scattering also happens in the energy space. We start by applying Strichartz estimates and a fractional chain rule (Lemma \ref{chain rule}) to obtain 
\begin{align}
 \|D^{1/2} u\|_{L^6 L^6 ([t_1,t_2]\times \Rm^2)} & \leq C \left\|(u(\cdot,t_1), u_t(\cdot,t_1))\right\|_{\dot{H}^1 \times L^2} + C_1 \|D^{1/2} (-|u|^{p-1}u)\|_{L^{6/5} L^{6/5} ([t_1,t_2]\times \Rm^2)} \nonumber\\
 & \leq 2^{1/2} C E^{1/2} + C_2 \|u\|_{L^{\frac{3}{2}(p-1)} L^{\frac{3}{2}(p-1)} ([t_1,t_2]\times \Rm^2)}^{p-1} \|D^{1/2} u\|_{L^6 L^6([t_1,t_2]\times \Rm^2)} \label{Dahalf}
\end{align}
Please note that $\|D^{1/2} u\|_{L^6 L^6([t_1,t_2]\times \Rm^2)} < +\infty$ for all $t_2>t_1\geq 0$. Because
\begin{align*}
 \|D^{1/2} u\|_{L^6 L^6([t_1,t_2]\times \Rm^2)} & \leq C\left\|(u(\cdot,t_1), u_t(\cdot,t_1))\right\|_{\dot{H}^1 \times L^2} + \|-|u|^{p-1} u\|_{L^1 L^2([t_1,t_2]\times \Rm^2)}\\
 & \leq 2^{1/2} C E^{1/2} + (t_2-t_1) \|-|u|^{p-1} u\|_{L^\infty L^2([t_1,t_2]\times \Rm^2)}\\
 & \leq 2^{1/2} C E^{1/2} + (t_2-t_1) \|u\|_{L^\infty L^{2p} ([t_1,t_2]\times \Rm^2)}^p < +\infty
\end{align*}
Here we apply Strichartz estimates and use the embedding $\dot{H}^{1}\cap L^{p+1} (\Rm^2)\hookrightarrow L^{2p}(\Rm^2)$. Since $u \in L^{\frac{3}{2}(p-1)} L^{\frac{3}{2}(p-1)} (\Rm^+ \times \Rm^2)$, we may find a large time $t_1$ so that
\[
 C_2 \|u\|_{L^{\frac{3}{2}(p-1)} L^{\frac{3}{2}(p-1)} ([t_1,+\infty)\times \Rm^2)}^{p-1} < 1/2.
\]
A combination of this inequality with \eqref{Dahalf} immediately gives
\[
 \|D^{1/2} u\|_{L^6 L^6 ([t_1,t_2]\times \Rm^2)} \leq 2^{3/2} C E^{1/2},
\]
for all $t_2 > t_1$. Therefore we have
\[
 \|D^{1/2} u\|_{L^6 L^6 ([t_1,+\infty)\times \Rm^2)} < + \infty \quad \Rightarrow \quad \|D^{1/2} (-|u|^{p-1}u)\|_{L^{6/5} L^{6/5} ([t_1,+\infty)\times \Rm^2)}<+\infty.
\]
We then apply Strichartz estimates and obtain
\[ 
 \limsup_{t_1,t_2\rightarrow +\infty} \left\|\mathbf{S}_L (-t_1) \begin{pmatrix} u(\cdot,t_1)\\ u_t(\cdot,t_1) \end{pmatrix} - \mathbf{S}_L(-t_2) \begin{pmatrix} u(\cdot,t_2)\\ u_t(\cdot,t_2)\end{pmatrix}\right\|_{\dot{H}^{1} \times L^2 (\Rm^2)} = 0.
\]
Therefore the convergence 
\[
 \mathbf{S}_L (-t) \begin{pmatrix} u(\cdot,t)\\ u_t(\cdot,t) \end{pmatrix}  \rightarrow \begin{pmatrix} u_0^+\\ u_1^+ \end{pmatrix}
\]
happens not only in the space $\dot{H}^{s_p}\times \dot{H}^{s_p-1}$ but also in the space $\dot{H}^1 \times L^2$. Finally an interpolation between these two spaces implies the scattering of solutions in all spaces $\dot{H}^s \times \dot{H}^{s-1}(\Rm^2)$ with $s\in [s_p,1]$.

\section{Scattering Theory of Radial Solutions}

In this section we prove scattering theory of radial finite-energy solutions to (CP1). The general idea is the same as in higher dimensions, see 
\cite{shenenergy}. We combine a method of characteristic lines with energy distribution properties of solutions. 

\subsection{Method of Characteristic Lines}

\paragraph{Reduction to one-dimensional case} Let $u$ be a radial solution to (CP1) with a finite energy $E$. Please note that we will work as though the solutions are sufficiently smooth, otherwise we may apply standard smooth approximation techniques. We start by defining
 \begin{align*}
  &w(r,t) = r^{1/2} u(r,t);& &&\\
  &v_+(r,t) = w_t(r,t) - w_r(r,t);& &v_-(r,t) = w_t(r,t) + w_r(r,t).&
 \end{align*}
The function $w(r,t)$ satisfies a one-dimensional wave equation 
\[
 w_{tt} - w_{rr} = f(r,t) \doteq + \frac{1}{4} r^{-3/2} u - r^{1/2} |u|^{p-1} u. 
\] 
A simple calculation shows that $v_+(\cdot,t), v_-(\cdot,t) \in L_{loc}^2(\Rm^+)$ satisfy
\begin{align*}
 (\partial_t \pm \partial_r) v_\pm (r,t) = w_{tt} - w_{rr} = f(r,t).
\end{align*}
This immediately gives variation of $v_\pm$ along characteristic lines $t \pm r = \hbox{Const}$. 
\begin{align*}
  &v_+(t_2-\eta, t_2) - v_+(t_1-\eta, t_1)  = \int_{t_1}^{t_2} f(t-\eta,t) dt,& &t_2>t_1>\eta;& \\
  &v_-(s-t_2,t_2) - v_-(s-t_1,t_1)  = \int_{t_1}^{t_2} f(s-t,t) dt,& &t_1<t_2<s.&
\end{align*} 
Next we give the upper bounds of the integrals of $f$ above. According to Lemma \ref{pointwise estimate}, we have 
\[
 \int_{t_1}^{t_2} |(t-\eta)^{-3/2} u(t-\eta,t)| dt \lesssim_E \int_{t_1}^{t_2} (t-\eta)^{-\frac{3}{2}-\frac{2}{p+3}} dt \lesssim_E (t_1-\eta)^{-\frac{1}{2}-\frac{2}{p+3}}. 
\]
In addition, we may utilize energy flux formula and obtain 
\begin{align*}
 \left|\int_{t_1}^{t_2} (t-\eta)^{1/2} |u|^{p-1}u(t-\eta,t) dt\right| &\leq \left(\int_{t_1}^{t_2} (t-\eta) |u(t-\eta,t)|^{p+1} dt\right)^{\frac{p}{p+1}}\left(\int_{t_1}^{t_2} (t-\eta)^{-\frac{p-1}{2}}dt\right)^\frac{1}{p+1}\\
 & \lesssim_{p,E}  (t_1-\eta)^{-\frac{p-3}{2(p+1)}}. 
\end{align*}
Combining these two upper bounds together we have 
\[
 \left|\int_{t_1}^{t_2} f(t-\eta,t) dt \right| \lesssim_{p,E} (t_1-\eta)^{-\frac{1}{2}-\frac{2}{p+3}} + (t_1-\eta)^{-\frac{p-3}{2(p+1)}}. 
\]
Thus there exists a function $g_+(\eta) \in L_{loc}^2 (\Rm)$, so that $v_+(t-\eta,t)$ converges to $g_+(\eta)$ in $L_{loc}^2(\Rm)$ as $t\rightarrow +\infty$.  In fact we have 
\begin{equation*}
 \int_{\eta_1}^{\eta_2} \left|v_+(t-\eta,t) - g_+(\eta)\right|^2 d\eta \lesssim_{p,E} (\eta_2-\eta_1) (t-\eta_2)^{-\frac{p-3}{p+1}}, \quad t>\eta_2+1.
\end{equation*}
We apply change of variables $\eta = t-r$ and rewrite this inequality in the form 
\begin{equation} \label{variation vplus}
 \int_{r_1}^{r_2} \left|v_+(r,t) - g_+(t-r)\right|^2 dr \lesssim_{p,E} (r_2-r_1) r_1^{-\frac{p-3}{p+1}}, \quad r_1>1.
\end{equation}
A basic calculation shows that 
\[
 v_+(r,t) = r^{1/2}(u_t(r,t)-u_r(r,t))-(1/2)r^{-1/2} u(r,t).
\] 
We may apply Lemma \ref{pointwise estimate} and obtain 
\[
 \int_{r_1}^{r_2} |r^{-1/2} u(r,t)|^2 dr \lesssim_{p,E} (r_2-r_1) r_1^{-1-\frac{4}{p+3}}, \quad r_1>1.
\]
A combination of this inequality with \eqref{variation vplus} gives 
\[
  \int_{r_1}^{r_2} \left|r^{1/2}(u_t(r,t)-u_r(r,t)) - g_+(t-r)\right|^2 dr \lesssim_{p,E} (r_2-r_1) r_1^{-\frac{p-3}{p+1}}, \quad r_1>1.
\]
Next we show $g_+ \in L^2(\Rm)$. Given any $\eta_1<\eta_2$, we may utilize the inequality above with large time $t$, $r_1 = t - \eta_2$, $r_2 = t - \eta_1$ and consider the limit $t\rightarrow +\infty$:
\begin{align*}
 \int_{\eta_1}^{\eta_2} |g_+(\eta)|^2 d\eta = \lim_{t\rightarrow +\infty} \int_{t-\eta_2}^{t-\eta_1} r \left|u_t(r,t)-u_r(r,t)\right|^2 dr \lesssim_1 E.  
\end{align*}
This implies $g_+ \in L^2(\Rm)$ since $\eta_1<\eta_2$ are arbitrary constants. In summary we have 
\begin{lemma} \label{L2 estimate}
 If $u$ is a radial solution to (CP1) with a finite energy $E$, then there exist two functions $g_+, g_- \in L^2(\Rm)$ so that 
\begin{align*}
  \int_{r_1}^{r_2} \left|r^{1/2}(u_t(r,t)-u_r(r,t)) - g_+(t-r)\right|^2 dr & \lesssim_{p,E} (r_2-r_1) r_1^{-\frac{p-3}{p+1}}, \quad r_1>1;\\
  \int_{r_1}^{r_2} \left|r^{1/2}(u_t(r,t)+u_r(r,t)) - g_-(t+r)\right|^2 dr & \lesssim_{p,E} (r_2-r_1) r_1^{-\frac{p-3}{p+1}}, \quad r_1>1.
\end{align*}
\end{lemma}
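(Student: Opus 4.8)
The plan is to pass to the one-dimensional reduction, follow the characteristic (Riemann) variables along null lines, and control the inhomogeneous term using the pointwise bound of Lemma \ref{pointwise estimate} together with the energy flux identity for (CP1). First I would set $w(r,t) = r^{1/2}u(r,t)$. Since $\Delta u = u_{rr} + r^{-1}u_r$ for radial $u$, a direct computation shows that $w$ solves the one-dimensional equation
\[
 w_{tt} - w_{rr} = f(r,t), \qquad f(r,t) = \tfrac14 r^{-3/2}u - r^{1/2}|u|^{p-1}u,
\]
so the characteristic variables $v_\pm = w_t \mp w_r$ obey the transport equations $(\partial_t \pm \partial_r)v_\pm = f$. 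Integrating $v_+$ along the outgoing null line $t - r = \eta$ gives, for $t_2 > t_1 > \eta$,
\[
 v_+(t_2-\eta,t_2) - v_+(t_1-\eta,t_1) = \int_{t_1}^{t_2} f(t-\eta,t)\,dt,
\]
and the analogous identity holds for $v_-$ along $t+r = \text{const}$.

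The heart of the matter is to bound $\int_{t_1}^{t_2} f(t-\eta,t)\,dt$. For the lower-order piece, Lemma \ref{pointwise estimate} applied at each fixed time (with $\|u(\cdot,t)\|_{\dot H^1}$ and $\|u(\cdot,t)\|_{L^{p+1}}$ controlled by $E$ via energy conservation) gives $|u(r,t)| \lesssim_E r^{-2/(p+3)}$, hence
\[
 \int_{t_1}^{t_2} (t-\eta)^{-3/2}\,|u(t-\eta,t)|\,dt \lesssim_E \int_{t_1}^{t_2}(t-\eta)^{-3/2-2/(p+3)}\,dt \lesssim_E (t_1-\eta)^{-1/2-2/(p+3)}.
\]
For the nonlinear piece I would invoke the energy flux identity through the outgoing null cone $\{r = t-\eta\}$, which yields $\int_{t_1}^{t_2}(t-\eta)|u(t-\eta,t)|^{p+1}\,dt \lesssim E$, and then Hölder's inequality with exponents $\frac{p+1}{p}$ and $p+1$: the leftover power of $t-\eta$ is $-\frac{p-1}{2}$, which is integrable precisely because $p>3$, so that $\big|\int_{t_1}^{t_2}(t-\eta)^{1/2}|u|^{p-1}u(t-\eta,t)\,dt\big| \lesssim_{p,E}(t_1-\eta)^{-\frac{p-3}{2(p+1)}}$. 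Altogether $\big|\int_{t_1}^{t_2} f(t-\eta,t)\,dt\big| \lesssim_{p,E} (t_1-\eta)^{-\frac{p-3}{2(p+1)}}$ (the first term being lower order), and this is summable in $t_1$.

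Consequently the Cauchy criterion produces a limit $g_+(\eta) = \lim_{t\to\infty} v_+(t-\eta,t)$ in $L^2_{loc}(\Rm)$ with the quantitative rate $\int_{\eta_1}^{\eta_2}|v_+(t-\eta,t)-g_+(\eta)|^2 d\eta \lesssim_{p,E}(\eta_2-\eta_1)(t-\eta_2)^{-\frac{p-3}{p+1}}$; substituting $\eta = t-r$ turns this into the claimed bound for $v_+$ (the threshold $r_1>1$ just reflects that the decay integrals are used away from the origin). To replace $v_+ = r^{1/2}(u_t-u_r) - \tfrac12 r^{-1/2}u$ by the physical quantity $r^{1/2}(u_t-u_r)$, I would absorb the term $r^{-1/2}u$ using $\int_{r_1}^{r_2}|r^{-1/2}u(r,t)|^2 dr \lesssim_{p,E}(r_2-r_1)r_1^{-1-4/(p+3)}$, again from Lemma \ref{pointwise estimate}. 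Finally, to upgrade $g_+$ from $L^2_{loc}$ to $L^2(\Rm)$, I would test on an arbitrary window and pass to the limit: $\int_{\eta_1}^{\eta_2}|g_+(\eta)|^2 d\eta = \lim_{t\to\infty}\int_{t-\eta_2}^{t-\eta_1} r\,|u_t-u_r|^2(r,t)\,dr \le 2E$, a bound independent of $\eta_1,\eta_2$. The estimate for $g_-$ follows by the mirror-image argument, since the time reversal $t\mapsto -t$ exchanges $v_+$ with $v_-$ and the two null families. The step I expect to be the main obstacle is the nonlinear flux estimate: one needs the correct form of the energy flux identity for (CP1) so that $\int(t-\eta)|u|^{p+1}dt$ is genuinely controlled by $E$, after which the decay exponent $-\frac{p-3}{2(p+1)}$ is usable only because the standing hypothesis $p>3$ makes it negative; everything else is bookkeeping with Lemma \ref{pointwise estimate} and Hölder.
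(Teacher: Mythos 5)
Your proposal follows essentially the same route as the paper: the reduction $w=r^{1/2}u$, the characteristic variables $v_\pm$, the two bounds on $\int f$ via Lemma \ref{pointwise estimate} and the energy flux identity with the same H\"older exponents, the $L^2_{loc}$ Cauchy argument with the rate $(t-\eta_2)^{-\frac{p-3}{p+1}}$, the absorption of the $r^{-1/2}u$ correction, and the upgrade of $g_\pm$ to $L^2(\Rm)$ by passing to the limit on arbitrary windows. The argument is correct as proposed.
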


\subsection{Exterior Scattering} \label{sec: exterior scattering}
Given any $\eta \in \Rm$ and large positive number $R$, we may apply Lemma \ref{L2 estimate} with $r_1 = t - \eta$, $r_2 = t+R$ and obtain 
\[
 \int_{t-\eta}^{t+R} \left|r^{1/2}(u_t(r,t)-u_r(r,t)) - g_+(t-r)\right|^2 dr \lesssim_{p,E} (R+\eta) (t-\eta)^{-\frac{p-3}{p+1}}, \quad t\gg 1.
\]
We may let $t\rightarrow +\infty$:
\begin{equation} \label{asymptotic behaviour 1}
 \lim_{t\rightarrow +\infty}\int_{t-\eta}^{t+R} \left|r^{1/2}(u_t(r,t)-u_r(r,t)) - g_+(t-r)\right|^2 dr = 0.
\end{equation}
Similarly we have 
\[
 \lim_{t\rightarrow +\infty}\int_{t-\eta}^{t+R} \left|r^{1/2}(u_t(r,t)+u_r(r,t)) - g_-(t+r)\right|^2 dr = 0.
\]
Since $g_- \in L^2(\Rm)$, we may discard $g_-(t+r)$ above and write 
\[
 \lim_{t\rightarrow +\infty}\int_{t-\eta}^{t+R} \left|r^{1/2}(u_t(r,t)+u_r(r,t)) \right|^2 dr = 0.
\]
We combine this with \eqref{asymptotic behaviour 1} and obtain 
\begin{equation}\label{asymptotic behaviour 2}
 \lim_{t\rightarrow +\infty} \int_{t-\eta}^{t+R} \left(\left|r^{1/2} u_t(r,t)-\frac{1}{2}g_+(t-r)\right|^2 + \left|r^{1/2}u_r(r,t)+\frac{1}{2}g_+(t-r)\right|^2\right) dr = 0
\end{equation}
We then observe 
\begin{itemize}
 \item Finite speed of propagation of energy implies 
 \[
  \lim_{R\rightarrow +\infty} \sup_{t\geq 0} \int_{t+R}^\infty r\left(|u_t(r,t)|^2 + |u_r(r,t)|^2\right) dr \leq  \lim_{R\rightarrow +\infty} \int_{R}^\infty r(|u_1(r)|^2 + |\partial_r u_0(r)|^2)dr = 0.
 \]
 \item Since $g_+ \in L^2(\Rm)$, we have 
 \[
  \lim_{R\rightarrow +\infty} \int_{t+R}^{\infty} |g_+(t-r)|^2 dr = \lim_{R\rightarrow +\infty} \int_{-\infty}^{-R} |g_+(\eta)|^2 d \eta = 0.
 \]
\end{itemize}
Combining these two limits with \eqref{asymptotic behaviour 2}, we obtain 
\begin{equation}\label{asymptotic behaviour 3}
 \lim_{t\rightarrow +\infty} \int_{t-\eta}^{+\infty} \left(\left|r^{1/2} u_t(r,t)-\frac{1}{2}g_+(t-r)\right|^2 + \left|r^{1/2}u_r(r,t)+\frac{1}{2}g_+(t-r)\right|^2\right) dr = 0
\end{equation}
By radiation fields, there exists a radial free wave $\tilde{u}^+$, so that 
\[
 \lim_{t\rightarrow +\infty} \int_0^{+\infty} \left(\left|r^{1/2} \tilde{u}_t^+ (r,t)-\frac{1}{2}g_+(t-r)\right|^2 + \left|r^{1/2} \tilde{u}_r^+ (r,t)+\frac{1}{2}g_+(t-r)\right|^2\right) dr = 0.
\]
Therefore we have
\[
 \lim_{t\rightarrow +\infty} \int_{t-\eta}^{+\infty} r\left(\left|u_t(r,t) - \tilde{u}_t^+ (r,t)\right|^2 + \left|u_r(r,t) - \tilde{u}_r^+ (r,t)\right|^2\right) dr = 0.
\]
This proves the exterior scattering of solutions, i.e. scattering outside any forward light cone. 

\subsection{Scattering by energy decay}

In this subsection we prove Part (b) of Theorem \ref{main 2}. Let $t>0$ be sufficiently large and $c\in (0,1)$ be a constant. We may choose $r_1=t-c \cdot t^{\frac{p-3}{p+1}}$ and $r_2 = t$ in Lemma \ref{L2 estimate} and obtain 
\begin{align*}
  \limsup_{t\rightarrow +\infty} \int_{t-c\cdot t^{\frac{p-3}{p+1}}}^t \left|r^{1/2}(u_t(r,t)-u_r(r,t)) - g_+(t-r)\right|^2 dr & \lesssim_{p,E} c;\\
  \limsup_{t\rightarrow +\infty} \int_{t-c\cdot t^{\frac{p-3}{p+1}}}^t \left|r^{1/2}(u_t(r,t)+u_r(r,t)) - g_-(t+r)\right|^2 dr & \lesssim_{p,E} c.
\end{align*}
Following the same argument as in last subsection, we have  
\[
 \limsup_{t\rightarrow +\infty} \int_{t-c\cdot t^{\frac{p-3}{p+1}}}^{t} r\left(\left|u_t(r,t) - \tilde{u}_t^+ (r,t)\right|^2 + \left|u_r(r,t) - \tilde{u}_r^+ (r,t)\right|^2\right) dr \lesssim_{p,E} c.
\]
We may combine this with the regular exterior scattering and obtain 
\begin{equation} \label{stronger exterior scattering}
 \limsup_{t\rightarrow +\infty} \int_{t-c\cdot t^{\frac{p-3}{p+1}}}^{+\infty} r\left(\left|u_t(r,t) - \tilde{u}_t^+ (r,t)\right|^2 + \left|u_r(r,t) - \tilde{u}_r^+ (r,t)\right|^2\right) dr \lesssim_{p,E} c.
\end{equation}
Next we consider the region $\{x: |x|<t-c\cdot t^{\frac{p-3}{p+1}}\}$. We utilize the conclusion of Theorem \ref{main 2}, part (c) and obtain
\[
 \lim_{t\rightarrow +\infty} \int_{|x|<t-c\cdot t^{\frac{p-3}{p+1}}} e(x,t) dx \lesssim_c \lim_{t\rightarrow +\infty} t^{\frac{4}{p+1}} \int_{|x|<t} \frac{t-|x|}{t}e(x,t) dx = 0.
\]
Please note that in the sub-conformal range our assumption $p>1+2\sqrt{3}$ guarantees that $\frac{4}{p+1} < \frac{p-3}{2}$. By Lemma \ref{hollow center}, we also have 
\[
 \lim_{t\rightarrow +\infty} \int_{|x|<t-c\cdot t^{\frac{p-3}{p+1}}} \left(|\nabla \tilde{u}^+(x,t)|^2 + |\tilde{u}_t^+(x,t)|^2\right) dx = 0.
\]
Combining these two limits we obtain 
\begin{align*}
  \lim_{t\rightarrow + \infty} \int_{|x|<t-c\cdot t^{\frac{p-3}{p+1}}} \left(|\nabla \tilde{u}^\pm (x,t)-\nabla u(x,t)|^2+|\tilde{u}_t^\pm (x,t)-u_t(x,t)|^2\right) dx = 0.
\end{align*}
We may combine this with stronger exterior scattering \eqref{stronger exterior scattering} to conclude
\[
 \limsup_{t\rightarrow + \infty} \int_{\Rm^2} \left(|\nabla \tilde{u}^\pm (x,t)-\nabla u(x,t)|^2+|\tilde{u}_t^\pm (x,t)-u_t(x,t)|^2\right) dx. \lesssim_{p,E} c.
\]
Finally we make $c\rightarrow 0^+$ and finish the proof. 

\section{Appendix} 
In this appendix we give a brief proof of a Morawetz estimate for solutions to 2D wave equation. This kind of Morawetz estimates were first introduced by Nakanishi. For convenience we use the same notation as in Nakanishi \cite{morawetzNLKG}:
\begin{align*}
 &\lambda = \sqrt{t^2+r^2};& &\Theta = \frac{(-t,x)}{\lambda};& &g = \frac{d-1}{2\lambda} + \frac{t^2-r^2}{2\lambda^3};& \\
 &m_h = \Theta \cdot (u_t, \nabla u) + ug;& &l(u) = \frac{|\nabla u|^2}{2} - \frac{|u_t|^2}{2} + \frac{|u|^{p+1}}{p+1};& &\Box = \partial_t^2 - \Delta;&
\end{align*}
and $(\partial_0, \partial_1, \cdots, \partial_d) = (-\partial^0, \partial^1, \cdots, \partial^d) = (\partial_t, \nabla)$. Then we have an identity
\begin{align*}
 (\Box u + |u|^{p-1} u)m_h = &\sum_{\alpha=0}^d \partial_\alpha \left(-m_h \partial^\alpha u + l(u) \Theta_\alpha + \frac{|u|^2}{2} \partial^\alpha g\right) \\
 & \quad + \frac{|\slashed{\nabla} u|^2}{\lambda} + \frac{|r u_t+t u_r|^2}{\lambda^3} + \frac{p-1}{p+1} |u|^{p+1} g + \frac{|u|^2}{2} \Box g.
\end{align*}
A basic calculation shows that
\[
 \Box g = \frac{(d-3)(d+3)}{2\lambda^3} + 3(d-1)\frac{t^2-r^2}{\lambda^5}+15\frac{(t^2-r^2)^2}{2\lambda^7}.
\]
Thus we may integrate in the region $\Rm^2 \times [1,T]$ and obtain
\begin{align}
 &\left. \int_{\Rm^2} \left(-m_h u_t+\frac{t}{\lambda} l(u) + \frac{|u|^2}{2} g_t\right) dx \right|_{t=1}^{t=T} \nonumber \\
 & \qquad = \int_{1}^{T} \int_{\Rm^2} \left(\frac{|\slashed{\nabla} u|^2}{\lambda} + \frac{|r u_t+t u_r|^2}{\lambda^3} + \frac{p-1}{p+1} \cdot \frac{t^2 |u|^{p+1}}{\lambda^3} + \frac{|u|^2}{2} \Box g\right) dx dt. \label{second Morawetz}
\end{align}
In order to deal with the terms involving $|u|^2$, we need to apply H\"{o}lder's inequality
\begin{align*}
 \int_{\Rm^2} \frac{|u(x,t)|^2}{\lambda^2} dx \leq \left(\int_{\Rm^2} |u|^{p+1} dx\right)^{\frac{2}{p+1}} \left(\int_{\Rm^2} \lambda^{-\frac{2(p+1)}{p-1}} dx\right)^{\frac{p-1}{p+1}} \lesssim_p t^{-\frac{4}{p+1}} E^{\frac{2}{p+1}}.
\end{align*}
Next we observe the facts $|g_t| \lesssim \lambda^{-2}$, $|\Box g| \lesssim \lambda^{-3}$ and obtain
\begin{align*}
 \left|\int_{\Rm^2} \left(-m_h u_t+\frac{t}{\lambda} l(u) + \frac{|u|^2}{2} g_t\right) dx\right| & \lesssim_p \int_{\Rm^2} \left(|\nabla u|^2 + |u_t|^2 + |u|^{p+1} + \frac{|u|^2}{\lambda^2} \right) dx\\
 & \lesssim_p E + E^{\frac{2}{p+1}},
\end{align*}
and 
\[
 \int_{1}^T \int_{\Rm^2} \frac{|u|^2}{2} |\Box g| dx dt \leq \int_1^T \int_{\Rm^2} \frac{|u|^2}{t\lambda^2} dx dt \lesssim_p E^{\frac{2}{p+1}} \int_1^T t^{-1-\frac{4}{p+1}} dt \lesssim_p E^{\frac{2}{p+1}}.
\]
We plug these upper bound in \eqref{second Morawetz} and conclude
\[
 \int_{1}^{T} \int_{\Rm^2} \left(\frac{|\slashed{\nabla} u|^2}{\lambda} + \frac{|r u_t+t u_r|^2}{\lambda^3} + \frac{t^2 |u|^{p+1}}{\lambda^3}\right)  dx dt \lesssim_p E + E^{\frac{2}{p+1}}.
\]

\section*{Acknowledgement}
The second author is financially supported by National Natural Science Foundation of China Projects 12071339, 11771325.

\end{document}